\newtheorem*{theorema}{Theorem A}
\newtheorem*{theoremb}{Theorem B}
\newtheorem*{theoremc}{Theorem C}
\newtheorem*{thmA1}{Theorem A.1}
\newtheorem*{thmB1}{Theorem B.1}
\newtheorem*{lemB2}{Lemma B.2}
\newtheorem*{lemB3}{Lemma B.3}
\newtheorem{prop}{Proposition}[section]
\newtheorem{lemma}[prop]{Lemma}
\newtheorem{thm}[prop]{Theorem}
\newtheorem{cor}[prop]{Corollary}
\newtheorem*{cor1}{Corollary}
\theoremstyle{definition}
\theoremstyle{remark}
\newtheorem{remark}[prop]{Remark}
\numberwithin{equation}{section}
\begin{document}

\thanks{
The first-named author was supported by 
 the Grant-in-Aid for Scientific Research (C) of the JSPS, 16K05179.
 The second-named author was supported by
the Grant-in-Aid for Young Scientists (A) of the JSPS, 15H05435,
 the Grant-in-Aid for Scientific Research (B) of the JSPS, 16KT0021.
% and the JSPS Core-to-Core Program ``Foundation
%of a Global Research Cooperative Center in Mathematics focused on Number Theory and Geometry''.
We thank Juan Rivera-Letelier and Mike Todd for fruitful discussions.
}

% \title[Large deviation principle for $S$-unimodal maps]{Large deviation principle for $S$-unimodal maps}
\title[LDP for $S$-unimodal maps with flat critical point]{Large deviation principle
 for $S$-unimodal maps with flat critical point}

%    Only \author and \address are required; other information is
%    optional.  Remove any unused author tags.

%    author one information
% \author[short version for running head]{name for top of paper}
\author{Yong Moo Chung}
\address{Department of Applied Mathematics,
Hiroshima University, Higashi-Hiroshima,
739-8527, JAPAN}
\curraddr{}
\email{}
\thanks{}

%    author two information
\author{Hiroki Takahasi}
\address{Department of Mathematics,
Keio University, Yokohama,
223-8522, JAPAN}
\curraddr{}
\email{hiroki@math.keio.ac.jp}
\thanks{}

%    \subjclass is required.
\subjclass[2010]{Primary 37A50, 37C40, 37D25, 37D45, 37E05}

\date{today}

%\dedicatory{Dedicated to Yoichiro Takahashi on the occasion of his 70s birthday}

%    Abstract is required.
\begin{abstract}
We study a topologically exact, negative Schwarzian unimodal map  
whose critical point is non-recurrent and flat.
Assuming the critical order is 
either logarithmic or polynomial,
we establish the Large Deviation Principle
%A key ingredient is an inducing scheme equipped with a specification-like property.
and give a partial description of the zeros of the corresponding rate functions.
We apply our main results to a certain parametrized family of unimodal maps in the same topological conjugacy class, and give a complete description
of the zeros of the rate functions. We observe a qualitative change at a transition parameter,
and show that the sets of zeros depend continuously on the parameter
even at the transition.
%and a qualitative change of the structures of the zeros under the change of parameters.
%We display an example of a parametrized family of such unimodal maps in which all zeros of the rate functions
%coalesce into a single zero at a transition parameter. %as the critical point becomes flatter.

\end{abstract}

%whose critical point is non-recurrent and flat.
%Assuming the critical order is 
%either logarithmic or polynomial,
%we establish the Large Deviation Principle
%A key ingredient is an inducing scheme equipped with a specification-like property.
%and give a partial characterization of the zeros of the corresponding rate functions.
%We apply our main results to a certain parametrized family of unimodal maps in the same topological conjugacy class, and give a complete characterization 
%of the zeros of the rate functions. We observe a qualitative change at a transition parameter,
%and show that the set of zeros depend continuously on parameter
%even at the transition.
%and a qualitative change of the structures of the zeros under the change of parameters.
%We display an example of a parametrized family of such unimodal maps in which all zeros of the rate functions
%coalesce into a single zero at a transition parameter. %as the critical point becomes flatter.

\maketitle

%\begin{prop}
%Assume $f$ is topologically mixing.
%There are primes $\ell_1$, $\ell_2$ and $J_1,J_2\in\mathcal W$ such
%that $\ell_i=R(J_i)$ $(i=1,2)$.
%\end{prop}

%\begin{proof}
%Since $f$ is topologically mixing, there exists $n_0\geq1$ such that for every integer $n\geq n_0$,
%$I\cap f^{n}(I)\neq\emptyset$ holds. Since $I$ is a nice interval, $I\subset f^n(I)$.
%Let $\ell_1,\ell_2\geq n_0$ be primes. Then there exist subintervals $K_1$, $K_2$
%of $I$ such that $f^{\ell_i}(K_i)=I$ $(i=1,2)$ homeomo. Hence, there exists $p_i\in K_i$ such that
%$f^{\ell_i}(p_i)=p_i$. Let $J_i\in\mathcal W$ be such that $p_i\in J_i$.
%Then $\ell_i\geq R(p_i)$ holds.
%Since $\ell_i$ is a prime, If the orbit of $p_i$ intersects $I$ only at $p_i$
%then $\ell_i=R(p_i)$.
%If $\ell_i>R(p_i)$, then

%\end{proof}
\section{introduction}

Consider a dynamical system $f\colon X\to X$ of a compact topological space $X$.
The theory of large deviations deals with the behavior of the empirical mean
$$\delta_x^n=\frac{1}{n}\left(\delta_x+\delta_{f(x)}+\cdots+\delta_{f^{n-1}(x)}\right)\quad \text{as $n\to\infty$},$$
 where $\delta_{x}$ denotes the Dirac measure at $x$.
We put a Lebesgue measure $|\cdot |$ on $X$ as a reference measure, and ask the asymptotic behavior of the empirical mean 
for Lebesgue almost every initial condition.
For general accounts on the theory of large deviations, 
see for example Ellis \cite{Ell85},
 Dembo and Zeitouni \cite{DemZei98},
  Rassoul-Agha and Sepp\"al\"ainen \cite{RasSep}.

Let $\mathcal M$ denote the space of Borel probability measures on $X$
endowed with the topology of weak convergence.
We say {\it the Large Deviation Principle} (the LDP) holds if
there exists a lower semi-continuous function 
 $\mathscr{I}=\mathscr{I}(f;\, \cdot)\colon\mathcal M\to[0,\infty]$ 
 which satisfies the following:

\begin{itemize}
\item [-](lower bound) for every open subset $\mathcal G$ of $\mathcal M$,
\begin{equation}\label{low}
\liminf_{n\to\infty}\frac{1}{n} \log \left|\left\{x\in X\colon\delta_x^n\in\mathcal G\right\}\right|\geq-\inf_{\mu\in\mathcal G}\mathscr{I}(\mu);\end{equation} 

\item[-] (upper bound) for every closed subset $\mathcal K$ of $\mathcal M$,
\begin{equation}\label{up}
\limsup_{n\to\infty}\frac{1}{n} \log \left|\left\{x\in X\colon\delta_x^n\in\mathcal K\right\}\right|\leq
-\inf_{\mu\in\mathcal K}\mathscr{I}(\mu),\end{equation}
\end{itemize} 
where
$ \log 0=-\infty$,
$\inf\emptyset=\infty$
and $\sup\emptyset=-\infty.$
The function $\mathscr{I}$ is called {\it a rate function}.
If the LDP holds, then the rate function is unique,
and given by the Legendre transform of the cumulant generating function \cite{DemZei98}.

In rough terms, the LDP implies that under iteration each empirical mean gets close to the set of measures
where the rate function vanishes.
These measures are physically relevant ones, or else considered to impede transport, slow down the rate of mixing of the system.
It is important to determine the set $\{\mu\in\mathcal M\colon \mathscr{I}(\mu)=0\}$, as it corresponds to the set of those limit distributions
that represent a sub-exponentially large set of initial conditions.
Also important is to describe the (in)stability of the structure of this set under small perturbations of the system.

For a transitive uniformly hyperbolic system with H\"older continuous derivative, the LDP was established by 
Takahashi \cite{Tak87}, Orey and Pelikan \cite{OrePel89}, Kifer 
\cite{Kif90}, Young \cite{You90}.
The rate function $\mathscr{I}$ is given by 
$$\mathscr{I}(\mu)=\begin{cases}
h(\mu)-   \int\sum_{\chi_i(x)>0}\chi_i(x)d\mu(x) &\text{ if $\mu$ is $f$-invariant};\\
-\infty&\text{ otherwise,}
\end{cases}$$
where
$h(\mu)=h(f;\mu)$ denotes the Kolmogorov-Sina{\u\i} entropy of $\mu$ and $\sum_{\chi_i(x)>0}\chi_i(x)$
the sum of positive Lyapunov exponents at $x$ counted with multiplicity.
The rate function vanishes only at the Sina{\u\i}-Ruelle-Bowen measure \cite{Bow75,Rue76,Sin72},
and this measure depends continuously on the system.
The LDP gives exponential bounds on the probabilities that the empirical means stay away from the
Sina{\u\i}-Ruelle-Bowen measure.
%Since the Sina{\u\i}-Ruelle-Bowen measure
%depends continuously on the system, 
%The zero of the rate function depends continuously on the system.

For non-hyperbolic systems, few results on the LDP were available
until recently.
For interval maps with neutral fixed point such as the Manneville-Pomeau map \cite{PomMan80},
 Pollicott and Sharp \cite{PolSha09} proved several results closely related to the LDP
 assuming the existence of an invariant probability measure that is absolutely continuous with
 respect to the Lebesgue measure.
The method in \cite{Chu11} implies the LDP for some non-hyperbolic systems
which are very close to uniformly hyperbolic ones, such as almost Anosov systems, 
interval maps with neutral fixed point, and topologically exact
unimodal maps with non-recurrent non-flat critical point.
In \cite{ChuTak14} the LDP was established for certain non-uniformly expanding quadratic maps 
under strong assumptions on the hyperbolicity and recurrence of the orbit of the critical point.

A substantial progress has been made in \cite{ChuRivTak} in which the LDP was established for {\it every} multimodal map with non-flat critical point
and H\"older continuous derivatives that is topologically exact.
This includes maps with a very weak form of hyperbolicity, and even those with pathological behaviors found by Hofbauer and Keller \cite{HofKel90,HofKel95},
for which there is no asymptotic measure and no good statistical limit theorem was previously known. 
This universality of the LDP amidst the diversity of one-dimensional dynamics challenges
the general paradigm that good statistical limit theorems are manifestations of 
(an weak form of) hyperbolicity.
 
 The aim of this paper is to establish the LDP for unimodal maps with non-recurrent flat critical point.
In \cite{ChuRivTak} all critical points are assumed to be non-flat (See e.g., \cite{dMevSt93} for the definition),
 and this assumption is crucial as developed below.
We remove this assumption at the cost of imposing the non-recurrence of the critical point.
We also study the structure of the set of zeros of the associated rate function.

In what follows,
let $X=[0,1]$ and
 $f\colon X\to X$ be
{\it a unimodal map}, i.e., a $C^1$ map whose critical set $\{x\in X\colon Df(x)=0\}$ consists of a single point $c\in(0,1)$ that is an extremum. 
We say $f$ is {\it topologically exact} if for any open subset $U$ of $X$ there exists an integer $n\geq1$ such that $f^n(U)=X$.
Let $\mathcal M(f)$ denote the set of elements of $\mathcal M$ which are $f$-invariant.
 {\it An $S$-unimodal map} is a unimodal map of class $C^3$
on $X\setminus\{c\}$
with negative Schwarzian derivative. %, i.e.,
% $D^3f/Df-(3/2)\left(D^2f/Df\right)^2<0$.
 Let $\omega(c)$ denote the omega-limit set of $c$.
 The critical point $c$ is {\it non-recurrent} if $c\notin\omega(c)$.

For an $S$-unimodal map with non-recurrent 
flat critical point (i.e., a critical point at which all derivatives vanish) having only hyperbolic repelling periodic points, Benedicks and Misiurewicz \cite{BenMis89} constructed
a $\sigma$-finite invariant measure that is absolutely continuous with respect to the Lebesgue measure.
Zweim\"uller \cite{Zwe04} proved statistical properties of the 
invariant measure,  
including a polynomial bound on 
decay of correlations for maps with a flat critical behavior 
like $\exp(-|x-c|^{-\alpha})$ $(\alpha>0)$. 
For a parametrized family of $S$-unimodal maps with
this type of critical behavior, %{\bf with some constraints} on $\alpha$, 
Thunberg \cite{Thu99}
proved a version of Benedicks-Carleson's theorem \cite{BenCar91}: the existence of a positive measure set of parameters for which the corresponding maps
exhibit an exponential growth of derivatives along the orbit of the critical point.
This positive measure set contains a dense subset corresponding to maps with non-recurrent critical point. 
The same type of flat critical behavior, to be referred to as of polynomial order in our terms,
 was also considered by
Dobbs \cite{Dob14}.

In what follows, for a flat critical point $c$ we assume there exists
a $C^1$ function $\ell$ on $X\setminus\{c\}$ such that the following holds:

\begin{itemize}

\item[(i)] 
$ \ell(x)\to\infty$ and $|D\ell(x)|\to\infty$.
Here, $x\to c$ indicates both 
as $x\to c+0$ and $x\to c-0$;

%\item[]$D\ell(x)>0$ for $x\in[0,c)$ and $D\ell(x)<0$ for $x\in(c,1]$;

\item[(ii)] there exist
$C^1$ diffeomorphisms $\xi$, $\eta$ of $\mathbb R$ such that
$\xi(c)=0=\eta(f(c))$ and $|\xi(x)|^{\ell(x)}=\eta( f(x))$ for all $x$ near $c$.
\end{itemize}

The function $\ell$ determines how fast $Df(x)$ goes to $0$ as $x\to c$.
For a technical reason as explained below,
we work with two specific rates of growth of $\ell$.
The flat critical point $c$ is 
 {\it of polynomial order} if there exists a $C^1$ function $v$ on $X$ such that $v(c)>0$
 and for all $x$ near $c$,
$$\ell (x) = |x-c|^{-v(x)}.$$
It is 
{\it of logarithmic order} 
if there exist a $C^1$ function $u$ on $X$ and $\alpha>0$ such that $u(c)>0$ and for all $x$ near $c$,
$$\ell(x)= u(x)| \log|x-c||^\alpha.$$
All our main results hold for topologically exact $S$-unimodal maps with non-recurrent flat critical point
that is of polynomial or logarithmic order. To simplify expositions we restrict ourselves to the case of polynomial order.

According to \cite{Chu11,ChuRivTak,ChuTak14}
we now introduce a function $\mathscr{I}\colon \mathcal M\to[0,\infty]$
which is a natural candidate for the rate function for the LDP for interval maps with critical points.
For an $S$-unimodal map $f$ and
 $\nu\in\mathcal M(f)$ define {\it a Lyapunov exponent} $\chi(\nu)=\chi(f;\nu)$ 
by $$\chi(\nu)=\int\log|Df|d\nu.$$
From the result of Bruin and Keller \cite{BruKel98},
 $\chi(\nu)\geq0$ holds\footnote{The proof does not use the non-flatness of the critical point.}
for every $\nu\in\mathcal M(f)$ provided all periodic points of $f$ are hyperbolic repelling.
Define $\mathscr{F}=\mathscr{F}(f;\, \cdot) \colon \mathcal M \to [-\infty, 0]$ by
$$ \mathscr{F}(\nu)
=
\begin{cases}h(\nu)-\chi(\nu) &\text{ if $\nu\in\mathcal M(f)$};\\
-\infty&\text{ otherwise.}\end{cases}$$
Although the entropy is upper semi-continuous, the Lyapunov exponent is not lower semi-continuous
due to the presence of the critical point, and hence
$\mathscr{F}$ is not upper semi-continuous.
To rectify this point we consider an upper semi-continuous regularization of $\mathscr{F}$.
Define 
\begin{equation}\label{rate}
 \mathscr{I}(\mu)
=
-\inf_{\mathcal G \ni \mu}\sup_{\nu\in\mathcal G}\mathscr{F}(\nu),
\end{equation}
where the infimum is taken over all open subsets~$\mathcal G$ of~$\mathcal M$ containing~$\mu$.
Note that $-\mathscr{I}$ is the minimal upper semi-continuous function which is greater than or equal to
$\mathscr{F}$:
if $\mathscr{G}\colon\mathcal M \to [-\infty, 0]$ is upper semi-continuous
and $\mathscr{G}(\mu)\geq \mathscr{F}(\mu)$ holds for every $\mu\in\mathcal M$, then
$\mathscr{G}(\mu)\geq -\mathscr{I}(\mu)$ holds for every $\mu\in\mathcal M$.
The simplest example in which $-\mathscr{I}\neq \mathscr{F}$ holds is the quadratic map $f(x)=4x(1-x)$.
We have $-\mathscr{I}(\delta_0)=-\log2$ and $\mathscr{F}(\delta_0)=-\log4$.
%Note that the set of zeros of $I_f$ is a nonempty convex set.

\begin{theorema}
Let $f\colon X\to X$ be a topologically exact $S$-unimodal map with non-recurrent flat critical point
that is
of polynomial order.
Then the Large Deviation Principle holds. The rate function is given by
$\mathscr{I}$.
\end{theorema}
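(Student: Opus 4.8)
The plan is to follow the overall scheme of \cite{ChuRivTak}, locating the places where the non-flatness of the critical point is used there and supplying substitutes based on the non-recurrence of $c$ together with the explicit polynomial form of $\ell$. First some reductions. Since $\mathcal M(f)$ is closed in $\mathcal M$ and $\mathscr F\equiv-\infty$ off $\mathcal M(f)$, definition \eqref{rate} forces $\mathscr I\equiv+\infty$ on $\mathcal M\setminus\mathcal M(f)$, so only invariant measures are relevant; and by Bruin--Keller \cite{BruKel98} one has $\chi(\nu)\in[0,\infty)$ for every $\nu\in\mathcal M(f)$ (the positive part of $\log|Df|$ is bounded, so finiteness of $\chi(\nu)$ is automatic), whence $\mathscr F(\nu)=h(\nu)-\chi(\nu)$ is finite. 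Because $\mathcal M$ is compact, exponential tightness is free, so the upper bound \eqref{up} will follow from the local estimate: for every $\mu\in\mathcal M$ and $\varepsilon>0$ there is an open $\mathcal G\ni\mu$ with $\limsup_n\frac1n\log|\{x:\delta_x^n\in\mathcal G\}|\le\sup_{\nu\in\overline{\mathcal G}}\mathscr F(\nu)+\varepsilon$; and \eqref{low} will follow once we show, for every $\nu\in\mathcal M(f)$ and open $\mathcal G\ni\nu$, that $\liminf_n\frac1n\log|\{x:\delta_x^n\in\mathcal G\}|\ge\mathscr F(\nu)$, the upper semicontinuous regularization in \eqref{rate} being then automatic.

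\emph{One-dimensional input, and the main obstacle.} Since $c\notin\omega(c)$, after shrinking we may fix $r>0$ with $f^k(c)\notin U:=(c-r,c+r)$ for all $k\ge1$, and all periodic orbits of $f$ are then hyperbolic repelling. Ma\~n\'e's theorem (see \cite{dMevSt93}) gives $C>0$, $\lambda>1$ with $|Df^n(x)|\ge C\lambda^n$ whenever $x,f(x),\dots,f^{n-1}(x)\notin U$, and, with the negative Schwarzian derivative, the Koebe principle gives bounded distortion along such orbit segments. The genuinely new ingredient is the control of a single passage through $U$: from (i)--(ii) and polynomial order, $-\log|Df(x)|\asymp|x-c|^{-v(c)}\,\bigl|\log|x-c|\bigr|$ for $x\in U$, and one must show that approaching $c$ to within distance $d$ at some time triggers a burst of expansion near the orbit of $f(c)$ whose total strength compensates, up to an additive term bounded uniformly in $d$, the contraction incurred at that time beyond a fixed threshold. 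Concretely, letting $\phi_\rho$ denote the continuous truncations of $\log|Df|$ from below with $\phi_\rho\downarrow\log|Df|$ as $\rho\to0$ and $\phi_\rho=\log|Df|$ outside $(c-\rho,c+\rho)$, this passage estimate should yield, for all $0<\delta\ll\rho$ and all $x$, a bound on the dynamical ball $B_n(x,\delta)=\{y:|f^jy-f^jx|<\delta,\ 0\le j<n\}$ of the form $|B_n(x,\delta)|\le K(\rho,\delta)\exp\!\bigl(-\sum_{j=0}^{n-1}\phi_\rho(f^jx)\bigr)$ with $\frac1n\log K(\rho,\delta)\to0$. This is exactly the step where \cite{ChuRivTak} invokes non-flatness (there $-\log|Df|\asymp-\beta\log|x-c|$ and the passage is treated directly), and it is the hard part: one needs the trade-off between depth of approach to $c$ and subsequent expansion to be controlled uniformly, which is why the argument is carried out only for polynomial (and, similarly, logarithmic) order.

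\emph{Upper bound.} Fix $\mu$ and $\varepsilon>0$, take $\mathcal G\ni\mu$ small, and cover $\{x:\delta_x^n\in\mathcal G\}$ by the balls $B_n(y,\delta)$ over a maximal $(n,\delta)$-separated subset $E$ of this set. A standard entropy-counting argument based on upper semicontinuity of $h$ (Katok's formula) gives $\limsup_n\frac1n\log\#E\le\sup_{\nu\in\overline{\mathcal G}}h(\nu)$, while the measure estimate above gives $|B_n(y,\delta)|\le K(\rho,\delta)e^{-n\int\phi_\rho\,d\delta_y^n}$, and since $\phi_\rho$ is continuous, $\int\phi_\rho\,d\delta_y^n$ is uniformly close to $\inf_{\nu\in\overline{\mathcal G}}\int\phi_\rho\,d\nu$ for $\mathcal G$ small and $n$ large. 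Summing, $\limsup_n\frac1n\log|\{x:\delta_x^n\in\mathcal G\}|\le\sup_{\nu\in\overline{\mathcal G}}\bigl(h(\nu)-\int\phi_\rho\,d\nu\bigr)$ for every $\rho>0$; letting $\rho\to0$ and using compactness of $\overline{\mathcal G}$, upper semicontinuity of $h$, and monotone convergence $\int\phi_\rho\,d\nu\downarrow\chi(\nu)$, the right-hand side decreases to $\sup_{\nu\in\overline{\mathcal G}}\mathscr F(\nu)$, which is the desired local estimate.

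\emph{Lower bound.} Fix an ergodic $\nu\in\mathcal M(f)$ and an open $\mathcal G\ni\nu$ (the non-ergodic case follows from the ergodic decomposition and affinity of $h$ and $\chi$). If $\chi(\nu)>0$, a Katok-type construction—using the distortion control above along orbit segments shadowing $\nu$-generic points, and topological exactness to splice such segments—should produce, for large $n$, at least $e^{n(h(\nu)-\varepsilon)}$ pairwise disjoint intervals $I_y$ with $|I_y|\ge e^{-n(\chi(\nu)+\varepsilon)}$ and $\delta_x^n\in\mathcal G$ for all $x\in I_y$, so that $|\{x:\delta_x^n\in\mathcal G\}|\ge e^{n(\mathscr F(\nu)-2\varepsilon)}$. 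When $\chi(\nu)=0$ (then necessarily $h(\nu)=0$, and $\nu$ must charge arbitrarily small neighbourhoods of $c$), one instead builds, as in \cite{ChuRivTak}, a sub-exponentially large family of orbit segments shadowing $\nu$, the requisite Lebesgue estimates again being furnished by the passage analysis above. Taking the supremum over $\nu\in\mathcal G\cap\mathcal M(f)$ yields \eqref{low} with rate function $-\mathscr I$, and combined with the upper bound this proves the Large Deviation Principle with rate function $\mathscr I$.
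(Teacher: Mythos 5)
Your proposal takes a genuinely different route from the paper's, and the route has a serious gap at the step you yourself flag as ``the hard part''.

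\textbf{Where the approaches diverge.} The paper never works with dynamical balls or with a Katok-type counting of $(n,\delta)$-separated sets. Instead, it builds an inducing scheme $(I,\mathcal W,R)$ from a first-return map and proves a \emph{specification-like property} (Proposition~\ref{nse}): every connected component $A$ of $\{R>n\}$ contains some $J\in\mathcal W$ with $n<R(J)\le(1+\varepsilon)n$ and $|J|\ge C(\varepsilon)|A|/n$. This is the device that replaces the distortion estimate $|f(U)|/|f(\widehat U)|\le C_0|U|/|\widehat U|$ that \cite{ChuRivTak} derives from non-flatness. The specification-like property is then used to assemble horseshoes of pull-backs of $I$ with a \emph{common} inducing time, and Lemma~\ref{horse} gives the free-energy bound $\log\bigl(\sum_t|L_t|\bigr)\le q\,\mathscr F(\mu)+\mathrm{const}$ in a single stroke, without separating the entropy count from the Lyapunov-exponent count. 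The place where the polynomial (or logarithmic) order of the critical point enters is Lemma~\ref{keyest}: $\limsup_k |I^{+}_{[(1+\varepsilon)k]}|/|I^{+}_k|<1$. That decay estimate for the nested first-return intervals $I_k^\pm$ is what your proposal would need an analogue of, and it is quite different from your ``passage estimate''.

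\textbf{The gap.} Your proposed bound
\[
|B_n(x,\delta)|\le K(\rho,\delta)\,\exp\!\Bigl(-\sum_{j=0}^{n-1}\phi_\rho(f^jx)\Bigr)
\]
for \emph{all} $x$ and $n$ is false for a flat critical point, precisely because truncating $\log|Df|$ from below does not truncate the geometry. Take $x$ with $|x-c|=\epsilon\ll\rho$ and let $n$ be moderate (so that the critical piece of the orbit of $x$ still shadows the orbit of $c$ and the $n$th-time dynamical ball does not yet straddle $c$). Then $f^n$ is a diffeomorphism on the component of $B_n(x,\delta)$ containing $x$, Ma\~n\'e's theorem gives bounded distortion along $f(x),\dots,f^{n-1}(x)$, and the mean value theorem yields $|B_n(x,\delta)|\asymp \delta/|Df^n(x)|=\delta/\bigl(|Df(x)|\cdot|Df^{n-1}(f(x))|\bigr)$. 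Your right-hand side, by contrast, is $\asymp K\,e^{m_\rho}/|Df^{n-1}(f(x))|$ where $e^{-m_\rho}=\min_{|y-c|=\rho}|Df(y)|$. Comparing, your claimed bound would force $|Df(x)|\gtrsim\delta\,e^{-m_\rho}$, i.e.\ $-\log|Df(x)|\le m_\rho+O(1)$. But $-\log|Df(x)|\asymp\ell(x)\,|\log|x-c||\asymp\epsilon^{-v(c)}|\log\epsilon|\to\infty$ as $\epsilon\to0$, while $m_\rho$ is fixed once $\rho$ is. So the ratio of the two sides is unbounded: the Lebesgue measure of the dynamical ball remembers the \emph{true} derivative, not its truncation. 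This is precisely why the paper abandons dynamical balls and works with first-return pull-backs of a nice interval $I$ disjoint from $\overline{\{f^n(c)\colon n\ge1\}}$, where the Koebe principle applies along whole return branches and the troublesome derivative factor $|Df(x)|$ for $x$ near $c$ never enters the estimates in isolation.

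\textbf{A secondary concern.} The step ``a standard entropy-counting argument based on upper semicontinuity of $h$ (Katok's formula) gives $\limsup_n\frac1n\log\#E\le\sup_{\nu\in\overline{\mathcal G}}h(\nu)$'' is not free for a unimodal map: one must establish some form of entropy-expansiveness (and its uniformity as $\mathcal G$ shrinks) to pass from a maximal separated subset of $\{\delta_x^n\in\mathcal G\}$ to an entropy bound. The paper sidesteps this too: Lemma~\ref{horse} builds an invariant measure directly from a horseshoe of pull-backs and bounds the Lebesgue measure of the union by the free energy, so entropy and Lyapunov exponent never have to be counted separately.

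In short: the plan is sensible in outline, but the crucial Lebesgue estimate you would need does not hold as stated, and fixing it leads naturally to the paper's inducing-scheme construction and to Lemma~\ref{keyest}, which is what really uses the polynomial order.
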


  In the area of one-dimensional dynamics, 
all critical points are often assumed to be non-flat.
Otherwise, interactions between the contraction ruled by the critical point
and the expansion away from the critical point become more delicate.
Flat critical points behave like neutral fixed points by trapping nearby orbits for a very long period of time,
and hence can influence on statistical properties of the map.

By the result of Benedicks and Misiurewicz \cite{BenMis89},
for a map as in Theorem A there exists
a $\sigma$-finite invariant measure that is absolutely continuous with respect to the Lebesgue measure. This measure is 
unique up to a multiplicative constant, and is a finite measure if and only if $\int\log|Df(x)|dx>-\infty$.
 If finite, then its normalization is denoted by $\mu_{\rm ac}$ and called {\it an acip}.
Many of the statistical properties of $f$ depend on whether the map has an acip or not, see
Zweim\"uller \cite{Zwe04}. 
Theorem A indicates that the LDP is a special limit theorem which holds regardless of whether
the map has an acip or not.

A proof of Theorem A is briefly outlined as follows. 
In establishing the LDP for one-dimensional non-hypebolic systems, the lower bound is already known to hold
for a broad class of smooth interval maps including those in Theorem A, see
 \cite[Section 7]{Chu11} and \cite[Proposition 4.1]{ChuRivTak}. Hence we do not repeat a proof of it here.
On the other hand, the upper bound is much harder to prove. 
A strategy, developed in \cite{Chu11} and was then carried out successfully in  \cite{ChuRivTak,ChuTak14},
is to construct a ``good'' horseshoe.
We take the same strategy, and our main tool is {\it an inducing scheme equipped 
with a specification-like property} described in Sect.2.2.

%Let us give a couple of remarks on the meanings and scopes of Theorem A.

%If the critical point is non-recurrent, 
%The inducing scheme
%enables us to overcome a new difficulty arising from the flat critical point.
%We  explain why the method of proof in \cite{ChuRivTak} does not work for maps with flat critical point.
%If the critical point is non-flat, then
%the distance to the critical value is evaluated with some power of the distance to the critical point.

The class of maps treated in this paper is disjoint from those treated
in \cite{ChuRivTak}. In \cite{ChuRivTak} all critical points are assumed to be non-flat,
and in the construction of good horseshoes 
 the following estimate was used in order to evaluate the effect of each return to a critical zone (See \cite[Lemma 3.2]{ChuRivTak}):
for every interval $\widehat U$ contained in a small neighborhood of the critical set and every subinterval $U$ of $\widehat U$,  
\begin{equation*} \frac{|f(U)|}{ |f(\widehat U)|}
\leq C_0\frac{ |U| }{ |\widehat U|}, \end{equation*}
where $C_0>1$ is a uniform constant.
This estimate obviously fails in a neighborhood of a flat critical point.
 The inducing scheme equipped with a specification-like property %together with the non-recurrence of critical point,
enables us to dispense with this type of estimate.
%we abandon 
%instead of evaluating the direct effect of each return to a critical zone,
%we wait until the loss of derivatives incurred by each return
%is compensated with large derivatives away from the flat critical point. %This approach
% turns out to be successful if the flat critical point is assumed to be non-recurrent,
% and of logarithmic or polynomial order.
Together with the assumption of non-recurrence,
we use the assumption on the flatness of the critical point solely for constructing this inducing scheme,
see Lemma \ref{keyest}.

%Our assumption on the flatness $\ell$ in Theorem A  leaves the possibility that the LDP does not hold for maps of other order of flatness,
%e.g., for those with very flat critical points. Although a non-smooth one-dimensional map
%was constructed in \cite{Chu11} for which the LDP does not hold, we still lack a smooth example.

We now state a corollary which 
is a direct consequence of Theorem A and of the Contraction Principle \cite{DemZei98}.
%and use it to describe further novelties of our main results.
For each continuous function $\phi\colon X\to\mathbb R$
and an integer $n\geq1$ write
$ S_n\phi
=
\sum_{i=0}^{n-1}\phi\circ f^i$ and put
$$ c_\phi
=
\inf_{x\in X}\liminf_{n\to\infty}\frac{1}{n}S_n\phi(x)
\quad\text{and}\quad
d_\phi
=
\sup_{x\in X}\limsup_{n\to\infty}\frac{1}{n}S_n\phi(x).$$
Define $q_\phi\colon \mathbb R\mapsto [0, + \infty]$ by
\begin{displaymath}
q_\phi(s)
=
\inf\left\{\mathscr{I}(\mu)\colon\mu\in\mathcal M, \int\phi d\mu=s\right\}.
\end{displaymath}
This function is bounded on~$[c_\phi, d_\phi]$ and constant equal to~$+ \infty$ on~$\mathbb R \setminus [c_\phi,d_\phi]$.
Moreover, $q_\phi$ is convex on~$\mathbb R$, and therefore continuous on~$(c_\phi,d_\phi)$.
\begin{cor1}
Let $f\colon X\to X$ be a topologically exact $S$-unimodal map with non-recurrent flat critical point
that is
of polynomial order.
 For every continuous function $\phi\colon X\to\mathbb R$ satisfying $c_\phi<d_\phi$ and for every interval~$K$ intersecting $(c_\phi,d_\phi)$,
$$ \lim_{n\to\infty}\frac{1}{n}\log \left|\left\{x\in X\colon \frac{1}{n}S_n\phi(x) \in K\right\}\right|
=
-\inf_{s\in K}q_\phi(s).$$
\end{cor1}

A ``local'' version of this type of formula was proved by Keller and Nowicki \cite{KelNow92},
Melbourne and Nicol \cite{MelNic08}, Rey-Bellet and Young \cite{ReyYou08} under
assumptions of some (weak form of) hyperbolicity. They necessarily imply the existence of %the 
%Sina{\u\i}-Ruelle-Bowen measures or
 acips,
and the interval $K$ is required to be sufficiently close to the corresponding empirical mean.

It is important to know for which $\phi$ and $K$ the convergence of the Lebesgue measure
of the set is exponential.
 The method of Melbourne and Nicol \cite{MelNic08} is applicable
 to the case where $f$ has an acip and $K$ is sufficiently close to the empirical mean,
  and yields a sub-exponential upper bound on the Lebesgue measure 
 of the set in the Corollary. This bound cannot be improved with their method, 
 because their bound is closely linked to the decay rate of the tail probability of the 
  associated inducing scheme.
The decay rate for the map $f$ as in Theorem A
 is sub-exponential, see \cite[Proposition 1]{Zwe04}.
A characterization of the zeros of $q_\phi$ would allow us to establish an exponential convergence.

%Once the LPD has been established, the next thing to do is to study the zeros of the rate function.
 In this way we are led to the analysis of zeros of the rate function \eqref{rate}.
%As an initial step toward describing the structure of zeros of the rate functions for non-hyperbolic systems,
%For non-hyperbolic systems, the structures of zeros of the rate functions and their (in)stability against perturbation of the system
%is poorly understood.
%we analyze the rate function for maps in Theorem A. Then we apply our results
%to a specific family of unimodal maps with flat critical point and give a complete description
%of the structure of the zeros of the rate function.
%Since $\mathcal M$ is compact and $I_f$ is lower semi-continuous, the set 
 %is non-empty and closed.
A measure $\mu\in\mathcal M$ is {\it a post-critical measure}
if there exists an increasing sequence $\{m_i\}_{i\geq0}$ of positive integers such that
$\delta_{c}^{m_i}$ converges weakly to $\mu$ as $i\to\infty$.
Since $\mathcal M$ is compact, post-critical measures exist.
Each post-critical measure is $f$-invariant, and its support is contained in $\omega(c)$.

%We study the structure of the zeros of the function $I_f$. Under slightly more general assumptions.
% We say $f$ satisfies {\it the Misiurewicz condition} \cite{Mis81} 
% if the following holds:
% \begin{itemize}
 %\item[(M1)] the critical point is non-recurrent;

% \item[(M2)] 
% if $x\in X$ and $n\geq1$ are such that
% $f^n(x)=x$, then $|Df^n(x)|>1$. 
 %\end{itemize}

%Topologically transitive $S$-unimodal maps with non-recurrent critical point
%satisfies the Misiurewicz condition.

\begin{theoremb}\label{thmb}
Let $f\colon X\to X$ be a topologically exact $S$-unimodal map with non-recurrent flat critical point
that is of polynomial order. 
 If $\mu\in\mathcal M(f)$ is a post-critical measure,
then $\mathscr{I}(\mu)=0$.
\end{theoremb}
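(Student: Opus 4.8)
The plan is to reduce the equality $\mathscr{I}(\mu)=0$ to a statement about small Lyapunov exponents, and then to produce the required invariant measures by shadowing long pieces of the orbit of $c$ and closing them up inside the inducing scheme of Section~2.2. First I would rewrite the definition \eqref{rate}. Since $\mathscr{F}\le 0$ everywhere we always have $\mathscr{I}(\mu)=-\inf_{\mathcal G\ni\mu}\sup_{\nu\in\mathcal G}\mathscr{F}(\nu)\ge 0$, and equality holds if and only if for every open $\mathcal G\ni\mu$ and every $\eta>0$ there is $\nu\in\mathcal M(f)\cap\mathcal G$ with $\mathscr{F}(\nu)>-\eta$. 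As $h(\nu)\ge 0$ and $\chi(\nu)\ge 0$ (the latter by \cite{BruKel98}) and $\mathscr{F}(\nu)=h(\nu)-\chi(\nu)$, it even suffices to find, for each such $\mathcal G$ and $\eta$, an $f$-invariant $\nu\in\mathcal G$ with $\chi(\nu)<\eta$. So the whole problem becomes: construct $f$-invariant measures, weak-$*$ close to $\mu$, with arbitrarily small Lyapunov exponent.

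Fix $\mathcal G$ and $\eta$. Since $\mu$ is post-critical, choose a large $m=m_i$ with $\delta_c^m$ deep inside $\mathcal G$; by non-recurrence the forward orbit of $c$ stays at distance $\ge\rho_0>0$ from $c$, a region where $\log|Df|$ is bounded and uniformly continuous, so $\tfrac1m\log|Df^{m-1}(f(c))|=\chi(\mu)+o(1)$ because $\delta_c^m\to\mu$. Here is the geometric input from flatness: with $c$ of polynomial order, a point at distance $\delta$ from $c$ has image at distance $\asymp\delta^{\ell(\delta)}$ from $f(c)$, which is super-exponentially small; choosing $\delta=\delta(m)\to 0$ so that $e^{m\chi(\mu)}\,\delta^{\ell(\delta)}\asymp\varepsilon_1$ for a fixed small $\varepsilon_1$, the interval $I_0=(c-\delta,c+\delta)$ has $f^j(I_0)$ within $\varepsilon_1 e^{(j-m)\chi(\mu)}$ of $f^j(c)$ for $0\le j\le m$. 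Hence the orbit of every point of $I_0$ shadows $c,f(c),\dots,f^{m-1}(c)$ up to $O(1)$ final steps, while $f^m(I_0)$ is already macroscopic; and, crucially, along such a branch $\tfrac1m\log|Df^{m}|=O(\tfrac{\log m}{m})$, the enormous contraction $|Df|\asymp\delta^{\ell}$ suffered on entering near $c$ cancelling the expansion $\asymp e^{m\chi(\mu)}$ gained along the (non-recurrent, hence uniformly expanding) critical orbit. Making this precise — in particular controlling the distortion of $f^{m}$ on such a branch near the flat critical point, where the naive estimate of \cite{ChuRivTak} fails — is exactly what Lemma~\ref{keyest} and the inducing scheme are designed for, and I would invoke them rather than reprove the estimate.

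It remains to turn this almost derivative-neutral branch into an invariant measure. Topological exactness is uniform on intervals of a fixed length, so $f^{r}$ maps $f^m(I_0)$ onto $X$ for some bounded $r$; combining this with the Markov and specification-like structure of the inducing scheme one can splice the ``critical'' itinerary carried by $I_0$ with a closing itinerary of length $r'=o(m)$ to obtain a periodic point $p$ of period $T=m+r'$ whose orbit shadows $c,f(c),\dots,f^{m-1}(c)$ for all but $o(m)$ steps. Then $\nu:=\delta_p^{T}$ is $f$-invariant, and $\nu\to\mu$ weakly as $m=m_i\to\infty$ (the $o(m)$ unshadowed steps are negligible and $\delta_c^{m_i}\to\mu$), so $\nu\in\mathcal G$ for $m$ large. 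Finally, writing $Df^{T}(p)=Df^{m}(p)\cdot Df^{r'}(f^m(p))$, the first factor satisfies $\tfrac1m\log|Df^{m}(p)|=O(\tfrac{\log m}{m})$ by the balance above and $\log|Df^{r'}(f^m(p))|\le r'\log\|Df\|_\infty=o(m)$, so $\chi(\nu)=\tfrac1T\log|Df^{T}(p)|=o(1)$; since also $\chi(\nu)\ge 0$, we get $\chi(\nu)<\eta$ for $m$ large, hence $\mathscr{F}(\nu)=-\chi(\nu)>-\eta$, and $\mathscr{I}(\mu)=0$ follows.

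The main obstacle is the pair of steps in the last two paragraphs: realizing the shadowing of the critical orbit \emph{inside} the inducing scheme with distortion under control near the flat critical point, and closing it up in $o(m)$ extra time without spoiling either the weak-$*$ convergence $\nu\to\mu$ or the bound $\tfrac1m\log|Df^{m}|=o(1)$. This is precisely where non-recurrence, the polynomial (or logarithmic) order of $c$, and the specification-like inducing scheme all enter; everything else is routine bookkeeping with the modulus of continuity of $\log|Df|$ near $\omega(c)$.
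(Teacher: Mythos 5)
Your strategy coincides with the paper's at a high level: reduce $\mathscr{I}(\mu)=0$ to producing periodic $f$-invariant measures weak-$*$ close to $\mu$ with small Lyapunov exponent, and obtain them from orbits that pass very close to the flat critical point so that the enormous contraction there cancels the expansion accumulated along the shadowed critical orbit. Where you diverge is the closing step: the paper never needs a splicing argument or the specification-like property of Proposition~\ref{nse}. It observes instead that each branch $J_k^+\in\mathcal W$ of the first-return inducing scheme is contained in $I$ and mapped by $f^{R_k}$ diffeomorphically onto $I$, so its closure already contains a periodic point $y_k$ of period exactly $R_k$; the candidate measures are the $\delta_{y_k}^{R_k}$ for $k$ chosen against the given $m_i$ (so that $R_k\le m_i<R_{k+1}$, whence $|R_k-m_i|\le R(V_0^+)$), and $\delta_{y_k}^{R_k}\to\mu$ follows from Ma\~n\'e's shadowing estimates on $|f^n(I_k)|$. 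This makes the argument independent of Proposition~\ref{nse} and avoids the $r'=o(m)$ closing you flag as the main obstacle.

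Two further corrections of detail. The derivative estimate you need is Lemma~\ref{P}, not Lemma~\ref{keyest}: the latter controls the decay of $|I_k^\pm|$ and feeds into Proposition~\ref{nse}, which is used only for Theorem~A. Lemma~\ref{P} gives $R(x)\sim\ell(x)\log|x-c|^{-1}$ and $|Df^{R(x)}(x)|\sim\left|D\ell(x)\log|x-c|+\ell(x)/(x-c)\right|$, and combining this with the polynomial-order fact $\lim_{x\to c}\log|D\ell(x)|/\bigl(\ell(x)\log|x-c|^{-1}\bigr)=0$ yields $\chi(\delta_{y_k}^{R_k})\to 0$ directly. Your heuristic balance of $\delta^{\ell(\delta)}$ against $e^{m\chi(\mu)}$ tacitly presumes a single uniform expansion rate $\chi(\mu)$ along the first $m$ iterates of $f(c)$, which need not hold pointwise; the Lemma~\ref{P} bookkeeping sidesteps this by estimating $R(x)$ and $|Df^{R(x)}(x)|$ simultaneously in terms of the geometry at $c$, with no reference to $\chi(\mu)$.
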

%By Theorem B, for a map in Theorem A having an acip
%the rate function vanishes at a measure other than the acip.
%As a result, the local large deviation result of Keller and Nowicki \cite[Theorem 1.2]{KelNow92}
 %no longer holds. 

The next theorem gives a partial characterization of the zeros of the rate function.
%Note that the set $\{\mu\in\mathcal M\colon I_f(\mu)=0\}$ is a closed convex set.
\begin{theoremc}\label{thmc}
Let $f\colon X\to X$ be a topologically exact $S$-unimodal map with a non-recurrent flat critical point $c$
that is of polynomial order. Then the following holds:
\begin{itemize}

\item[(i)] Assume $f$ has an acip, $f|_{\omega(c)}$ is uniquely ergodic,
 and the unique post-critical measure denoted by $\delta(c)$ has zero entropy.
 Then $$\{\mu\in\mathcal M(f)\colon\mathscr{I}(\mu)=0\}=\{p\delta(c)+(1-p)\mu_{\rm ac}\colon 0\leq p\leq1\};$$
 %If $\mu\in\mathcal M(f)$ and $\mathscr{I}(\mu)=0$, then there exists $p\in[0,1]$ 
%such that $\mu=p\delta(c)+(1-p)\mu_{\rm ac}$;

\item[(ii)] Assume $f$ has no acip. If
$\mu\in\mathcal M(f)$ and $\mathscr{I}(\mu)=0$, then $\mu(\omega(c))=1$.
\end{itemize}
\end{theoremc}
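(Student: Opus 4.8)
The plan is to work from the following reformulation, immediate from~\eqref{rate}: one has $\mathscr I(\mu)=0$ if and only if there is a sequence $(\nu_k)$ in $\mathcal M(f)$ with $\nu_k\to\mu$ weakly and $h(\nu_k)-\chi(\nu_k)\to 0$. Indeed $-\mathscr I(\mu)=\inf_{\mathcal G\ni\mu}\sup_{\nu\in\mathcal G}\mathscr F(\nu)$, and since $\sup_{\mathcal G}\mathscr F$ is nonincreasing as $\mathcal G$ shrinks and is bounded above by $0$, this infimum vanishes precisely when $\sup_{\mathcal G}\mathscr F=0$ for every neighbourhood $\mathcal G$ of $\mu$; one then extracts $\nu_k$ from a countable neighbourhood basis, automatically in $\mathcal M(f)$ because $\mathscr F\equiv-\infty$ off $\mathcal M(f)$.

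For the inclusion ``$\supseteq$'' in (i): by Theorem B, $\mathscr I(\delta(c))=0$, so the reformulation furnishes $\sigma_k\to\delta(c)$ in $\mathcal M(f)$ with $\mathscr F(\sigma_k)\to 0$. Also $\mathscr F(\mu_{\rm ac})=0$: the induced Markov map of Sect.~2.2 satisfies Rokhlin's identity and Abramov's formula rescales both $h$ and $\chi$ by the expected return time, whence $h(\mu_{\rm ac})=\chi(\mu_{\rm ac})<\infty$; consequently $\mathscr I(\mu_{\rm ac})=0$ since $-\mathscr I\ge\mathscr F$ and $-\mathscr I\le 0$. For $p\in[0,1]$ put $\nu_k=p\,\sigma_k+(1-p)\mu_{\rm ac}\in\mathcal M(f)$; then $\nu_k\to p\,\delta(c)+(1-p)\mu_{\rm ac}$, and by affinity of $h$ and $\chi$ on $\mathcal M(f)$ one has $\mathscr F(\nu_k)=p\,\mathscr F(\sigma_k)\to 0$, so $\mathscr I(p\,\delta(c)+(1-p)\mu_{\rm ac})=0$.

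Both the inclusion ``$\subseteq$'' in (i) and the whole of (ii) would follow from a single assertion: \emph{if $\mathscr I(\mu)=0$ then every ergodic component of $\mu$ is either $\mu_{\rm ac}$ or is carried by $\omega(c)$}; in (ii), where no acip exists, this forces $\mu(\omega(c))=1$, and in (i) unique ergodicity of $f|_{\omega(c)}$ then forces $\mu=p\,\delta(c)+(1-p)\mu_{\rm ac}$. To prove it I would take $(\nu_k)$ from the reformulation and ergodically decompose, $\nu_k=\int\omega\,d\hat\nu_k(\omega)$. Since $h(\omega)\le\chi(\omega)$ for every $\omega\in\mathcal M(f)$ (Ruelle's inequality together with $\chi\ge 0$) and $\int(\chi(\omega)-h(\omega))\,d\hat\nu_k(\omega)=\chi(\nu_k)-h(\nu_k)\to 0$, a Chebyshev estimate shows that, up to an asymptotically negligible fraction of the mass, each $\nu_k$ is a convex combination of ergodic measures $\omega$ with $\chi(\omega)-h(\omega)$ as small as we like; hence $\mu$ lies in $\bigcap_{\varepsilon>0}\overline{\mathrm{conv}}\,\{\omega\ \text{ergodic}:\chi(\omega)-h(\omega)<\varepsilon\}$. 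It then remains to show that every weak accumulation point, as $\varepsilon\downarrow 0$, of ergodic measures with $\chi(\omega)-h(\omega)<\varepsilon$ lies in the closed convex hull of $\{\mu_{\rm ac}\}\cup\{\nu\in\mathcal M(f):\nu(\omega(c))=1\}$; a compactness-and-convexity argument (using that this set is closed, convex and admits a basis of convex neighbourhoods) then transports the conclusion through the convex hulls to $\mu$.

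This last step is the crux, and is where flatness, non-recurrence and the inducing scheme all enter; I expect it to be the main obstacle precisely because $\chi$ is only upper semi-continuous, so the soft measure-theoretic estimates do not by themselves control the limit. Two facts should govern it. First, flatness gives a \emph{uniform} mass estimate: for every $\omega\in\mathcal M(f)$ one has $\omega(B(c,r))\to 0$ as $r\to 0$, uniformly, since $\int_{B(c,r)}(-\log|Df|)\,d\omega\le\int_{B(c,r_0)}(-\log|Df|)\,d\omega\le-\chi(\omega)+C(r_0)\le C(r_0)$ for a fixed small $r_0$ and $r<r_0$, while $-\log|Df|\to\infty$ on $B(c,r)$; hence no invariant measure carries a definite mass arbitrarily near $c$, and any mass of the $\omega$'s that slides towards $c$ in the weak limit is, by invariance and because the forward orbit of $c$ accumulates on $\omega(c)$, compensated by mass that resurfaces on $\omega(c)$. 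Second, an ergodic $\omega$ with $\omega(\omega(c))=0$ spends a positive fraction of time away from $\omega(c)$ in a uniformly expanding region (Mañé's theorem, via non-recurrence and the absence of non-repelling periodic orbits, a consequence of topological exactness), hence lifts to the uniformly hyperbolic Markov inducing scheme of Sect.~2.2; there the condition ``$h(\omega)-\chi(\omega)$ small'' becomes a near-equilibrium condition for the induced measure relative to the induced potential, and the hyperbolic theory (Ledrappier's theorem and Rokhlin's formula, plus uniqueness of the induced acip) forces the induced measure to be close to the acip of the inducing scheme — i.e.\ $\omega$ close to $\mu_{\rm ac}$ — unless its return-time integral is large, in which case $\omega$ itself is close to a measure carried by $\omega(c)$. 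Combining the two controls, every such accumulation point decomposes as a combination of $\mu_{\rm ac}$ (present only when the acip exists) and invariant measures supported on $\omega(c)$, which is exactly what is needed; making this ``escape of mass towards the flat critical point versus convergence to the acip'' dichotomy quantitative and uniform is the technical heart of the proof.
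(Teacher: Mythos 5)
Your easy inclusion and the reformulation of $\mathscr I(\mu)=0$ as ``there is a sequence $\nu_k\to\mu$ in $\mathcal M(f)$ with $\mathscr F(\nu_k)\to 0$'' are fine and match what the paper does. You have also correctly identified the direction of the hard step: show that a zero of the rate function cannot charge anything other than $\omega(c)$ and (when it exists) the acip. But what you offer for that step is a sketch of intuitions, not a proof, and the actual mechanism the paper uses is absent.

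The crux is precisely the point you flag as ``the main obstacle'': the Lyapunov exponent is only upper semi-continuous, so soft estimates (Chebyshev on the ergodic decomposition, uniform smallness of $\omega(B(c,r))$) cannot control the limit. The paper does not argue by partitioning the approximating ergodic measures into ``close to $\mu_{\rm ac}$'' versus ``close to $\omega(c)$.'' Instead it proves a quantitative lower bound on the amount by which $\chi$ can drop in the limit: writing the limit as $\mu=p\nu+(1-p)\nu_\bot$ with $\nu(\omega(c))=1$, $\nu_\bot(\omega(c))=0$, Lemma~\ref{sentak1} gives $\liminf_k\chi(\mu_k)\ge(1-p)\chi(\nu_\bot)$ for ergodic $\mu_k\to\mu$. (Producing the ergodic sequence $\mu_k$ is itself Lemma~\ref{new} and is not free: it needs entropy-density of ergodic measures, the continuity Lemma~\ref{usc} off $\omega(c)$, and Dobbs' characterization of acips to rule out mass concentrating on $\omega(c)$ with nonvanishing free energy.) Lemma~\ref{sentak1} is proved by a delicate partition-of-unity estimate on $\log|Df|$ near the critical orbit, and this is exactly where flatness of polynomial order enters quantitatively. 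Your ``uniform mass estimate'' and the heuristic about the inducing scheme forcing lifted measures to be near the induced acip do not substitute for it; in particular, the near-equilibrium argument you gesture at is not developed and, as stated, does not produce a bound of the form needed.

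With Lemma~\ref{sentak1} in hand, the paper combines it with the upper semi-continuity of entropy and (for Proposition~\ref{c}(ii)/Theorem~C(i)) the hypothesis $h(\nu)=0$ to deduce $\mathscr F(\nu_\bot)=0$, and then applies Dobbs' theorem (Theorem~\ref{dob}) to identify $\nu_\bot$ as the acip, and, for Proposition~\ref{c}(iii)/Theorem~C(ii), the Dobbs--Todd upper semi-continuity of free energy to manufacture an acip and reach a contradiction. Note also that your ``single key assertion'' (every ergodic component of a zero is either $\mu_{\rm ac}$ or carried by $\omega(c)$) is \emph{stronger} than what the paper proves or needs: without the assumption that $h(\delta(c))=0$ the paper only concludes $\mu_{\rm ac}\ll\mu$ (Proposition~\ref{c}(i)), not the clean decomposition. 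Your route would therefore need the zero-entropy hypothesis built in from the start, which you have not used. In short: the skeleton is right, but the actual engine (Lemma~\ref{sentak1}, Theorem~\ref{dob}, and the Dobbs--Todd result) is missing, and it is exactly the missing engine that overcomes the lower semi-continuity failure you correctly identify as the obstacle.
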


%The additional assumption in Theorem B holds for the case where the critical point is pre-periodic, and more generally, for the case where 
%$f|_{\omega(c)}$ is topologically conjugate to a uniquely ergodic subshift of zero entropy.
%These maps can easily be constructed within a full family of $S$-unimodal maps with flat critical point

%Since $\omega(c)$ can be uniquely ergodic with positive entropy,
%the assumption on entropy in Theorem C (i) is not vacuous.
%The construction of Hahn and Katznelson \cite{HahKat67} of uniquely ergodic symbolic sequences 
%with positive entropy implies that such a map can be constructed
%within a one-parameter transversal family of unimodal maps.
%\begin{cor}
%There exists a topologically exact $S$-unimodal map with non-recurrent flat critical point 
%for which there is a minimizer with positive entropy which is not an acip.
%In addition, such a map without acip exists.
%\end{cor}

To illustrate our main results, consider a parametrized family $\{f_b\}_{b>0}$  
of unimodal maps given by 
\begin{equation}\label{fb}f_b(x)=\begin{cases}
-2^{2^b}\left|x-1/2\right|^{\left|x-1/2\right|^{-b}}+1&\text{ for }x\in[0,1]\setminus\{1/2\};\\
1&\text{ for }x=1/2.
\end{cases}\end{equation}
The $1/2$ is a flat critical point of polynomial order.
% The Schwarzian derivative $Sf_b$ of $f_b$ is given by
%$$Sf_b(x)=-\dfrac{  |x -1/2|^{-2(b+1)} }{2 (b\log |x-1/2|-1)^2}  \left(  (b\log |x-1/2|-1)^4  + |x-1/2|^{2b} g_{b} (x) \right),$$  
%where
%$$g_b(x) = b^2 (b^2-1)\log^2 |x-1/2 | - 2b (2b^2-1)\log |x-1/2| + 6b^2 - 1.$$
A tedious computation shows that $f_b$ has negative Schwarzian derivative, for example,
 for every $b\geq1/\sqrt{6}$.
Note that $f_b(0)=0=f_b(1)$. The Minimum Principle \cite{dMevSt93} implies $Df_b(0)>1$.
Then, from Singer's Theorem \cite{Sin78}
all periodic points are hyperbolic repelling. %{\bf Why is $f_b$ transitive?}
Hence $f_b$ is topologically conjugate to the full tent map and so is topologically exact.
By Theorem A, the LDP holds.
Since $\int\log|Df_b(x)|dx>-\infty$ holds if and only if $b<1$, $f_b$ has an acip (denoted by $\mu_{{\rm ac},b}$) if and only if $b<1$.
The Lebesgue typical behavior changes at $b=1$:
\begin{itemize}
\item[-] for $1/\sqrt{6}\leq b<1$, the measure 
$\delta_{x,b}^n=(1/n)\sum_{i=0}^{n-1}\delta_{f_b^{i}(x)}$
converges weakly as $n\to\infty$ to $\mu_{{\rm ac},b}$ for Lebesgue a.e. $x\in X$;
 
\item[-] for $b\geq 1$, 
$\delta_{x,b}^n$ converges weakly as $n\to\infty$ to the Dirac measure $\delta_0$ at $0$
 for Lebesgue a.e. $x\in X$. %(See Lemma \ref{accumulation}).
\end{itemize}

Theorem B and Theorem C together yield a complete characterization of the zeros of the rate function for $f_b$:

\begin{itemize}
\item[-] for $1/\sqrt{6}\leq b<1$, $\mathscr{I}(f_b;\mu)=0$ if and only if there exists
$p\in[0,1]$ such that $\mu=p\delta_0+(1-p)\mu_{{\rm ac},b}$;
 
\item[-] for $b\geq 1$, $\mathscr{I}(f_b;\mu)=0$ if and only if $\mu=\delta_0$. 
\end{itemize}

Let $\phi\colon X\to\mathbb R$ be a continuous function such that $c_\phi<d_\phi$ holds for every $f_b$, $b>0$.
We obtain a complete characterization of the zeros 
of $q_\phi$:

 \begin{itemize}
\item[-] for $1/\sqrt{6}\leq b<1$, 
$q_\phi(s)=0$ holds if and only if
there exists
$p\in[0,1]$ such that $s=p\phi(0)+(1-p)\int\phi d\mu_{{\rm ac},b}$;

\item[-] for $b\geq 1$,
$q_\phi(s)=0$ holds if and only if 
$s=\phi(0)$.  \end{itemize}

In this way, the structure of the set of zeros of the rate function changes at $b=1$.
This type of qualitative changes is well-known in the context of probability and statistical mechanics,
notably in the large deviations for the Curie-Weiss model (See e.g. Ellis \cite{Ell85}, Rassoul-Agha and Sepp\"al\"ainen \cite{RasSep}).

For each $b\in[1/\sqrt{6},1)$ let $p_b^+\in X$ denote the orientation-reversing fixed point of $f_b$, and 
$p_b^-$ the preimage of $p_b^+$ by $f_b$ which is not $p_b^+$.
The first return map to the interval $(p_b^-,p_b^+)$ defines an inducing scheme to which the acip 
$\mu_{{\rm ac},b}$ lifts. From the result of Zweim\"uller \cite{Zwe04}, this inducing scheme has polynomial tail with 
respect to the Lebesgue measure, uniformly over all $b$ contained in each compact subinterval 
of $[1/\sqrt{6},1)$.
Then, the result of Freitas and Todd \cite{FreTod09} on statistical stability implies that
$b\in[1/\sqrt{6},1)\mapsto\mu_{{\rm ac},b}\in\mathcal M$ is continuous (continuous in the $L^1$ norm). 
In Proposition \ref{converge} we show that $\mu_{{\rm ac},b}$ converges weakly to $\delta_0$ as $b\nearrow1$.
As a consequence, the set of zeros of the rate function for $f_b$
 depends continuously on $b>0$
(See FIGURE 1).
This type of changes in the rate functions also occur for the Manneville-Pomeau maps. For details,
see Appendix B. %Flat critical points behave like neutral fixed points.

\begin{figure}
\begin{center}
\includegraphics[height=5cm,width=6.5cm]{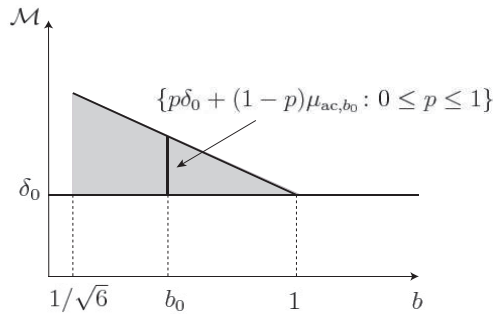}
\caption{The sets of zeros of the rate functions for the family $\{f_b\}_{b>0}$.}
\end{center}
\end{figure}

We point out 
one key difference between non-flat and flat critical points appearing
 in the rate functions.
Let $f$ be an $S$-unimodal map with a critical point $c$. We say $c$ is {\it non-flat} if 
there exist a constant $\ell_c>1$ and 
$C^3$ diffeomorphisms $\phi$, $\psi$ of $\mathbb R$ such that
$\phi(c)=0=\psi(f(c))$ and $|\phi(x)|^{\ell_c}=\psi(f(x))$ for all $x$ near $c$.
An $S$-unimodal map $f$ with a non-flat critical point $c$ satisfies {\it the Collet-Eckmann condition} \cite{ColEck83} if
$$\liminf_{n\to\infty}\frac{1}{n}\log|Df^n(f(c))|>0.$$
This condition implies the existence of an acip \cite{dMevSt93}. 
This measure is unique and also denoted by $\mu_{\rm ac}$.
For a topologically exact $S$-unimodal map with non-flat critical point and satisfying the Collet-Eckmann
condition \cite{ColEck83}, the LDP holds \cite{ChuRivTak}
and the corresponding rate function vanishes only at the acip (See Appendix A for details).

\begin{thm}{\rm (Keller and Nowicki \cite[Theorem 1.2]{KelNow92})}\label{kelnow}
Let $f\colon X\to X$ be an $S$-unimodal map with non-flat critical point satisfying the Collet-Eckmann condition.
Let $\phi\colon X\to\mathbb R$ be of bounded variation such that
$\sigma_\phi^2>0$, 
 where $$\sigma_\phi^2={\rm Var}(\phi)+2\sum_{n=1}^\infty{\rm Cov}(\phi,\phi\circ f^n)$$
 and {\rm Var} and {\rm Cov} denote variance and covariance respectively. 
Then, for sufficiently small $\epsilon>0$ we have
\begin{equation*}\label{kel}
\lim_{n\to\infty}\frac{1}{n}\log \left|\left\{ x\in X\colon \left |\frac{1}{n}S_n\phi(x) - \int \phi d\mu_{\rm ac}\right| > \epsilon   \right\}\right|<0.
\end{equation*}
\end{thm}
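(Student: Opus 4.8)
The plan is to reduce to an induced uniformly expanding Markov map carrying an exponentially decaying return-time tail, and then run the standard spectral/thermodynamic machinery. As a first step one uses that an $S$-unimodal map with non-flat critical point satisfying the Collet--Eckmann condition admits a nice interval $Y$ and an induced map $F=f^{R}$ on $Y$ (with $R\colon Y\to\mathbb N$ the return time) which is uniformly expanding, has a Markov structure, and satisfies $|\{x\in Y\colon R(x)>n\}|\le Ce^{-\gamma n}$ for some $C,\gamma>0$; this is one of the classical consequences of the Collet--Eckmann condition. The acip $\mu_{\rm ac}$ is then the projection of the $F$-invariant absolutely continuous probability measure $\mu_Y$ on $Y$, rescaled by $\int R\,d\mu_Y<\infty$ (Abramov/Kac).

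Second, for a real parameter $s$ near $0$ introduce the weighted transfer operator of $F$,
$$ (\mathcal L_{s}g)(x)=\sum_{F(y)=x}\frac{e^{\,s\,(S_R\phi)(y)}}{|DF(y)|}\,g(y),\qquad (S_R\phi)(y)=\sum_{i=0}^{R(y)-1}\phi(f^{i}(y)), $$
acting on functions of bounded variation on $Y$. A Lasota--Yorke inequality, in which the exponential tail of $R$ is used precisely to keep the branchwise $\mathrm{BV}$-norms of the weights $e^{s(S_R\phi)}$ summable for $|s|$ small, shows that for $|s|<s_{0}$ the operator $\mathcal L_s$ has a simple leading eigenvalue, isolated from the rest of its spectrum, depending real-analytically on $s$ and equal to $1$ at $s=0$. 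Defining the free-energy function $c(s)$ implicitly by the requirement that the leading eigenvalue of the operator with weight $e^{s(S_R\phi)-c(s)R}/|DF|$ equals $1$, one obtains a real-analytic $c$ near $0$ with $c(0)=0$, $c'(0)=\int\phi\,d\mu_{\rm ac}=:m$, and, by a standard second-order perturbation computation (using Abramov's formula to de-induce), $c''(0)=\sigma_\phi^2$. The hypothesis $\sigma_\phi^2>0$ thus makes $c$ \emph{strictly} convex near $0$.

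Third, comparing $S_n\phi$ for $f$ with sums over return blocks and applying a Chebyshev inequality together with the transfer-operator bound, and optimising over $s$, yields
$$ \limsup_{n\to\infty}\frac1n\log\bigl|\{x\in X\colon\tfrac1n S_n\phi(x)\ge t\}\bigr|\le -c^{*}(t),\qquad c^{*}(t)=\sup_{|s|<s_0}(st-c(s)), $$
and the symmetric bound for $\{\frac1n S_n\phi\le t\}$. Since $c$ is strictly convex with $c'(0)=m$ one has $c^{*}(t)>0$ for $t\neq m$; choosing $\epsilon>0$ small enough that $m\pm\epsilon$ lie in the range where $c^{*}$ is given by the above supremum (this is where ``sufficiently small $\epsilon$'' enters) and splitting $\{|\frac1n S_n\phi-m|>\epsilon\}$ into its two one-sided pieces gives $\limsup_n\frac1n\log|\{|\frac1n S_n\phi-m|>\epsilon\}|\le-\min\{c^{*}(m+\epsilon),c^{*}(m-\epsilon)\}<0$. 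Existence of the limit then follows from the matching lower bound, obtained from the large deviation lower bound for the induced Markov map together with Kac's formula, or more softly from sub-multiplicativity of the cumulant generating functions. The main obstacle is the second step: verifying the Lasota--Yorke inequality and the analyticity of $\mathcal L_s$ uniformly for $s$ in a neighbourhood of $0$, and identifying $c''(0)$ with $\sigma_\phi^2$; this hinges on the exponential tail of $R$ dominating the exponential weight $e^{s(S_R\phi)}$, which is exactly why the estimate is local in $\phi$ and forces $s$ --- hence, after Legendre duality, $\epsilon$ --- to be small.

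Finally, for \emph{continuous} $\phi$ the argument can be shortcut: by Theorem A applied to the (topologically exact restriction of the) Collet--Eckmann map, together with the fact recorded in Appendix A that the associated rate function $\mathscr I$ vanishes only at $\mu_{\rm ac}$, the Contraction Principle (the Corollary following Theorem A) gives $q_\phi(s)=0$ if and only if $s=m$; applying the limit formula of that Corollary to the intervals $[m+\epsilon,d_\phi]$ and $[c_\phi,m-\epsilon]$ --- legitimate once $\epsilon$ is small enough that both meet $(c_\phi,d_\phi)$ --- and adding the two Lebesgue measures yields the conclusion. Passing from continuous to merely bounded-variation $\phi$ is not automatic, so the transfer-operator route above seems necessary for the theorem in the stated generality.
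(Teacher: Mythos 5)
This theorem is quoted from Keller and Nowicki \cite[Theorem 1.2]{KelNow92}; the present paper does not give a proof of it, so there is no internal argument to compare your sketch against. Your first three paragraphs outline the standard transfer-operator route (inducing with an exponential return-time tail, a Lasota--Yorke inequality on~BV for the weighted operator, analytic perturbation of the leading eigenvalue, and Legendre duality), and that is broadly the mechanism behind the Keller--Nowicki result. As a sketch it is reasonable, though the two technical claims you defer --- the Lasota--Yorke estimate uniformly for $|s|<s_0$ and the identification $c''(0)=\sigma_\phi^2$ after de-inducing by Abramov's formula --- are exactly where the substance of \cite{KelNow92} lies, so nothing is being checked there.

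Your fourth paragraph, however, is circular within the logic of this paper, and also misattributes the relevant LDP. Theorem~A.1 of Appendix~A (that $\mathscr{I}$ vanishes only at $\mu_{\rm ac}$ for a topologically exact Collet--Eckmann map) is \emph{proved} in the paper by invoking Theorem~\ref{kelnow}; you cannot then turn around and deduce Theorem~\ref{kelnow} from Theorem~A.1 together with the Contraction-Principle corollary, even for continuous $\phi$. Moreover, the LDP for topologically exact $S$-unimodal maps with a non-flat critical point is the result of \cite{ChuRivTak}, not Theorem~A of this paper: Theorem~A concerns maps with a non-recurrent \emph{flat} critical point and does not apply to a Collet--Eckmann map. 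You correctly flag that the shortcut would not reach bounded-variation $\phi$, but the problem is more basic than a loss of generality: the shortcut would not yield the theorem at all without circular reasoning, so the transfer-operator argument is not merely preferable but necessary.
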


The exponential convergence in Theorem \ref{kelnow} no
longer holds for a map in Theorem A having an acip.
Indeed, for such a map $f$,
by Theorem B
the rate function vanishes at each post-critical measure.
Let $\mu\in\mathcal M(f)$ be a post-critical measure and $\phi\colon X\to\mathbb R$
be continuous such that $\int\phi\mu_{\rm ac}\neq\int\phi d\mu$.
Since the rate function is convex, $q_\phi(s)=0$ holds
for every $s\in\{p\int\phi d\mu+(1-p)\int\phi d\mu_{\rm ac}\colon p\in[0,1]\}$.
From this and the Corollary there exists
$\epsilon>0$ such that
$$\lim_{n\to\infty}\frac{1}{n}\log \left|\left\{x\in X\colon \left |\frac{1}{n}S_n\phi(x) - \int\phi d\mu_{\rm ac}\right| > \epsilon  \right\}\right|=0.$$

The rest of this paper consists of three sections and two appendices.
Sect.2 and Sect.3 are entirely dedicated to a proof of the upper bound \eqref{up}.
In Sect.2 we introduce inducing schemes and a specification-like property associated with them.
 We then show that this property is indeed satisfied for the map in Theorem A and
   for an inducing scheme obtained from the first return map to a
properly chosen small interval containing the flat critical point.
Building on these, in Sect.3 we construct good horseshoes %(See Proposition \ref{upper0})
and complete a proof of the upper bound in Theorem A.
Theorem B and Theorem C are proved in Sect.4.
In Appendix A we analyze the rate function of a map satisfying the Collet-Eckmann condition.
In Appendix B we treat the LDP for intermittent maps of the interval.

\section{Preliminaries for upper bound}
In this section we do preliminary works for obtaining the upper bound.
In Sect.\ref{ischeme} we introduce inducing schemes and describe their basic properties.
In Sect.\ref{special} we construct an inducing scheme with the specification-like property.

\subsection{Inducing schemes}\label{ischeme}
Let $f$ be a unimodal map. 
Let $U$ be an interval of $X$ and $n\ge 1$ an integer.
Each connected component of $f^{-n}(U)$ is called a {\it pull-back} of $U$ by $f^n$.
A pull-back $J$ of $U$ by $f^n$ is {\it diffeomorphic} if $f^n: J\to U$ is a diffeomorphism.
An open subinterval $I$ of $X$ is {\it nice} if $f^n(\partial I)\cap I=\emptyset$ holds for every $n\geq1$.

Assume the critical point $c$ of $f$ is non-recurrent.
Let $I$ be a nice interval which contains $c$
and satisfies $\overline{\{f^n(c)\colon n\geq1\}}\cap\overline{ I}=\emptyset$.
Diffeomorphic pull-backs of $I$ are mutually disjoint, and
every pull-back of $I$ is diffeomorphic. 
If $W$ is a pull-back of $I$, 
the integer $r\geq1$
such that $f^r(W)=I$ is unique.
 This $r=r(W)$ is called {\it an inducing time of $W$}.
%Note that the inducing time is not necessarily the first return time to $I$. 
The pull-back $W$ of $I$ is {\it primitive} if 
$f^k(W)\cap I=\emptyset$ holds for every $k\in\{0,\ldots, r-1\}\setminus\{0\}$.

{\it The first return time to $I$} is a function $R\colon X\to \mathbb Z_{>0}\cup\{\infty\}$ 
defined by
$$R(x)=\inf\left(\{n\geq1\colon f^n(x)\in I\}\cup\{\infty\}\right).$$
 If $W$ is a primitive pull-back of $I$, then $R$ is constant on $W$ and this common value is denoted by $R(W)$.
Let $\mathcal W$ denote the collection of all primitive pull-backs of $I$ which are contained in $I$.
%$R: \mathcal W \to \mathbb Z_{>0}$ 
The triplet $(I,\mathcal{W},R)$ is called
{\it an inducing scheme}.
Define {\it an induced map} $\widehat f:\cup_{J\in\mathcal W} J\to I$ by $\widehat f(x)=f^{R(J_x)}(x)$
where $J_x$ is the element of $\mathcal W$ containing $x$.
 
Let $f$ be a topologically transitive $S$-unimodal map with non-recurrent critical point.
Then any inducing scheme $(I,\mathcal{W},R)$ satisfies the following properties:
\medskip

\noindent{\underline{\it Expansion property:}}
there exist $\lambda>1$ and an integer $m\geq1$ such that 
\begin{equation*}
|D(\widehat f)^m(x)|\geq\lambda\quad \text{for all $x\in \bigcap_{n=0}^{m-1} (\widehat f)^{-n}\left(\bigcup_{J\in\mathcal W} J\right)$}.\end{equation*}

\noindent{\underline{\it Bounded distortion:}} there exists $C>0$ such that
for all  $x, y$ which are contained in the same element of $\mathcal W$,
$$\log\frac{|D\widehat f(x)|}{ |D\widehat f(y)|}\leq C|\widehat f(x)-\widehat f(y)|.$$

 \subsubsection*{\underline{Liftability}}
Consider the dynamical system
 on $\bigcap_{n\geq0}({\widehat f})^{-n}\left(\bigcup_{J\in\mathcal W} J\right)$ generated by $\widehat f$, and
 let $\mathcal M(\widehat f)$ denote the set of $\widehat f$-invariant Borel probability measures.
For a measure $\widehat\mu\in\mathcal M(\widehat f)$ 
for which $\int Rd\widehat\mu$ is finite,
define $$\mathcal L(\widehat\mu)=\frac{1}{\int R d\widehat\mu}\sum_{J\in\mathcal W}\sum_{n=0}^{R(J)-1}(f^n)_*(\widehat \mu|_{J}).$$
It is straightforward to check that 
$\mathcal L(\widehat\mu)\in\mathcal M(f)$. 
A measure $\mu\in\mathcal M(f)$ is {\it liftable to the inducing scheme $(I,\mathcal W,R)$} if
there exists $\widehat\mu\in\mathcal M(\widehat f)$ such that $\int R d\widehat\mu$ is finite and $\mathcal L(\widehat\mu)=\mu$.
Not all measures are liftable. For instance measures whose supports are contained in $\omega(c)$
are not liftable.

\begin{lemma}\label{liftlem}
Let $f$ be a topologically transitive $S$-unimodal map with non-recurrent critical point,
and let  $(I,\mathcal W,R)$ be an inducing scheme.
If $\mu\in\mathcal M(f)$ and $\mu(I)>0$, then $\mu$ is liftable to $(I,\mathcal W,R)$.
\end{lemma}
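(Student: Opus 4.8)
The plan is to produce the lifted measure via a Kac-type construction using the first return time $R$ to $I$. Since $\mu \in \mathcal M(f)$ and $\mu(I) > 0$, the set $I$ has positive measure and we may consider the conditional measure $\mu_I = \mu|_I / \mu(I)$ on $I$. By the Poincaré recurrence theorem applied to $(f, \mu)$, for $\mu$-a.e.\ $x \in I$ the orbit returns to $I$ infinitely often, so $R < \infty$ $\mu_I$-a.e.\ and the first-return map $F_I\colon I \to I$, $F_I(x) = f^{R(x)}(x)$, is defined $\mu_I$-a.e. The first step is to recall the classical fact that $\mu_I$ is $F_I$-invariant and that Kac's formula $\int_I R \, d\mu_I = 1/\mu(I)$ holds; in particular $\int R\, d\mu_I < \infty$. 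The second step is to relate $F_I$ to the induced map $\widehat f$ of the inducing scheme: the elements of $\mathcal W$ are exactly the primitive pull-backs of $I$ contained in $I$, and on each such $J$ one has $F_I|_J = f^{R(J)}|_J = \widehat f|_J$; since $\bigcup_{J \in \mathcal W} J$ has full $\mu_I$-measure (it is precisely the set of points of $I$ that return to $I$, up to the finitely-branched exceptional set of pull-back boundaries which, $I$ being nice and $c$ non-recurrent, is at most countable, hence $\mu$-null once one checks $\mu$ has no atoms on it — or more simply, these boundary points are not in the interior of any $J$ but the return set still has full measure), we conclude $\widehat\mu := \mu_I \in \mathcal M(\widehat f)$ with $\int R\, d\widehat\mu$ finite.

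The third step is to verify $\mathcal L(\widehat\mu) = \mu$. By definition
\[
\mathcal L(\widehat\mu) = \frac{1}{\int R\, d\widehat\mu} \sum_{J \in \mathcal W} \sum_{n=0}^{R(J)-1} (f^n)_*(\widehat\mu|_J) = \mu(I) \sum_{J \in \mathcal W} \sum_{n=0}^{R(J)-1} (f^n)_*(\mu|_J),
\]
using Kac's formula for the normalizing constant. One then tests this against a Borel set $A \subseteq X$: the right-hand side assigns to $A$ the value $\mu(I) \sum_{J,\,n} \mu(J \cap f^{-n}A)$. The sets $\{x \in I : R(x) = R(J), x \in J, f^n(x) \in A\}$ as $J$ ranges over $\mathcal W$ and $0 \le n < R(J)$ partition (mod $\mu$) the set $\{(x,n) : x \in I,\ 0 \le n < R_I(x),\ f^n(x) \in A\}$, and the standard Kac tower / Kakutani skyscraper identity for the first-return decomposition gives $\sum_{J,n} \mu(J \cap f^{-n}A) = \mu(A)/\mu(I)$. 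This is exactly the computation that proves $\mu$ is the suspension of $\mu_I$ under the return-time function; it is routine once the partition-of-the-tower picture is in place. Combining, $\mathcal L(\widehat\mu)(A) = \mu(A)$, so $\mu = \mathcal L(\widehat\mu)$ and $\mu$ is liftable.

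The main obstacle is not conceptual — this is the classical correspondence between an invariant measure and its first-return system — but bookkeeping: one must be careful that the elements of $\mathcal W$ genuinely exhaust, mod $\mu$, the portion of $I$ whose points return to $I$. The subtlety is that a pull-back of $I$ inside $I$ need not be \emph{primitive} (its orbit could re-enter $I$ before time $R$), but every first return is realized on a \emph{primitive} pull-back by definition of $R(W)$ for primitive $W$; the non-primitive pull-backs are sub-intervals of iterates and carry no extra first-return structure. One should also confirm that the countable union $\bigcup_{J\in\mathcal W} J$ differs from $\{x \in I : R(x) < \infty\}$ only by the (countable, hence $\mu$-null provided $\mu$ charges no pull-back boundary point — and a point charged by an $f$-invariant $\mu$ would be periodic, hence by topological exactness repelling, contradicting invariance of positive mass at a repelling periodic orbit unless it is the whole support, a case easily excluded since $\mu(I)>0$ and $c \notin \omega(c)$) set of pull-back endpoints. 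Granting these standard measure-theoretic checks, the construction goes through and yields $\widehat\mu$ with $\mathcal L(\widehat\mu) = \mu$.
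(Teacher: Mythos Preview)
Your approach is correct and is essentially the one the paper takes: both use Kac's theorem to get $\int R\,d\mu_I<\infty$ and then identify $\mu$ with the tower (suspension) over its first-return system on $I$. The paper simply outsources the verification that $\mathcal L(\mu_I)=\mu$ to Zweim\"uller's general liftability theorem \cite{Zwe05}, whereas you carry out the Kakutani--skyscraper computation by hand; the content is the same.

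One remark: your discussion of atoms on pull-back boundaries is both muddled (an invariant measure certainly \emph{can} charge a repelling periodic orbit---$\delta_p$ does) and unnecessary. Because $I$ is nice, any point $x\in\partial J\cap I$ with $J\in\mathcal W$ satisfies $f^{R(J)}(x)\in\partial I$, and $f^n(\partial I)\cap I=\emptyset$ for all $n\ge1$; combined with primitivity of $J$ this gives $R(x)=\infty$. Hence $\{x\in I:R(x)<\infty\}=\bigcup_{J\in\mathcal W}J$ exactly, with no exceptional set, and Poincar\'e recurrence alone gives $\mu_I\bigl(\bigcup_{J\in\mathcal W}J\bigr)=1$.
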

\begin{proof}
Since $R$ is the first return time to $I$, 
if $\mu\in\mathcal M(f)$ and $\mu(I)>0$, then $\int Rd\mu$ is finite (in fact, equal to $1$, see Kac \cite{Kac47}).
 From the result of Zweim\"uller \cite{Zwe05}, $\mu$ is liftable.
\end{proof}

\noindent 
Moreover, $\overline{I}=\overline{\bigcup_{J\in\mathcal W}J}$ holds.
For maps with non-flat critical point, these are known as a folklore, and 
they also hold for maps with flat critical point.

\begin{lemma}\label{positive}
Let $f$ be an $S$-unimodal map with non-recurrent critical point.
 Then $\chi(\mu)>0$ holds for every $\mu\in\mathcal M(f)$.
\end{lemma}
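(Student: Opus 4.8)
The plan is to prove that $\chi(\mu) = \int \log|Df|\,d\mu > 0$ for every $f$-invariant Borel probability measure $\mu$, under the standing assumptions that $f$ is $S$-unimodal with non-recurrent critical point. First recall from Bruin--Keller \cite{BruKel98} (as quoted earlier, with a proof not using non-flatness) that $\chi(\mu)\ge 0$ for every $\mu\in\mathcal M(f)$ once all periodic points are hyperbolic repelling, which holds here by Singer's Theorem since the Schwarzian is negative and, $f$ being topologically transitive, there are no attracting or neutral cycles. So the content is to rule out $\chi(\mu)=0$. The natural dichotomy is according to whether $\mu$ gives positive mass to a nice neighborhood $I$ of $c$ or not.

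\smallskip

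\textbf{Case 1: $\mu(I)>0$ for a nice interval $I\ni c$ with $\overline{\{f^n(c):n\ge1\}}\cap\overline I=\emptyset$.} By Lemma \ref{liftlem}, $\mu$ lifts to the inducing scheme $(I,\mathcal W,R)$: there is $\widehat\mu\in\mathcal M(\widehat f)$ with $\int R\,d\widehat\mu<\infty$ and $\mathcal L(\widehat\mu)=\mu$. The Abramov formula relates the exponents: $\chi(\mu)\cdot\int R\,d\widehat\mu=\int\log|D\widehat f|\,d\widehat\mu=\chi(\widehat f;\widehat\mu)$ (this uses that $\log|Df|$ is $\mu$-integrable, which follows because the orbit segments being glued avoid a neighborhood of $c$ except possibly at the endpoint of each inducing block, and $\int R\,d\widehat\mu<\infty$; the only place $|Df|$ is small is near $c$, and primitivity controls how often that happens). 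By the Expansion property of the inducing scheme, $|D(\widehat f)^m|\ge\lambda>1$ on the relevant set, so iterating and using the chain rule together with $\widehat f$-invariance of $\widehat\mu$ gives $\int\log|D(\widehat f)^m|\,d\widehat\mu\ge\log\lambda$, hence $m\int\log|D\widehat f|\,d\widehat\mu\ge\log\lambda>0$. Therefore $\chi(\widehat f;\widehat\mu)>0$ and so $\chi(\mu)>0$.

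\smallskip

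\textbf{Case 2: $\mu(I)=0$ for every such nice $I$.} Then $\mathrm{supp}\,\mu$ is disjoint from a neighborhood of $c$, so $f$ is uniformly $C^1$, indeed $C^3$, on a neighborhood of $\mathrm{supp}\,\mu$, and $\log|Df|$ is continuous and bounded there. Here one invokes classical expansion away from the critical point: a topologically transitive $S$-unimodal map with non-recurrent critical point is uniformly expanding on any compact forward-invariant set bounded away from $c$ — this is the Ma\~n\'e hyperbolicity theorem (see \cite{dMevSt93}), whose proof does not use non-flatness since it only concerns the dynamics away from $c$ where $f$ is a genuine $C^3$ (indeed $C^2$ with negative Schwarzian) expanding map on hyperbolic repelling periodic orbits' basins. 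Thus there are $K\ge1$ and $\kappa>1$ with $|Df^K(x)|\ge\kappa$ for all $x$ in a neighborhood of $\mathrm{supp}\,\mu$; integrating against $\mu$ and using invariance gives $K\chi(\mu)=\int\log|Df^K|\,d\mu\ge\log\kappa>0$.

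\smallskip

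The main obstacle I anticipate is a careful treatment of integrability of $\log|Df|$ in Case 1 — near a flat critical point $|Df|$ decays faster than any polynomial, so the standard non-flat argument that bounds $-\log|Df^n(f(c))|$ by $O(n)$ is unavailable, and one must instead lean on the structure of the inducing scheme (bounded distortion, primitivity, finiteness of $\int R\,d\widehat\mu$) to ensure $\int|\log|Df||\,d\mu<\infty$ before Abramov's formula can be applied; the actual positivity is then immediate from the Expansion property, and Case 2 is entirely classical. One should also note the degenerate case $\mu=\delta_p$ for a (repelling) fixed point $p$, which falls under Case 2 and gives $\chi(\delta_p)=\log|Df(p)|>0$ directly.
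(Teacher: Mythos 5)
Your two-case structure (split according to whether $\mu$ charges a nice interval $I$ around $c$) matches the paper's, but the treatment of the liftable case is genuinely different: the paper, after reducing to ergodic measures by ergodic decomposition, simply cites \cite[Theorem 3]{BruTod09} to get $\chi(\mu)>0$ for liftable measures, whereas you attempt a self-contained proof via the Abramov-type identity and the Expansion property of the induced map. Your Case 2 is also a small variant of the paper's — the paper invokes Birkhoff's ergodic theorem along an ergodic $\mu$ supported away from $c$ and then passes to general $\mu$ by ergodic decomposition, while you integrate the uniform pointwise bound $|Df^K|\ge\kappa$ on $\mathrm{supp}\,\mu$ directly, which works for any invariant $\mu$ and so dispenses with the decomposition step. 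Both of your cases are correct in spirit; the trade-off is that the Bruin--Todd citation makes the paper's argument two lines long, while yours is longer but doesn't outsource the main work.

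The gap you flag yourself in Case 1 is real and should be closed rather than left to intuition about ``primitivity controls how often that happens.'' Here is a clean way to do it. First, $\log^+|Df|$ is uniformly bounded on $X$, so $\log^+|D\widehat f(x)|\le R(x)\cdot\log\|Df\|$ and hence $\int\log^+|D\widehat f|\,d\widehat\mu\le\log\|Df\|\cdot\int R\,d\widehat\mu<\infty$; thus $\int\log|D\widehat f|\,d\widehat\mu$ is well-defined in $[-\infty,\infty)$. Next, from the Expansion property, pointwise one has $\sum_{i=0}^{m-1}\log^-|D\widehat f(\widehat f^i x)|\le\sum_{i=0}^{m-1}\log^+|D\widehat f(\widehat f^i x)|-\log\lambda$; integrating and using $\widehat f$-invariance (valid for the nonnegative functions $\log^\pm$) gives $m\int\log^-|D\widehat f|\,d\widehat\mu\le m\int\log^+|D\widehat f|\,d\widehat\mu-\log\lambda<\infty$, so $\log|D\widehat f|\in L^1(\widehat\mu)$ and $\int\log|D\widehat f|\,d\widehat\mu\ge\log\lambda/m>0$. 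Finally, the interchange of sum and integral needed for $\chi(\mu)\cdot\int R\,d\widehat\mu=\int\log|D\widehat f|\,d\widehat\mu$ is obtained by applying Tonelli separately to $\log^+|Df|$ and $\log^-|Df|$ in the defining formula for $\mathcal L(\widehat\mu)$; the $\log^+$ part is finite as above, so the difference is well-defined and the identity holds in $[-\infty,\infty)$. With that, your argument is complete, and in fact it essentially reproves the relevant content of \cite[Theorem 3]{BruTod09} in this setting.
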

\begin{proof}
From Ma\~n\'e's hyperbolicity theorem \cite[Theorem A]{Man85}
and Birkhoff's ergodic theorem,
$\chi(\mu)>0$ holds for every ergodic $\mu\in\mathcal M(f)$ whose support 
does not contain $c$.
Let $\mu\in\mathcal M(f)$ be ergodic whose support contains $c$.
Poincar\'e's recurrence theorem implies $\mu(I)>0$, and hence $\mu$ is liftable. From \cite[Theorem 3]{BruTod09}, $\chi(\mu)>0$ holds.
From the ergodic decomposition theorem, the positivity also holds
for non-ergodic measures.
\end{proof}

\subsection{Specification-like property of the inducing scheme}\label{special}
Specification (See e.g. Young \cite{You90} for the definition) allows us to glue arbitrary orbit segments
together to form one orbit. By the specification-like property of an inducing scheme
$(I,\mathcal W,R)$ we roughly mean a property which allows us to glue orbits 
of part of the tail set $\{R>n\}=\{x\in I\colon R(x)>n\}$
to the nice interval $I$ to form a pull-back of $I$ whose first return time is roughly equal to $n$.
We additionally request that the size of this pull-back is not too small.
The next proposition asserts the existence of an inducing scheme with the specification-like property.

\begin{prop} \label{nse}
Let $f$ be a topologically transitive $S$-unimodal map with non-recurrent flat critical point
that is of polynomial order. 
There exists an inducing scheme $(I,\mathcal W,R)$ with the following property:
for every $\varepsilon >0$ there exist $C(\varepsilon)>0$ and $n_0\geq1$ such that
for every integer $n\geq n_0$ and every connected component $A$ of $\{R>n\}$,
there exists $J\in\mathcal W$ which is contained in $A$ and satisfies 
\begin{equation*}\label{NS}
n<R(J)\leq(1+\varepsilon)n\quad\text{ and}\quad | J|\geq\frac{C(\varepsilon)}{n}|A|.
\end{equation*}
\end{prop}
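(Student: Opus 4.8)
The plan is to construct the nice interval $I$ carefully around the flat critical point $c$ and then analyze the geometry of the connected components of $\{R>n\}$. A connected component $A$ of $\{R>n\}$ is itself a pull-back of $I$ whose first $n$ iterates avoid $I$, so $A$ is a diffeomorphic pull-back: there is an integer $k\le n$ and a diffeomorphism $f^{k}\colon A\to I$. The point $f^{k}(A)$'s boundary behavior together with the structure of first-return domains means that $A$ sits inside a first-return domain $A'$ with $R(A')=k$ (or $A'=I$ when $k=0$), and the branch $f^{k}\colon A'\to I$ carries $A$ onto one of the sets $f^{k}(A)$ that the first-return map has not yet ``captured'' — i.e.\ $f^{k}(A)$ is a connected component of $\{R>n-k\}$ containing $c$ or not. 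The key point is that the worst case is when $f^{k}(A)$ is the central component containing the flat critical point, because near $c$ the branches of $\widehat f$ accumulate and become exponentially thin. So the heart of the matter reduces to the following local statement: for the interval $\widehat A$ which is the connected component of $\{R>m\}$ containing $c$, and for $m$ large, there is a first-return domain $J\subset\widehat A$ with $m<R(J)\le(1+\varepsilon)m$ and $|J|\ge (C(\varepsilon)/m)|\widehat A|$; then one pulls this back by the bounded-distortion diffeomorphism $f^{k}$ to get the claim for general $A$, picking up only a uniform distortion constant in the size estimate and an additive $k\le n$ term in the return time (which is why one needs $R(J)\le(1+\varepsilon)n$ rather than something like $R(J)\le 2n$; here I would choose $I$ so small that $k$ is a small fraction of $n$, using that points take a long time to return once they are near $c$ — this is exactly where non-recurrence and Lemma~\ref{positive} enter to guarantee a definite expansion outside a neighbourhood of $c$).

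To prove the local statement, I would pass through the coordinates provided by assumptions (i)--(ii) on the flat critical point. Near $c$ one has $|\xi(x)|^{\ell(x)}=\eta(f(c)+\text{stuff})$, i.e.\ $f$ looks like $x\mapsto |x-c|^{\ell(x)}$ up to diffeomorphisms, with $\ell(x)=|x-c|^{-v(x)}$ and $v(c)>0$. The central component $\widehat A$ of $\{R>m\}$ is, by the nice/Markov structure, the preimage under the central branch of the component of $\{R>m'\}$ around $f(c)$'s landing region for an appropriate $m'$ close to $m$; and the relevant geometry is that the first-return branches near $c$ partition a one-sided neighbourhood of $c$ into intervals $I_j$ on which $R\equiv R_j$ with $R_j\to\infty$ monotonically, the $I_j$ shrinking toward $c$. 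I want to show two things: (a) consecutive inducing times satisfy $R_{j+1}/R_j\to1$, so that among $\{j\colon R_j>m\}$ I can find one with $R_j\le(1+\varepsilon)m$; and (b) the chosen $I_j$ is not too small relative to $\widehat A=\bigcup_{R_k>m}I_k\cup(\text{central piece})$. For (b) the point is that because the critical order is \emph{polynomial} (this is precisely the ``technical reason'' the authors flagged for restricting to polynomial/logarithmic order), the sequence $|I_j|$ decays like a regularly-varying/roughly geometric-in-$\log$ sequence, so the first few terms of the tail $\bigcup_{R_k>m}I_k$ dominate its total length up to a factor polynomial in $m$ — giving the $C(\varepsilon)/m$ factor. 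Concretely I expect $|I_j|\asymp$ something like $\exp(-m^{1+o(1)})$ with $m\sim R_j$, so that $|I_j|/|\widehat A|$ is bounded below by a negative power of $m$; the clean way to organize this is to estimate $|I_j|$ and $|\widehat A|$ via the derivative of the inverse branch of $f^{R_j}$ and the mean value theorem, using bounded distortion of the induced map on each $I_j$ and the explicit local form of $f$ near $c$ to compute how $|x-c|$ relates to $R(x)$ (roughly, $R(x)$ grows polynomially in $|\log|x-c||$, from $\ell(x)=|x-c|^{-v(x)}$ together with the expansion estimate on the number of iterates needed to escape a neighbourhood of $c$).

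The main obstacle, I expect, is establishing the quantitative tail geometry in (a)--(b) — i.e.\ controlling the ratios $R_{j+1}/R_j$ and $|I_j|/|I_{j+1}|$, and hence $|I_j|/|\widehat A|$, with the specific polynomial critical order. This is the analogue of Lemma~\ref{keyest} referenced in the text (``we use the assumption on the flatness of the critical point solely for constructing this inducing scheme, see Lemma~\ref{keyest}''), and it is where the flatness genuinely bites: for a non-flat critical point one would have $|I_j|$ decaying geometrically with $j$ and $R_j$ growing linearly in $j$, so the estimates are routine; for a flat point the branches cluster much faster and one needs the precise rate, which is tractable only because ``polynomial order'' makes $\ell$, hence the escape-time function, a regularly varying function of $|\log|x-c||$. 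A secondary technical point is choosing $I$ nice \emph{and} small enough simultaneously: nice intervals containing $c$ form a basis at $c$ (standard for non-recurrent $c$, since one can take $I$ bounded by preimages of a repelling periodic point), so I would fix a small nice $I$ with $\overline{I}\cap\overline{\{f^n(c)\colon n\ge1\}}=\emptyset$ and with diameter small enough that (using Lemma~\ref{positive} and a Koebe/Mañé argument) the first-return time from anywhere in $I$ is at least some large $N_0$, which makes the additive error $k$ in the return-time estimate absorbable into the $\varepsilon n$ slack. Once these pieces are in place, assembling the proposition is bookkeeping: pull the local domain $J\subset\widehat A$ back through the distortion-controlled branch $f^{k}\colon A\to I$ identifying $\widehat A$ with a component of $\{R>n-k\}$, and read off $n<R(J)\le(1+\varepsilon)n$ and $|J|\ge (C(\varepsilon)/n)|A|$.
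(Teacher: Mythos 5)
Your high-level plan matches the paper's in two respects: you correctly identify that the crux is a local estimate near $c$ (the analogue of the paper's Lemma~\ref{keyest}, Step~3 of the proof), and you correctly see that the polynomial order of the critical point is what makes the quantitative tail geometry of the inducing scheme tractable. But there are two substantive gaps.

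First, the ``reduction to the central component'' does not work as you describe. If $A$ is a component of $\{R>n\}$ with $c\notin A$, then $A$ is sandwiched by two elements $L_1,L_2\in\mathcal W$ with $R(L_1),R(L_2)\le n$; the branch $f^{R(L_2)}$ extends diffeomorphically over $A$, but $f^{R(L_2)}(A)$ lies \emph{outside} $I$ (since $R>n\ge R(L_2)$ on $A$), so $A$ is not a diffeomorphic pull-back of $I$ and $f^{k}(A)$ is not a component of $\{R>n-k\}$ for any $k$. Consequently the non-central case cannot be pushed forward onto a central component of a tail set. The paper handles $c\notin A$ in a completely different and simpler way: by construction, $f^{R(L_2)}(A)$ contains one of finitely many fixed intervals $V_i^{\pm}$ (chosen in Step~1--2), whose pull-back into $A$ by $f^{R(L_2)}$ gives the required $J$ with a \emph{uniform} constant $K_\tau\min\{|V_i^-|,|V_i^+|\}$, no $1/n$ loss at all.

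Second, your treatment of the local statement silently assumes that $R$ is monotone near $c$ — that a one-sided neighbourhood of $c$ is partitioned into intervals $I_j$ on which $R\equiv R_j$ with $R_j\nearrow\infty$. The paper explicitly warns (Remark after Proposition~\ref{nse}) that this monotonicity \emph{fails in general}, even when the critical point is pre-periodic, and that the monotone case would already follow from Zweim\"uller's work. The whole point of Step~2 — constructing the auxiliary sequences $\{V_i^\pm\}$, $\{J_k^+\}$, $\{R_k\}$ via Lemma~\ref{lemmore}, where $R_k$ is made non-decreasing by interspersing ``filler'' intervals $J_k^+\notin\mathcal W$ and appealing to Lemma~\ref{LemJK} to show the genuine first-return domains still account for a definite fraction of each $I_{\hat n}\setminus I_{\hat n'}$ — is to repair exactly this failure of monotonicity. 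Without that machinery (or something equivalent), the step where you pick ``the first few terms of the tail'' and claim they dominate the total length is unjustified. Your instinct that $R_{j+1}/R_j\to1$ is correct, but the paper proves the stronger additive bound $0\le R_{k+1}-R_k\le R(V_0^+)$, and it is the combination of this with Lemma~\ref{keyest} (the definite drop $|I_{[(1+\varepsilon)k]}^\pm|/|I_k^\pm|<1-\rho$) that yields the $C(\varepsilon)/n$ factor by a pigeonhole over roughly $\varepsilon n/R(V_0^+)$ intervals — the decay of $|I_j|$ by itself is not enough.

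A smaller point: the proposition claims $|J|\geq C(\varepsilon)|A|/n$ with a $1/n$ factor, not merely a negative power of $n$; for the downstream application (Proposition~\ref{upper0}) a polynomial loss would indeed suffice, but as stated your estimate is weaker than what is claimed.
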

We will use Proposition \ref{nse} in the proof of Proposition \ref{upper0} to 
construct a horseshoe whose branches are pull-backs of $I$ with a common inducing time.

%To prove Proposition \ref{nse} 
%we need to analyze how the values of $R$ are distributed in $I$.
%This is concerned with delicate estimates on the sizes of elements of $\mathcal W$
%relative to the distance to $c$.

%The upper bound follows from the argument in \cite[Theorem 3]{Chu11}. However, some generalities in \cite{Chu11} are unnecessary and a proof becomes cleaner and shorter in our particular %case. For readers' convenience, we give a proof of the upper bound here.

\begin{remark}
In the case where $R$ is monotone (i.e., $R(x)\to\infty$ monotonically as $x\to c$), the set $\{R>n\}$
is connected. Then the estimates in 
Proposition \ref{nse}
follow from the result of Zweim\"uller \cite[Proposition 1]{Zwe04}.
However, the monotonicity does not hold in general, even when the critical point is pre-periodic.
\end{remark}

   \begin{proof}[Proof of Proposition \ref{nse}]
   We use the following notation. 
   %For a point $x^+\in X$ with $x^+>c$ 
  % (resp.  $x^-\in X$ with $x^-<c$)
 % let $x^-$ (resp. $x^+$) denote the point in $X\setminus\{x^+\}$ (resp. $X\setminus\{x^-\}$)
%such that $f(x^-)=f(x^+)$.
%For two subsets $A$, $B$ of $X$, $A<B$ indicates $\sup A<\inf B$.
%Given a subset $A^+$ of $X$ with $A^+>\{c\}$ (resp. a subset $A^-$ of $X$ $A^-<\{c\})$
%define $A^-=\{x^-\in X\colon x^+\in A^+\}$
%(resp. $A^+=\{x^+\in X\colon x^-\in A^+\}$
%). 
For a point $x^+\in X$ with $x^+>c$ 
  let $x^-$ denote the point in $X\setminus\{x^+\}$ 
such that $f(x^-)=f(x^+)$.
For two subsets $A$, $B$ of $X$, $A<B$ indicates $\sup A\leq\inf B$.
Given a subset $A^+$ of $X$ with $A^+>\{c\}$ 
define $A^-=\{x^-\in X\colon x^+\in A^+\}$.

   The proof of Proposition \ref{nse} consists of four steps.
   We start by choosing an inducing scheme, and then
   analyze the distribution of the first return time.
  In these two steps the assumption on the order of the flat critical point is never used.
  In the third step, we prove a key estimate (Lemma \ref{keyest}) associated with the inducing scheme.
  In the last step, we combine the analysis on the distribution of the first return time with the key estimate
  and establish the specification-like property.
  \medskip
    
\noindent{\it Step 1: \underline{Choice of inducing scheme.}}  
Since $f$ is topologically transitive and has a periodic point of odd period different from $1$, it is topologically mixing
\cite[Theorem 2.20]{Rue17}. Hence
       $c$ is accumulated by periodic points from both sides.
There exist a nice interval $I=(a_0^-,a_0^+)$ 
       which satisfies $\overline{I}\cap\overline{\{f^n(c)\colon n\geq1\}}=\emptyset$, and
         $f^{R_0}(a_0^-)=a_0^-$ or  $f^{R_0}(a_0^+)=a_0^+$ where $R_0=\min\{n\geq1\colon f^n(I)\cap I\neq\emptyset\}$.     Without loss of generality we may assume $f^{R_0}(a_0^+)=a_0^+$.
%  Fix $\tau>0$ such that $\overline{\{f^n(c)\colon n\geq1\}}$ does not intersect 
%  the concentric closed interval with $I$ of length $|I|(1+2\tau)$.
Let $\mathcal W$ denote the collection of primitive pull-backs of $I$ and $R$ the first return time to $I$.
In what follows we show that the inducing scheme $(I,\mathcal W,R)$ satisfies the desired properties.
\medskip

\begin{lemma}\label{lemmore}
 %Under the assumptions of Proposition \ref{inducep},
% Let $(I,\mathcal W,R)$ be an inducing scheme and
%Let $f\colon X\to X$ be a unimodal map satisfying the Misiurewicz condition and
 %$B$ a nice interval such that $f^n(c)\notin\overline{B}$ holds for every $n\geq1$. 
Let $W_1$, $W_2$ be distinct primitive pull-backs of $I$ such that $R(W_1)=R(W_2)$.
There exists a primitive pull-back $W$ of $I$ such that $W_1<W<W_2$ and $R(W)<R(W_1)$.
\end{lemma}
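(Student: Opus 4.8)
The plan is to exploit the order structure of pull-backs of the nice interval $I$ together with the combinatorics forced by $R(W_1)=R(W_2)$. Let $r=R(W_1)=R(W_2)$ and suppose without loss of generality $W_1<W_2$; since $W_1,W_2$ are primitive pull-backs of $I$ contained in $I$, they are disjoint subintervals of $I=(a_0^-,a_0^+)$, and the gap between them, together with the portion of $I$ lying strictly between $\sup W_1$ and $\inf W_2$, contains the endpoint structure we want to track. The key observation is that $f^r$ maps $W_1$ and $W_2$ each diffeomorphically onto $I$, so on the interval $G$ spanned by $W_1\cup W_2$ the map $f^r$ is defined and continuous, and $f^r(G)\supseteq I$ since $f^r$ already covers $I$ from each of the two disjoint subintervals $W_1,W_2$.

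**Producing the new pull-back.** First I would argue that there is a point $z$ strictly between $W_1$ and $W_2$ whose orbit under $f$ re-enters $I$ at some time $k$ with $0<k<r$: otherwise $f^r$ would be defined and smooth on all of $G$, hence (being monotone on each of $W_1,W_2$ with image $I$) would be non-monotone on $G$, forcing a critical point of $f^j$ in the interior of $G$ for some $j<r$, i.e. a point mapping to $c$; but $c\in I$, so that is exactly a return to $I$ before time $r$. In either case we obtain $k\in\{1,\dots,r-1\}$ and a point between $W_1$ and $W_2$ landing in $I$ at time $k$. Now take the connected component $\widetilde W$ of $f^{-k}(I)$ containing such a point; it is a pull-back of $I$ sitting strictly between $W_1$ and $W_2$ (it cannot meet $W_1$ or $W_2$, since those are mapped into $I$ only at time $r>k$, contradicting primitivity if they met $\widetilde W$). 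Inside $\widetilde W$ the point $c$ has a preimage (using that $f^k(\widetilde W)=I\ni c$ and that every pull-back of $I$ is diffeomorphic onto $I$), and the primitive pull-back $W\subseteq\widetilde W$ of $I$ — the component of $f^{-R}(I)$ on which $R$ first returns — satisfies $W_1<W<W_2$ and $R(W)\le k<r=R(W_1)$. One has to check $R(W)\ge 1$, which is immediate, and that $W$ is genuinely a primitive pull-back contained in $I$, which follows because $\widetilde W\subseteq I$ (as it lies between $W_1,W_2\subseteq I$ and $I$ is an interval).

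**The main obstacle.** The delicate point is the dichotomy argument showing that the region strictly between $W_1$ and $W_2$ must contain a pre-image of $c$ at time $<r$. The clean way is to use that $f^r$ restricted to $W_1$ and to $W_2$ has the \emph{same} orientation data is irrelevant; what matters is that $f^r$ is a diffeomorphism onto $I$ from each, so $f^r(\partial W_1)$ and $f^r(\partial W_2)$ both equal $\{a_0^-,a_0^+\}$. If $f^r$ extended smoothly and monotonically over the whole span $G$ it would be injective there, contradicting that it takes the value $a_0^+$ (say) at an endpoint of $W_1$ and again at an endpoint of $W_2$. Hence $f^r|_G$ is not injective, so $f^j$ has a turning point in the interior of $G$ for some minimal $j\le r$; minimality plus $c\in I$ forces $j<r$ and gives the intermediate return. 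I expect verifying that this turning point really yields a \emph{primitive} pull-back strictly separating $W_1$ and $W_2$ — rather than one nested inside $W_1$ or $W_2$, which primitivity of $W_1,W_2$ rules out — to be the step needing the most care, and the non-recurrence hypothesis (so that all pull-backs of $I$ are diffeomorphic and mutually well-behaved) is exactly what makes it go through.
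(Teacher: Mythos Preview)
Your approach is essentially the paper's: use that $f^r$ cannot be injective on the span $U$ of $W_1\cup W_2$, hence some iterate $f^n$ with $n<r$ sends a point of $U$ to $c\in I$, and extract from this a primitive pull-back of $I$ strictly between $W_1$ and $W_2$ with return time $<r$. The paper streamlines the extraction by taking the \emph{minimal} $n\ge 1$ with $c\in f^n(U)$ and observing that then $f^k(U)\cap I=\emptyset$ for every $1\le k<n$, so the pull-back of $I$ by $f^n$ inside $U$ is already primitive --- no passage through an auxiliary $\widetilde W$ is needed.

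Two small corrections to your write-up. First, the lemma does not assume $W_1,W_2\subset I$; in the paper it is applied to the pull-backs $V_i^+$, which lie \emph{outside} $I$. Your final check that ``$W$ is contained in $I$'' is therefore neither required by the statement nor available from the hypotheses, and should simply be dropped. Second, the inclusion you write as $W\subseteq\widetilde W$ goes the other way: if $z\in\widetilde W$ has $R(z)\le k$, then $f^{R(z)}(\widetilde W)$ is a pull-back of $I$ meeting $I$, hence (by niceness) contained in $I$, which forces $\widetilde W\subset W$. This does not break your argument --- since primitive pull-backs with different return times are disjoint, $W$ is still disjoint from $W_1,W_2$ and hence trapped strictly between them --- but the inclusion as written is false.
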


\begin{proof}
Put $m=R(W_1)=R(W_2)$.
Let $U$ denote the minimal open interval containing $W_1$ and $W_2$.
Let $n\geq1$ be the smallest integer such that $c\in f^n(U)$. We must have $n<m$. 
Since $f^k(W_1\cup W_2)\cap I=\emptyset$
for every $k\in\{1,2,\ldots,n\}$,  $f^k(U)\cap I=\emptyset$
for every $k\in\{1,2,\ldots,n-1\}$.
Since $W_1$, $W_2$ are primitive, $f^n(W_1\cup W_2)\cap I=\emptyset$.
Define $W$ to be the pull-back of $I$ by $f^n$ which is contained in $U$. 
\end{proof}

\noindent{\it Step 2: \underline{Analysis of distribution of the first return time.}}  
Let 
$U$ be a subset of $X$. A subset $W$ of $U$ is {\it the minimal pull-back of $I$ in $U$} 
if it is a primitive pull-back of $I$ and for any other primitive pull-back $W'$ of $I$ which is contained in $U$,
  $R(W')> R(W)$ holds.
 % Put 
 % $$P=\{f^n(a_0^+)\colon n\geq1, f^n(a_0^+)\notin\partial I\}.$$
 % This is a finite set not intersecting $\overline{ I}$.
%  Put
%$$\eta=\min\{|x-y|\colon x\in P,\  y\in\partial I\}.$$

Define $V_0^+$ to be the minimal pull-back of $I$ in $(a_0^+,\inf\{f^n(c)\colon n\geq1, f^n(c)>c\})$.
%If $f^{R_0}(c)<c$, then define $V_0^-$ to be the minimal pull-back of $I$ in 
%$(f^{R_0}(c),a_0^-)$.
We construct by induction a finite sequence $V_0^+$, $V_1^+,\ldots$ of primitive pull-backs of $I$ as follows.
    Define $V_0^+$ to be the minimal pull-back of $I$ in $(a_0^+,\inf\{f^n(c)\colon n\geq1, f^n(c)>c\})$.
 Let $i\geq0$ be an integer and suppose $V_0^+,\ldots,V_i^+$ has been defined.
From Lemma \ref{lemmore}
 there are two cases: 
 \smallskip
 
 (i) for any primitive pull-back $W$ of $I$ such that $V_i^+<W$,
$R(V_i^+)<R(W)$; 

(ii) there exists a primitive pull-back $W$ of $I$ such that $V_i^+<W$ and $R(V_i^+)>R(W)$.
\smallskip

In case (i) we stop the construction. In case (ii) 
write $V_i^+=(a,b)$ and define $V_{i+1}^+$ to be the minimal pull-back of $I$ 
in $(b,1)$. 
This construction stops in finite time and we end up with
a sequence $\{V_i^+\}_{i=0}^{N}$ of primitive pull-backs of $I$
with the following properties:

\begin{itemize}
\item[1.] $I<V_0^+<V_1^+<\cdots<V_N^+$; 

\item[2.] $R(V_0^+)>R(V_1^+)>\cdots>R(V_{N}^+)=1$;

\item[3.] if $\partial V_0^+\cap\partial I=\emptyset$ and  $W$ is a primitive pull-back of $I$ such that
 $I<W<V_0^+$, then $R(W)>R(V_0^+)$;

\item[4.] If $W$ is a primitive pull-back of $I$ such that $V_{i}^+<W<V_{i+1}^+$ holds
for some $i\in\{0,\ldots, N-1\}$, then $R(W)> R(V_{i}^+)$. 
\end{itemize}
\begin{remark}\label{chebyshev}
{\rm In the case $f(c)=1$ and $f(0)=0=f(1)$ like the Chebyshev quadratic $f(x)=4x(1-x)$, 
we choose $I=(a_0^-,a_0^+)$ where $a_0^+$ is the fixed point which is not $0$.
Let $b^-<c$ be such that $f(b^-)=a_0^-$. We have
$V_0^-=(b^-,a_0^-)$, $N=0$ and
 $R_k=k+2$ for every $k\geq0$.
In particular, for every $k\geq0$, $J_k^\pm$ are primitive pull-backs of $I$ with $R(J_k^\pm)=k+2$.}
\end{remark}

We define
a sequence $\{J_k^+\}_{k\geq0}$ of open subintervals of $I$,
 a sequence $\{R_k\}_{k\geq0}$ of positive integers inductively as follows.
Define $J_0^+$ to be the pull-back of $I$ by $f^{R_0}$ which is contained in $I$ and 
satisfies $\{c\}<J_0^+$. 
Now, let $k\geq0$ and suppose we have defined
$J_0^+,J_1^+,\ldots,J_{k}^+$ and $R_0,R_1,\ldots,R_k$
with the desired properties % and a sequence
%$W_0,W_1,\ldots,W_k\in\{V_{i}^+\}_{0\leq i\leq N}\cup\{V_{i}^-\}_{0\leq i\leq N}$ of pull-backs of $B$
such that the set
$ f^{R_k}\left(I\setminus(\bigcup_{0\leq n\leq k}  J_n^+)\right)$
does not intersect $I$. Since this set contains $V_0^+$ or $V_0^-$
depending on whether $f^{R_k}(c)>c$ or $f^{R_k}(c)<c$,
 the minimal pull-back of $I$ in this set, denoted by
$W$, belongs to $\{V_{i}^-\}_{0\leq i\leq N}\cup\{V_{i}^+\}_{0\leq i\leq N}$.
Let $J$ denote the pull-back of $I$ by $f^{R_k+R(W)}$ which is 
contained in $I\setminus(\bigcup_{0\leq n\leq k} J_n^-\cup J_n^+)$ and
satisfies $\{c\}<J$. 
If $\partial J_k^+\cap\partial J\neq\emptyset$, then set $J_{k+1}^+=J$
and $R_{k+1}=R_k+R(W)$.
Otherwise, set $J_{k+2}^+=J$ and
define $J_{k+1}^+$ to be the maximal open interval sandwiched by $J_{k}^+$ and $J_{k+2}^+$.
Set $R_{i}=R_{k}+R(W)$ for $i\in\{k+1,k+2\}$.

From the definition the following holds:

\begin{itemize}

\item[1.] $\{c\}<\cdots<J_{k+1}^+<J_k^+<\cdots<J_0^+$;

%\item[3.] %$R_0>1$ and
%$0\leq R_{k+1}-R_k\leq q$ for every $k\geq0$ and $R_k\to\infty$ as $k\to\infty$;

%\item[3.] $f^{R_k}(\widehat J_k)\geq\eta$.

\item[2.] 
for each $k\geq0$ one of the following holds:

\begin{itemize}
\item[(i)] $J_{k}^+\in \mathcal W$ 
and $R(J_k^+)=R_k$;

\item[(ii)]  $J_{k}^+\notin \mathcal W$,
 $I\cap f^i(J_{k}^+)=\emptyset$ for every $i\in\{1,\ldots,R_{k}\}$,
$\partial I\cap\partial f^{R_{k}}(J_{k}^+)\neq\emptyset$;

\end{itemize}

\item[3.] If $J_{k+1}^+\notin\mathcal W$, then $J_{k}^+\in\mathcal W$
and $R_{k}=R_{k+1}$.
\end{itemize}
It follows that for every $k\geq0$,
\begin{equation}\label{R1}0\leq R_{k+1}-R_{k}\leq R(V_0^+)\quad\text{and}\quad
\frac{k}{2}\leq R_k-R_0\leq R(V_0^+)k.\end{equation}

In order to control distortions,
fix $\tau\in(0,1)$ such that %$\tau|I|\leq\eta$,
 $\overline{\{f^n(c)\colon n\geq1\}}$ does not intersect the concentric closed interval with $I$ of length $(1+2\tau)|I|$,
and for each $k\geq0$
there exists a subinterval of $X$ on which
$f^{R_k}$ is a diffeomorphism and the image
 contains the concentric open interval
with $f^{R_k}(J_k^+\cup J_{k+1}^+)$ of length $(1+2\tau)|f^{R_k}(J_k^+\cup J_{k+1}^+)|$.
The latter condition is realized for sufficiently small $\tau$ because
 $\overline{\{f^n(c)\colon n\geq1\}}$ does not intersect 
$\bigcup_{i=0}^NV_i^-\cup I\cup\bigcup_{i=0}^N V_i^+$.
Put $K_\tau=(\tau/(1+\tau))^2$.

\begin{lemma}\label{LemJK}
There are constants $0<\theta_0 <\theta_1<1$ such that  for every $k\geq0$,
\begin{equation*}\label{disJK}
\theta_0\leq\frac{ | J_k^+ | }{ |J_k^+ \cup J_{k+1}^+ | }\leq \theta_1.\end{equation*}
\end{lemma}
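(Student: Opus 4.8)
The statement to be proved, Lemma~\ref{LemJK}, asserts uniform two-sided bounds on the ratio $|J_k^+|/|J_k^+\cup J_{k+1}^+|$. The key point is that $J_k^+$ and $J_{k+1}^+$ are adjacent intervals both of which map under a single iterate $f^{R_k}$ onto ``large'' sets: by construction, $f^{R_k}(J_k^+)$ is either $I$ itself (case 2(i)) or an interval with one endpoint on $\partial I$ (case 2(ii)), and $f^{R_k}(J_{k+1}^+)$ is related to $I$ via one of the finitely many pull-backs $V_i^\pm$. So the plan is: (a) transfer the problem to the $f^{R_k}$-images, where the ratio is bounded away from $0$ and $1$ by a constant depending only on the finite family $\{V_i^\pm\}_{0\le i\le N}$ and on $|I|$; then (b) pull the bounds back through $f^{R_k}$ using the Koebe distortion lemma, which applies uniformly in $k$ precisely because of the choice of $\tau$ guaranteeing the definite-size Koebe space around $f^{R_k}(J_k^+\cup J_{k+1}^+)$, with distortion controlled by $K_\tau$.

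\textbf{Step 1: bounds on the images.} First I would observe that $f^{R_k}(J_k^+\cup J_{k+1}^+)$ is an interval inside $X$ all of whose relevant pieces are comparable to $|I|$. Indeed $f^{R_k}(J_k^+)$ has length either $|I|$ or at least $\min_i|f^{R(V_i^\pm)}(\cdot)|$-type lower bound, and $f^{R_k}(J_{k+1}^+)$ has length at least $\min\{|V_i^\pm|\colon 0\le i\le N\}$ (it contains, or is, a pull-back of $I$ by a fixed iterate from the finite list, up to the primitivity bookkeeping in case~3). Since there are only finitely many $V_i^\pm$, both lengths lie in a fixed interval $[\rho_0,\rho_1]\subset(0,\infty)$ independent of $k$; hence
\[
\frac{\rho_0}{\rho_0+\rho_1}\le \frac{|f^{R_k}(J_k^+)|}{|f^{R_k}(J_k^+\cup J_{k+1}^+)|}\le \frac{\rho_1}{\rho_0+\rho_1}.
\]

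\textbf{Step 2: pull back via Koebe.} By the choice of $\tau$, for each $k$ there is an interval $T_k\supset J_k^+\cup J_{k+1}^+$ on which $f^{R_k}$ is a diffeomorphism and $f^{R_k}(T_k)$ contains the $(1+2\tau)$-scaled concentric enlargement of $f^{R_k}(J_k^+\cup J_{k+1}^+)$; since $f$ has negative Schwarzian, the Koebe lemma gives that the distortion of $f^{R_k}$ on $J_k^+\cup J_{k+1}^+$ is bounded by a constant depending only on $\tau$ (and $K_\tau$). Therefore the ratio $|J_k^+|/|J_k^+\cup J_{k+1}^+|$ differs from the corresponding ratio of images by a bounded multiplicative factor, uniformly in $k$. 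Combining with Step~1 yields constants $0<\theta_0<\theta_1<1$ as required.

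\textbf{Main obstacle.} The delicate point is Step~1 in case~2(ii), when $J_k^+\notin\mathcal W$: there the interval $f^{R_k}(J_k^+)$ only touches $\partial I$ and need not equal $I$, and one must check — using property~3, which forces $J_k^+\in\mathcal W$ whenever $J_{k+1}^+\notin\mathcal W$, and the primitivity in the construction — that its length is still bounded below independently of $k$, and likewise that $f^{R_k}(J_{k+1}^+)$ (which by the inductive construction is, or contains, the pull-back of $I$ realizing one of the finitely many $R(V_i^\pm)$) has length bounded below. This is where the finiteness of $N$ and the uniform Koebe space genuinely enter; once these length bounds are in hand, the distortion argument is routine.
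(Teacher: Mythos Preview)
Your approach is correct and essentially the same as the paper's: bound the image lengths $|f^{R_k}(J_k^+)|$ and $|f^{R_k}(J_{k+1}^+)|$ from below using the finite data $\{I,\,V_i^\pm,\,\Delta\}$, then pull back via Koebe with the fixed space $\tau$; the paper carries out the three cases $(J_k^+\in\mathcal W,\,J_{k+1}^+\in\mathcal W)$, $(J_k^+\in\mathcal W,\,J_{k+1}^+\notin\mathcal W)$, $(J_k^+\notin\mathcal W,\,J_{k+1}^+\in\mathcal W)$ explicitly, but your sketch captures this, including the delicate case $J_k^+\notin\mathcal W$. One small slip in Step~2: ``differs by a bounded multiplicative factor'' alone does not yield $\theta_1<1$ (since $C\cdot r$ can exceed $1$ even when $r<1$); the paper instead applies the one-sided Koebe inequality $|A|/|J|\ge K_\tau\,|f^{R_k}(A)|/|f^{R_k}(J)|$ to bound \emph{both} $|J_k^+|/|J_k^+\cup J_{k+1}^+|$ and $|J_{k+1}^+|/|J_k^+\cup J_{k+1}^+|$ from below, and then sets $\theta_1=1-\bigl(\text{lower bound on the latter}\bigr)$.
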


\begin{proof}
Put $$\Delta=\begin{cases}\inf\{|x-y|\colon x\in I, y\in V_0^-\cup V_0^+\}&\text{if $\partial I\cap \partial V_0^+=\emptyset$;}\\
\min\{|V_0^-|,|V_0^+|\}&\text{otherwise.}\end{cases}$$
We treat three cases separately.
If $J_k^+\in\mathcal W$ and $J_{k+1}^+\in\mathcal W$, then
$f^{R_k}(J_k^+)=I$ and $f^{R_k}(J_{k+1}^+)\in\{V_0^-,V_0^+\}$. 
By the choice of $\tau$ and the Koebe Principle  \cite[p.277 Theorem 1.2]{dMevSt93},
$$\dfrac{| J_k^+ |}{ |J_k^+\cup J_{k+1}^+| } \ge K_\tau \dfrac{| f^{R_k } (J_k^+) |}{ |f^{R_k } (J_k^+\cup J_{k+1}^+ )|  } 
\ge K_\tau |I|$$
and
$$\dfrac{| J_{k+1}^+ |}{ |J_k^+\cup J_{k+1}^+| } \ge K_\tau \dfrac{| f^{R_k } (J_{k+1}^+) |}{ |f^{R_k } (J_k^+\cup J_{k+1}^+ )|  } 
\ge K_\tau\Delta.$$
If $J_k^+\in\mathcal W$ and $J_{k+1}^+\notin\mathcal W$,
then
$f^{R_k}(J_k^+)=I$, there exists $i\in\{0,\ldots,N\}$ such that $f^{R_k}(J_{k+2}^+)\in\{V_i^-,V_i^+\}$ and
the interval $f^{R_k}(J_{k+1}^+)$ is the one sandwiched by $f^{R_k}(J_{k+2}^+)$ and $I$, 
and hence $|f^{R_k}(J_{k+1}^+)|\geq  \Delta.$
By the Koebe Principle again we obtain the same set of inequalities.
If $J_k^+\notin\mathcal W$ and $J_{k+1}^+\in\mathcal W$, then
there exists $i\in\{0,\ldots,N\}$ such that $f^{R_k}(J_{k+1}^+)\in\{V_i^-,V_i^+\}$,
and the interval $f^{R_k}(J_{k}^+)$ is the one sandwiched by $f^{R_k}(J_{k+1}^+)$ and $I$, 
and hence $|f^{R_k}(J_{k}^+)|\geq  \Delta.$
By the Koebe Principle,
$$\dfrac{| J_k^+ |}{ |J_k^+\cup J_{k+1}^+| } \ge K_\tau\dfrac{| f^{R_k } (J_k^+) |}{ |f^{R_k } (J_k^+\cup J_{k+1}^+ )|  } 
\ge K_\tau  \Delta$$
and
$$\dfrac{| J_{k+1}^+ |}{ |J_k^+\cup J_{k+1}^+| } \geq K_\tau \dfrac{| f^{R_k } (J_{k+1}^+) |}{ |f^{R_k } (J_k^+\cup J_{k+1}^+ )|  } 
\ge K_\tau \min\{|V_i^-|,|V_i^+|\}.$$
Put $\theta_0=K_\tau\min\{|V_0^-|,|I|,|V_0^+|,\Delta\}$
and $\theta_1=1-K_\tau\min_{1\leq i\leq N}\min\{|V_i^-|,|V_i^+|,\Delta\}$.
\end{proof}

\noindent{\it Step 3: \underline{Key estimates.}} 
For each $k\geq0$ put  $J_k^+=(a_{k+1}^+,a_k^+)$, 
$I_k^+=(c,a_k^+)$ and $I_k=(a_k^-,a_k^+)$.
To obtain the large deviation upper bound,
one key estimate is that of the speeds of decay of $|I_k^\pm|$ as $k\to\infty$.
For a non-flat critical point it is not hard to show that 
$$\limsup_{k\to\infty } \frac{|I_{k+1}^+ |}{|I_k^+ |}  < 1  \quad \text{ and } \quad
\limsup_{k\to\infty } \frac{|I_{k+1}^- |}{|I_k^- |}  < 1.$$ 
These estimates can be used to show that the inducing scheme 
is of {\it bounded slope},  which implies the desired upper bound \eqref{up}, see
\cite{Chu11}. 
Since these estimates no longer hold for a flat critical point,
we now prove slightly different estimates and work with them.

Let $[ $ $\cdot $ $]$ denote the integer part.
The next lemma is a key estimate for establishing the specification-like property of the inducing scheme
$(I,\mathcal W,R)$. 
As far as the proof of Theorem A is concerned, this is the only place where the order of flatness of the critical point
comes into play.
\begin{lemma}\label{keyest}
Assume the critical point is of polynomial order.
For every $\varepsilon >0$ we have
$$\limsup_{k\to\infty } \frac{|I_{[(1+\varepsilon)k]}^+ |}{|I_k^+ |}  < 1  \quad \text{ and } \quad
\limsup_{k\to\infty } \frac{|I_{[(1+\varepsilon)k]}^- |}{|I_k^- |}  < 1.$$ 
\end{lemma}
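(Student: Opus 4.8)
The plan is to turn the statement into a comparison of the return times $R_k$. Put $y_k=|f(I_k^+)|=|f(a_k^+)-f(c)|$, $L_k=\bigl|\log|I_k^+|\bigr|$ and $D_k=\log|Df^{R_k-1}(f(c))|$. Two estimates link $|I_k^+|$ to $R_k$. First, by the construction every point of $f(I_k^+)$ has its first $R_k-1$ forward iterates outside $I$ (this uses primitivity, resp.\ property~2(ii), of each $J_j^+$ with $j\ge k$, together with $R_j\ge R_k$), while $f^{R_k}(I_k^+)$ has length bounded away from $0$ and from $\infty$ (by the choice of $I$ and properties~2--3 of $\{J_k^+\}$, one endpoint of $f^{R_k}(I_k^+)$ lies in $\partial I$ and the other, $f^{R_k}(c)$, is a definite distance from $\overline I$); hence, by the negative Schwarzian derivative (Koebe principle) and Ma\~n\'e's hyperbolicity theorem \cite[Theorem A]{Man85}, applied to the compact forward-invariant set $\overline{\{f^n(c):n\ge1\}}$ --- which does not contain $c$ and whose periodic points are all hyperbolic repelling ---, the map $f^{R_k-1}$ on $f(I_k^+)$ has distortion bounded uniformly in $k$; this is the bounded-distortion mechanism of Sect.\ref{ischeme}. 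It follows that $\bigl|\,|\log y_k|-D_k\,\bigr|\le C_0$ for all $k$, and, again by Ma\~n\'e's theorem, $C_1\lambda_1^{\,n}\le|Df^n(f(c))|\le\beta^{\,n}$ with $\beta=\max_X|Df|<\infty$, so $D_k\asymp R_k$, whence $D_k\asymp k$ by \eqref{R1}. Second, by assumption~(ii) and the polynomial order $\eta(f(a_k^+))=|\xi(a_k^+)|^{\ell(a_k^+)}$ with $\ell(a_k^+)=|a_k^+-c|^{-v(a_k^+)}$; since $\xi,\eta$ are diffeomorphisms sending $c$, $f(c)$ to $0$ and $v$ is $C^1$ with $v(c)>0$, this gives $|\log y_k|=e^{\,v(a_k^+)L_k}L_k\,(1+o(1))$ as $k\to\infty$. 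Taking logarithms and combining the two estimates produces the key identity
\[
v(a_k^+)\,L_k=\log D_k-\log L_k+o(1)\qquad(k\to\infty),
\]
and, since $D_k\asymp k$, it forces $L_k\sim(\log k)/v(c)$; in particular $L_m/L_k\to1$ whenever $\log m\sim\log k$.

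The decisive point is a uniform lower bound on the growth of $R_k$. Following the construction of $\{R_k\}$ in Step~2 stage by stage, each stage advances the index by $1$ with a strictly positive increment, or by $2$ with increments $(\ge 1,\ 0)$; hence the increments $R_{j+1}-R_j$ are never zero for two consecutive $j$, so
\[
R_m-R_k\ \ge\ \lfloor (m-k)/2\rfloor\qquad(k\le m),
\]
the uniform refinement of the bound behind \eqref{R1}. With $m=[(1+\varepsilon)k]$ this gives $R_m-R_k\ge\varepsilon k/4$ for all large $k$, and, since $f^{R_k}(c)$ lies in the hyperbolic set,
\[
D_m-D_k=\log\bigl|Df^{\,R_m-R_k}\bigl(f^{R_k}(c)\bigr)\bigr|\ \ge\ \log C_1+(R_m-R_k)\log\lambda_1\ \ge\ \tfrac{\varepsilon}{8}\,k\log\lambda_1
\]
eventually. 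As $D_k\le\overline a\,k$ for a constant $\overline a$, there is a constant $\rho_\varepsilon>0$, independent of $k$, with $\log(D_m/D_k)\ge\log(1+\rho_\varepsilon)>0$ for all large $k$.

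To conclude I would subtract the key identity at the indices $m=[(1+\varepsilon)k]$ and $k$:
\[
v(a_m^+)L_m-v(a_k^+)L_k=\log\frac{D_m}{D_k}-\log\frac{L_m}{L_k}+o(1)\ \ge\ \log(1+\rho_\varepsilon)+o(1),
\]
using $L_m/L_k\to1$. Since $\bigl|v(a_m^+)-v(a_k^+)\bigr|\,L_k\le\mathrm{Lip}(v)\,|I_k^+|\,L_k\to0$ as $L_k\to\infty$ and $v(a_m^+)\to v(c)$, this forces $L_m-L_k\ge\delta_\varepsilon>0$ for all large $k$, i.e.\ $|I_{[(1+\varepsilon)k]}^+|\le e^{-\delta_\varepsilon}|I_k^+|$; this is the first assertion. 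The estimate for $I_k^-$ follows verbatim: because $f(a_k^-)=f(a_k^+)$ one has $f(I_k^-)=f(I_k^+)$, hence $|f(I_k^-)|=y_k$ and $f^{R_k}(I_k^-)=f^{R_k}(I_k^+)$, so the key identity holds with $L_k$ replaced by $\bigl|\log|I_k^-|\bigr|$ and the same argument applies.

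The hard part is the uniform bookkeeping in the first estimate. The coarse bound $D_k\asymp R_k\asymp k$ is not enough by itself: it pins down $|I_k^+|$ only up to a factor $k^{o(1)}$, which overwhelms the gain coming from the factor $1+\varepsilon$. One is forced to retain $D_k$ and to exploit that the \emph{difference} $D_m-D_k$ grows linearly in $k$ --- which is precisely why the uniform increment bound $R_m-R_k\ge\lfloor(m-k)/2\rfloor$, rather than merely \eqref{R1}, is needed --- and to use the order hypothesis in the sharp form $\log\ell(x)\sim v(c)\,\bigl|\log|x-c|\bigr|$, which yields the clean identity $v(a_k^+)L_k=\log D_k-\log L_k+o(1)$. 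This is the only place in the proof of Theorem~A where the order of flatness enters.
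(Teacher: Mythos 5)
Your proof is correct and follows the same two pillars as the paper's (Mañé's theorem plus bounded distortion giving $\log|I_k^+|\approx-\tfrac{1}{\ell(a_k^+)}\log|Df^{R_k}(f(c))|$, and the polynomial form $\ell(a_k^+)=|I_k^+|^{-v(a_k^+)}$), but the bookkeeping is genuinely different and, I think, cleaner. The paper works with the ratio $\log|I_{k'}^+|/\log|I_k^+|$, splits into two cases depending on whether that ratio exceeds $1+\varepsilon^2$, and in the second case attempts to bound $\ell(a_k^+)/\ell(a_{k'}^+)$ away from $1$; you instead take a second logarithm and arrive at the additive identity $v(a_k^+)L_k=\log D_k-\log L_k+o(1)$, which you then subtract at $k$ and $m=[(1+\varepsilon)k]$. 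This avoids the case split entirely and isolates the one quantitative input that actually drives the estimate: a uniform lower bound $\log D_m-\log D_k\ge\log(1+\rho_\varepsilon)$, which in turn rests on $R_m-R_k\ge\lfloor(m-k)/2\rfloor$ (your sharpening of \eqref{R1} to arbitrary base points $k$, not just $k=0$) combined with the exponential growth $|Df^{R_m-R_k}(f^{R_k}(c))|\ge C_1\lambda^{R_m-R_k}$ from Mañé and the linear upper bound $D_k\le\overline a\,k$. Your emphasis on this point is well placed: the published argument at the corresponding step only invokes the crude bound $D_{k'}\ge CD_k$ with $C\in(0,1)$, which by itself yields $\log D_{k'}/\log D_k\ge1-o(1)$ and would not produce the factor $1+\varepsilon$ appearing in \eqref{A1eq2} --- the displayed chain $\frac{\ell(a_k^+)}{\ell(a_{k'}^+)}\le(1+2\varepsilon^2)(1+\varepsilon^2)=\frac{(1+2\varepsilon^2)(1+\varepsilon^2)}{1+\varepsilon}$ has an equals sign that cannot be right as written --- so what you prove is precisely the missing step needed to justify the intended inequality. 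The remaining details you supply (the diffeomorphisms $\xi,\eta$ contributing only an $O(\ell(a_k^+))$ error which is dominated by $\ell(a_k^+)\log|I_k^+|$; $|v(a_m^+)-v(a_k^+)|L_k\le\mathrm{Lip}(v)|I_k^+|L_k\to0$; $f(I_k^-)=f(I_k^+)$ so the same $D_k$ serves both sides) are all correct.
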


\begin{proof}
From Ma\~n\'e's hyperbolicity theorem \cite[Theorem A]{Man85}
there are constants $C_1>0$ and $\lambda>1$ such that for every $n\geq1$ and every $x\in X$
such that $x,f(x),\ldots,f^{n-1}(x)\notin I$, 
\begin{equation}\label{mane}|Df^n(x) | \ge C_1\lambda^{n}.\end{equation}
Put $D_k=|Df^{R_k}(f(c))|$.
There exists $C>1$ such that for every $k\geq0$,
\begin{equation}\label{once}C^{-1}\leq|I_k^+|^{\ell(a_k^+)}D_k\leq C.\end{equation}
Put $k'=[(1+\varepsilon)k]$.

%\noindent {\it Case I: Logarithmic order. }
%Put $\lambda_0=\max_{x\in X}|Df(x)|$, $\zeta =\log\lambda / (8(1+\alpha)\log\lambda_0)$.
 %\eqref{once} and 
 %$\ell  (a_{k}^+)= - u(a_{k}^+) \log |  I_k^+  |^{\alpha}$ together imply 
%$$ - \left(u(a_{k}^+)^{-1}  \log (CD_k )\right)^{\frac{1}{1+\alpha}}   \leq \log|I_k^+| \leq
%- \left(u(a_{k}^+)^{-1}  \log (C^{-1} D_k)\right)^{\frac{1}{1+\alpha}}.$$
% This yields
%\begin{align*}
%\log\frac{\left|I_{k'}^+ \right|} {|I_{k}^+|} 
%%&\le   \sqrt{b(a_{k}^+)^{-1}  \log (CD_k )} - \sqrt{ b(a_{[(1+\varepsilon )k]}^+)^{-1}  \log (C^{-1} D_{[(1+\varepsilon )k]}}  \\
%&\leq   \left(  u(a_{k}^+)^{-1}  \log (C D_k )\right)^{\frac{1}{1+\alpha}}\left(1- \left( \frac{ u(a_{k}^+)}{ u(a_{k'}^+)} \cdot
 %\frac{ \log (C^{-1} D_{k'} )}{ \log (C D_k )}\right)^{\frac{1}{1+\alpha}}\right)  \\
%&\le    -\zeta\varepsilon \left(  u(a_{k}^+)^{-1}  \log (C D_k )\right)^{\frac{1}{1+\alpha}},
%\end{align*}
%which goes to $-\infty$
%as $k\to\infty.$
%The last inequality is because
%the first fraction in the square root is
%$ \geq e^{-(1+\alpha)\zeta\varepsilon}$
%by the continuity of $ u$ and $u(c)>0$, and the second fraction is
%$ \geq 1+\frac{\log\lambda}{2\log\lambda_0}\varepsilon=1+4(1+\alpha)\zeta\varepsilon$, and
%$1-e^{-\zeta\varepsilon }(1+4(1+\alpha)\zeta\varepsilon )^{1/(1+\alpha)}  \le  -\zeta\varepsilon$
%for every sufficiently large $k$.
%\medskip

Taking logs of \eqref{once} gives
\begin{equation}\label{A1eq-1}\log|I_k^+|\approx-\frac{\log D_k}{\ell(a_k^+)},\end{equation}
where $\approx$ indicates that the ratio of the two numbers goes to $1$ as $k\to\infty$.
Then
\begin{equation}\label{A1eq0}
\frac{\log|I_{k'}^+|}{\log|I_{k}^+|}\approx\frac{\log D_{k'}}{ \log D_k}\cdot\frac{\ell(a_k^+)}{\ell(a_{k'}^+)}.
\end{equation}
If $\log|I_{k'}^+|<(1+\varepsilon^2)\log|I_{k}^+|$, then
using \eqref{A1eq-1},
%\begin{equation}\label{A1eq1}
%\log\frac{|I_{k}^+|}{|I_{k'}^+|}>-\varepsilon^2\log|I_k^+|
%\geq\varepsilon^2(1-\varepsilon)\frac{\log D_k}{\ell(a_k^+)}.\end{equation}
\begin{equation}\label{A1eq1}
\frac{|I_{k'}^+|}{|I_{k}^+|}\leq\exp\left(\varepsilon^2\log|I_k^+|\right)
\leq\exp\left(-\varepsilon^2(1-\varepsilon)\frac{\log D_k}{\ell(a_k^+)}\right),\end{equation}
which goes to $0$ as $k\to\infty$ because $|I_k^+|\to0$ and $k/\ell(a_k^+)\to\infty$ from \eqref{A1eq-1}.
If  $\log|I_{k'}^+|\geq(1+\varepsilon^2)\log|I_{k}^+|$, then
from \eqref{A1eq0},
$$\frac{\log D_{k'}}{\log D_{k}}\cdot\frac{\ell(a_k^+)}{\ell(a_{k'}^+)}\leq 1+2\varepsilon^2.$$
From \eqref{mane} there exists $C\in(0,1)$ such that
$$D_{k'}=|Df^{R_{k'}-R_k}(f^{R_k+1}(c))||Df^{R_{k}}(f(c))|\geq CD_k.$$
We have
\begin{equation*}\frac{\log D_k}{\log D_{k'}}\leq \frac{\log D_k}{\log D_k+\log C}
\leq 1+\varepsilon^2.\end{equation*}
Hence
$$\frac{\ell(a_k^+)}{\ell(a_{k'}^+)}\leq(1+2\varepsilon^2) (1+\varepsilon^2)
=\frac{(1+2\varepsilon^2) (1+\varepsilon^2)}{1+\varepsilon}\leq\frac{1+4\varepsilon^2}{1+\varepsilon}.$$
Substituting $\ell(a_k^+)=|I_k^+|^{-v(a_k^+)}$ and
$\ell(a_{k'}^+)=|I_{k'}^+|^{-v(a_{k'}^+)}$
into this inequality gives
\begin{equation}\label{A1eq2}\frac{|I_{k'}^+|}{|I_{k}^+|}\leq
\left(\frac{1+4\varepsilon^2}{1+\varepsilon}\right)^{\frac{1}{\max\{v(x)\colon x\in X\}}}<1,\end{equation}
provided $\varepsilon\in(0,1)$ is sufficiently small.
From \eqref{A1eq1} and \eqref{A1eq2}
 we obtain the first inequality in Lemma \ref{keyest}.
A proof of the second one is analogous.
\end{proof}

\noindent{\it Step 4: \underline{Verification of the specification-like property.}} 
Let $\varepsilon >0$, $n>R(V_0^+)/\varepsilon$ an integer and let $A$ be a connected component of $\{R>n\}$.
We treat two cases separately.
\medskip

\noindent{\it Case I: \underline{$c\notin A$}.} 
Then $A$ is sandwiched by two elements $L_1$, $L_2$ of $\mathcal W$ with
$R(L_1)<R(L_2)\leq n$. Let $i\in\{0,\ldots,N\}$ be the maximal such that
$V_i^+\subset f^{R(L_2)}(A)$ or $V_i^-\subset f^{R(L_2)}(A)$.
Only one of the two inclusions holds and without loss of generality we may assume
$V_i^+\subset f^{R(L_2)}(A)$.
Let $J$ denote the pull-back of $V_i^+$ by $f^{R(L_2)}$ which is contained
 in $A$. We have $J\in\mathcal W$ and $R(J)=R(L_2)+R(V_i^+)\leq n+R(V_i^+)<(1+\varepsilon)n$.
From the choice of $\tau$ in Step 2, there exists a subinterval of $X$ containing $A$ on which 
  $f^{R(L_2)}$ is a diffeomorphism and the image
 contains the concentric open interval with $f^{R(L_2)}(A)$ of length $(1+2\tau)|f^{R(L_2)}(A)|$.
 By the Koebe Principle,
\begin{equation}\label{equa1}\dfrac{|J|}{|A|} \ge K_\tau\dfrac{|f^{R(L_2)}(J)|}{|f^{R(L_2)}(A)|}
\ge K_\tau\min\{|V_i^-|,|V_i^+|\} .\end{equation}
\medskip

\noindent{\it Case II: \underline{$c\in A$}.} 
For each integer $n\geq1$ put 
$$\widehat n=\min\{k>0\colon J_k^+\in\mathcal W, R(J_{k}^+)>n\}.$$
\eqref{R1} implies
$$A\cap \{R\le (1+\varepsilon ) n \}\supset
\bigcup \{J_k^+\colon 
\widehat n\leq k\leq \widehat n+\varepsilon n/R(V_0^+),\ J_k^+\in\mathcal W \}.$$
Either $A=I_{\widehat n}$ or $A=I_{\widehat n-1}$ holds. 
If $A=I_{\widehat n}$, then
Lemma \ref{LemJK} gives
$$\frac{\left|\bigcup \{J_k^+\colon 
\widehat n\leq k\leq \widehat n+\varepsilon n/R(V_0^+),\ J_k^+\in\mathcal W \}\right|}{|I_{\widehat n} \setminus I_{[\widehat n+
\varepsilon n/R(V_0^+)]} |}\geq\min\{\theta_0,1-\theta_1\}.$$
By Lemma \ref{keyest} there exists $\rho=\rho(\varepsilon)\in(0,1)$ such that
$$  \frac{ |I_{\widehat n} \setminus I_{[\widehat n+\varepsilon n/R(V_0^+)
]} | } {|I_{\widehat n}|}    \ge \rho.$$
Putting these two inequalities together we have
\begin{align*}
\frac{\left|\bigcup \{J_k^+\colon 
\widehat n\leq k\leq \widehat n+\varepsilon n/R(V_0^+),\ J_k^+\in\mathcal W \}\right|}{ |I_{\widehat n} |  } \ge \rho \min\{\theta_0,1-\theta_1\}.
 \end{align*}
 Let $k_0$ be an integer in the interval $[\widehat n, \widehat n+\varepsilon n/R(V_0^+)]$
  such that $|J_{k_0}^+|\geq |J_k^+|$ holds for all other integer $k$ in the interval.
Then 
\begin{equation}\label{equa2} \frac{|J_{k_0}^+  |}{|I_{\widehat n}|}\geq\frac{\rho \min\{\theta_0,1-\theta_1\}}{\varepsilon n/R(V_0^+)+1}.\end{equation}
In the case $A=I_{\widehat n-1}$, we argue replacing $\widehat n$ by $\widehat n-1$.
\eqref{equa1} \eqref{equa2} together imply the second estimate in
 Proposition \ref{nse}. \end{proof}

\section{Proof of the upper bound}
In this section we complete the proof of the upper bound \eqref{up} in
three steps. In Sect.\ref{inter}, using the results and constructions in Sect.2 we 
prove intermediate estimates associated with an inducing scheme with the specification-like property. 
In Sect.\ref{exact} we spread this intermediate estimate
to the whole interval $X$ using the topological exactness.
From this overall estimate we derive \eqref{up} in Sect.\ref{end}.

\begin{remark}
In \cite{Chu11} several strong conditions in terms of the distributions of return times
were introduced for non-hyperbolic systems admitting inducing schemes
(or Young's tower), and it was shown that the LDP holds under these conditions. 
The specification-like property in Proposition \ref{nse} is similar to these conditions,
 %a close inspection of \cite{Chu11} shows that the  upper bound \eqref{up} follows
and the contents of this section are mere adaptations of \cite[?]{Chu11} to our setting. %intended for readers' convenience.
\end{remark}

\subsection{Intermediate estimate associated with inducing scheme}\label{inter}
For the rest of this section, 
fix an inducing scheme $(I,\mathcal W,R)$ for which the conclusion of Proposition \ref{nse} holds.

\begin{prop}
\label{upper0}
Let $f$ be a topologically exact
$S$-unimodal map with non-recurrent flat critical point that is of polynomial order. 
For every $\varepsilon>0$, an integer $l\geq1$, 
continuous functions
$\phi_1,\ldots,\phi_l\colon X\to\mathbb R$  and $\alpha_1,\ldots,\alpha_l\in\mathbb R$
 there exists $n_{1}\geq1$ with the following property:  for every integer
$n\geq n_{1}$ for which there exists
$x\in I$ such that $(1/n)S_n\phi_i(x)\geq\alpha_i$ for every $i\in\{1,\ldots,l\}$,
there exists $\mu\in\mathcal M(f)$ such that 
$$ \int\phi_id\mu>\alpha_i-\varepsilon\quad\text{ for every }i\in\{1,\ldots,l\},$$
and
$$ \frac{1}{n} \log \left| \left\{x\in I\colon \frac{1}{n}S_n\phi_i(x)\geq\alpha_i\text{ for every }i\in\{1,\ldots,l\}\right\} \right|
\leq
\mathscr{F}(\mu)+\varepsilon. $$
\end{prop}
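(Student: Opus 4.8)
The plan is to cover the set $E_n:=\{x\in I\colon \tfrac1nS_n\phi_i(x)\ge\alpha_i,\ 1\le i\le l\}$, up to a null set, by the branches of a horseshoe for a \emph{fixed} iterate of $f$ whose inducing time is close to $n$, and then to read off $\mu$ from a Gibbs measure on that horseshoe. Assume $E_n\ne\emptyset$. \emph{Step 1 (Markov covering).} For $x\in I$ off a null set, let $0=S_0(x)<S_1(x)<\cdots$ be its successive return times to $I$, let $j=j(x)$ be the largest index with $S_j(x)\le n$, and put $m=n-S_j(x)$. The points sharing the first $j$ returns of $x$ form a diffeomorphic pull-back $V$ of $I$ by $f^{S_j}$; since $S_{j+1}(x)>n$, the point $f^{S_j}(x)$ lies in a connected component $A$ of the tail set $\{R>m\}$. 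The triple $(j,V,A)$ is constant on $C(x):=(f^{S_j}|_V)^{-1}(A)\ni x$, and by the Markov property of the inducing scheme the distinct sets $C(x)$ partition a full-measure subset of $I$; in particular they cover $E_n$ up to a null set. A routine reduction, which I omit (handle the finitely many ``short'' cells with $m<n_0$ separately, using that $\mathscr F$ is bounded on $\mathcal M(f)$, together with an induction on $n$), lets us assume $m\ge n_0$ for every cell.

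\emph{Step 2 (Upgrading via Proposition \ref{nse}, and the horseshoe).} For a cell $C$ with data $(j,V,A)$, $m\ge n_0$, apply Proposition \ref{nse} to the component $A$ of $\{R>m\}$: there is $J\in\mathcal W$ with $J\subseteq A$, $m<R(J)\le(1+\varepsilon)m$ and $|J|\ge C(\varepsilon)|A|/m$. Its pull-back $\widetilde J:=(f^{S_j}|_V)^{-1}(J)\subseteq C$ is a pull-back of $I$ with inducing time $R(\widetilde J)=S_j+R(J)\in(n,(1+\varepsilon)n]$, and by bounded distortion of $f^{S_j}$ on $V$ (Koebe, since $\overline I$ has a definite neighbourhood disjoint from $\overline{\{f^k(c)\colon k\ge1\}}$) one gets $|\widetilde J|\ge K_\tau(|J|/|A|)|C|\ge (K_\tau C(\varepsilon)/n)|C|$. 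The cells being pairwise disjoint, so are the $\widetilde J$; moreover the inducing times $R(\widetilde J)$ take at most $\varepsilon n$ values, so discarding all but the commonest one (a factor $\varepsilon n$) we are left with $N\in(n,(1+\varepsilon)n]$ and a finite family $\mathcal H$ of pairwise disjoint pull-backs of $I$, each contained in $I$, with $f^N\colon\widetilde J\to I$ a diffeomorphism of distortion bounded independently of $\widetilde J$ and $n$, and $\sum_{\widetilde J\in\mathcal H}|\widetilde J|\ge(K_\tau C(\varepsilon)/\varepsilon n^2)\,|E_n|$. Set $\Lambda=\bigcap_{k\ge0}(f^N)^{-k}\bigl(\bigcup_{\widetilde J\in\mathcal H}\widetilde J\bigr)$, put on $\Lambda$ the Bernoulli measure $\widehat\mu$ with weights $p_{\widetilde J}=|\widetilde J|/\sum_{\widetilde J'}|\widetilde J'|$, and let $\mu=\tfrac1N\sum_{k=0}^{N-1}(f^k)_*\widehat\mu\in\mathcal M(f)$. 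Then $h(f;\mu)=\tfrac1N\bigl(-\sum_{\widetilde J}p_{\widetilde J}\log p_{\widetilde J}\bigr)$, and by bounded distortion ($|Df^N|\asymp|I|/|\widetilde J|$ on $\widetilde J$) $\chi(f;\mu)=\tfrac1N\bigl(\sum_{\widetilde J}p_{\widetilde J}\log(1/|\widetilde J|)+\log|I|+O(1)\bigr)$, so that
\[
\mathscr F(\mu)=h(f;\mu)-\chi(f;\mu)=\tfrac1N\Bigl(-\sum_{\widetilde J}p_{\widetilde J}\log\tfrac{p_{\widetilde J}}{|\widetilde J|}\Bigr)-\tfrac1N\log|I|+O(1/N)=\tfrac1N\log\sum_{\widetilde J}|\widetilde J|+O(1/N),
\]
the last step because the choice $p_{\widetilde J}\propto|\widetilde J|$ makes the bracket equal $\log\sum_{\widetilde J}|\widetilde J|$. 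Hence $|E_n|\le(\mathrm{const}\cdot\varepsilon n^2)\,e^{N\mathscr F(\mu)}$; since $\mathscr F(\mu)\le0$, $|\mathscr F(\mu)|$ is bounded on $\mathcal M(f)$ and $|N-n|\le\varepsilon n$, this gives $\tfrac1n\log|E_n|\le\mathscr F(\mu)+\varepsilon$ for all $n$ beyond some $n_1=n_1(\varepsilon,\phi_1,\dots,\phi_l)$.

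\emph{Step 3 (The constraint $\int\phi_i\,d\mu>\alpha_i-\varepsilon$).} Each $\widetilde J\in\mathcal H$ arose from a cell containing a point $z$ with $\tfrac1nS_n\phi_i(z)\ge\alpha_i$. On each excursion of $\widetilde J$ (a primitive pull-back, whose intermediate images avoid $I$ and so stay away from $c$), Ma\~n\'e's hyperbolicity theorem gives $|Df^{r-k}(f^k\cdot)|\ge C_1\lambda^{r-k}$, whence the total length of the orbit of $\widetilde J$ over that excursion is at most $C_2|I|$; summing over the excursions and using uniform continuity of the $\phi_i$, together with choosing the inducing interval $I$ small enough at the outset (possible, $c$ being accumulated by periodic points on both sides), bounds the oscillation of $\tfrac1NS_N\phi_i$ on $\widetilde J$ by $\varepsilon/2$. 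Since in addition $|N-n|\le\varepsilon n$ and $\|\phi_i\|_\infty<\infty$, we obtain $\tfrac1NS_N\phi_i>\alpha_i-\varepsilon$ on $\widetilde J$, hence $\int\phi_i\,d\mu=\tfrac1N\int S_N\phi_i\,d\widehat\mu>\alpha_i-\varepsilon$, as required.

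\emph{Main obstacle.} The crux is Step 2: without the specification-like property of Proposition \ref{nse}, the branches produced inside different cells would have uncontrolled, widely differing inducing times and could not be assembled into one subsystem whose topological pressure governs $|E_n|$. The lower bound $|J|\ge C(\varepsilon)|A|/n$ is precisely what keeps the horseshoe from being exponentially thin, while the upper bound $R(J)\le(1+\varepsilon)m$ forces $N/n\to1$, which is what makes the passage from $\tfrac1n\log|E_n|\le\tfrac Nn\mathscr F(\mu)+o(1)$ to $\mathscr F(\mu)+\varepsilon$ harmless. The reduction to $m\ge n_0$ in Step 1 and the oscillation estimate in Step 3 are routine but genuinely rely on, respectively, the uniform bound on $|\mathscr F|$ (or an induction on $n$) and the freedom to take $I$ small.
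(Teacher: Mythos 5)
Your proposal follows essentially the same route as the paper: decompose $E_n$ into Markov cells, use Proposition~\ref{nse} on each cell to produce a pull-back whose inducing time lies in a narrow band about $n$ and whose size is a controlled fraction of the cell, discard all but the most common inducing time, form a horseshoe, read off $\mu$ and its free energy, and control $\int\phi_i\,d\mu$ by an oscillation estimate. Your Step~2 reproves the paper's Lemma~\ref{horse} by hand, which is fine. Your cell structure, based on the component $A$ of $\{R>m\}$ rather than on the individual primitive pull-back $W\subset A$ hit at the last return, is coarser than the paper's $\widehat{\mathscr D}_n$; this is harmless and in fact a slightly cleaner choice, since it makes the map from cells to the ``good'' pull-backs $\widetilde J$ visibly injective, whereas the paper's assignment $K\mapsto K_*$ is many-to-one within a single cylinder (the paper's bookkeeping still works because the cells inside one cylinder have total length at most $|C|\le K_\tau^{-1}n\,C(\varepsilon)^{-1}|K_*|$, but your version avoids having to observe this).

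Two spots in your write-up are phrased in a way that, taken literally, does not work and should be corrected. (1) In Step~1 you claim ``finitely many'' short cells with $m<n_0$ and wave at a boundedness-of-$\mathscr F$/induction argument. The cells with $m<n_0$ are not finitely many; for each diffeomorphic pull-back $V$ of $I$ with $r(V)\in[n-n_0,n]$ and each component $A$ of $\{R>n-r(V)\}$ you get one such cell, and the number of such $V$'s is typically exponential in $n$. The clean way to absorb them, and what the paper actually does, is simply to set $\widetilde J:=V$ for these cells (giving an inducing time in $[n-n_0,n]$) and enlarge the pool of admissible inducing times to $[n-n_0,(1+\varepsilon)n]$, which only changes the combinatorial prefactor from $\varepsilon n$ to $\varepsilon n+n_0+1$. (2) In Step~3 you invoke ``choosing the inducing interval $I$ small enough at the outset.'' That is not available: $I$ is fixed when the inducing scheme is built, before $\varepsilon$ and the $\phi_i$ are specified, and it must work simultaneously for all $\varepsilon$ in Proposition~\ref{upper}. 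Fortunately the device is unnecessary. As you note, the excursion bound plus the exponential contraction of returns gives $\sum_{k=0}^{N-1}|f^k(\widetilde J)|\le C_3|I|$ with $C_3$ independent of $n$, so for any $\delta>0$ the set $\{k<N:|f^k(\widetilde J)|>\delta\}$ has cardinality at most $C_3|I|/\delta$, a constant; dividing by $N$ makes this fraction small, and the remaining $k$ contribute at most the modulus of continuity $\omega_{\phi_i}(\delta)$. Alternatively one may invoke the paper's Lemma~\ref{back} directly.
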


Before entering a proof of Proposition \ref{upper0} we need a couple of lemmas.
\begin{lemma}\label{back}
Let $f$ be a topologically exact $S$-unimodal map.
For every $\delta>0$ there exists an integer $n_2\geq1$
such that for every integer $n\geq n_2$ and every pull-back $W$ of $I$ by $f^n$
which is contained in $I$,
 $|f^i(W)| \le \delta$ holds for every $i\in\{0,1,\ldots,n-n_2-1\}$.
\end{lemma}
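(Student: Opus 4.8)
The plan is to upgrade topological exactness to a quantitative statement with a modulus that is uniform over all intervals of a fixed minimal length, and then to observe that a pull-back of $I$ cannot have a large forward iterate too early without its final iterate being all of $X$ rather than the proper subinterval $I$.

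\textbf{Step 1: a uniform exactness modulus.} First I would record that topological exactness forces $f$ to be onto: applying it to $U=X$ gives an $n$ with $f^{n}(X)=X$, and since $f^{k+1}(X)\subseteq f^{k}(X)$ for every $k$, this forces $f(X)=X$, hence $f^{k}(X)=X$ for all $k\ge0$. Next, fix $\delta>0$ and choose finitely many nondegenerate closed subintervals $P_{1},\dots,P_{m}$ of $X$ --- say the cells of a partition of $X$ into intervals of length roughly $\delta/2$ --- with the property that every subinterval $J$ of $X$ with $|J|\ge\delta$ contains at least one $P_{j}$. For each $j$, topological exactness applied to the interior of $P_{j}$ gives an integer $n_{j}$ with $f^{n_{j}}(P_{j})=X$; setting $N:=\max_{j}n_{j}$ and using surjectivity of $f$ yields $f^{N}(P_{j})=X$ for all $j$, and therefore $f^{N}(J)=X$ whenever $J\subseteq X$ is an interval with $|J|\ge\delta$. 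This provides a modulus $N=N(\delta)$ depending only on $\delta$.

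\textbf{Step 2: forward iterates of a deep pull-back stay small.} Put $n_{2}=N+1$. Let $n\ge n_{2}$ and let $W\subseteq I$ be a pull-back of $I$ by $f^{n}$; then $W$ is an interval, $f^{n}(W)=I$, and each $f^{i}(W)$ is an interval. Suppose for contradiction that $|f^{i}(W)|>\delta$ for some $i\in\{0,1,\dots,n-n_{2}-1\}$. By Step 1, $f^{i+N}(W)=f^{N}\!\bigl(f^{i}(W)\bigr)=X$; since $i+N\le n-2<n$, applying $f^{\,n-i-N}$ and using $f^{\,n-i-N}(X)=X$ we obtain $f^{n}(W)=X$. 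But $I$ is a proper subinterval of $X$ (it is nice, contains $c$, and $\overline{I}$ is disjoint from $\overline{\{f^{n}(c)\colon n\ge1\}}$, which contains $f(c)$), so $f^{n}(W)=I\neq X$, a contradiction. Hence $|f^{i}(W)|\le\delta$ for every $i\in\{0,1,\dots,n-n_{2}-1\}$, with $n_{2}$ depending only on $\delta$, as required.

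The only point that takes any care is Step 1, namely extracting a single modulus $N(\delta)$ valid for all intervals of length at least $\delta$ out of the a-priori interval-dependent exactness time; this is exactly what the finite partition together with the surjectivity of $f$ provide. Step 2 is then a one-line contradiction argument, so I do not anticipate a genuine obstacle.
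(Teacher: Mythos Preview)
Your proof is correct and is in fact more elementary than the paper's. The paper proceeds in two steps: it first invokes an external lemma (\cite[Lemma~2.3]{ChuRivTak}) to obtain $\eta>0$ such that $|f^{n}(W)|\le\eta$ forces $|f^{i}(W)|\le\delta$ for all $i<n$; it then uses Ma\~n\'e's hyperbolicity theorem together with the expansion property of the inducing scheme (both of which rely on the non-recurrence of the critical point) to show that any deep pull-back $W\subset I$ of $I$ satisfies $|f^{\,n-n_{2}}(W)|\le\eta$. Your argument bypasses both ingredients by using only that $f^{n}(W)=I\subsetneq X$: a uniform exactness modulus $N(\delta)$, extracted via the finite partition and surjectivity, forces any iterate of length at least $\delta$ to cover $X$ within $N$ steps, and surjectivity then propagates this to time $n$, contradicting $f^{n}(W)=I$. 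Thus you prove the lemma under exactly the stated hypothesis of topological exactness, without the implicit non-recurrence assumption that the paper's argument inherits from the inducing-scheme setup; the price is that your $n_{2}$ is tied to $N(\delta)$ rather than to the hyperbolic expansion rate, but for the application in Proposition~\ref{upper0} this makes no difference.
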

\begin{proof}
Let $\delta>0$. By \cite[Lemma 2.3]{ChuRivTak}, there exists $\eta\in(0,1/2)$ such that for
every integer $n\geq 1$ and every subinterval~$W$ of~$X$ that satisfies $|f^n(W)|\leq\eta$,
 $|f^i(W)| \le \delta$ holds for every $i\in\{0,1,\ldots,n-1\}$.
From Ma\~n\'e's hyperbolicity theorem  \cite[Theorem A]{Man85}
and the expansion property of the inducing scheme,
it is possible to choose an integer $n_2\geq1$ such that if $n>n_2$ and $W$ is a pull-back of $I$ by $f^n$ which
is contained in $I$, then $|f^{n-n_2}(W)|\leq\eta$.
From the property of $\eta$, $|f^i(W)| \le \delta$ holds for every $i\in\{0,1,\ldots,n-n_2-1\}$.
\end{proof}

The next lemma follows from \cite[Lemma 4.5]{ChuRivTak}. See also \cite[Lemma 7]{Chu11}.
 \begin{lemma}\label{horse}
Let $t, q \ge 1$ be integers, and let $L_1,L_2,\ldots,L_t$ be
pull-backs of $I$ by $f^q$ contained in $I$.
Then there exists $\widehat\mu\in\mathcal M(f^q)$ supported on $L_1 \cup
\cdots \cup L_t$ such that the measure $\mu = (1/q) (\widehat\mu +f_*(\widehat\mu)+
\cdots + f_*^{q - 1}(\widehat\mu))$ is in $\mathcal M(f)$ and satisfies
$$ 
\log \left( |L_1| + \cdots + |L_t| \right)\leq q\mathscr{F}(\mu)
+\log\frac{|I|}{K_\tau}.$$
\end{lemma}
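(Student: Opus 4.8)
The plan is to build the invariant measure $\mu$ directly from the combinatorics of the pull-backs $L_1,\ldots,L_t$, and then compare the total length $|L_1|+\cdots+|L_t|$ with $q\mathscr F(\mu)$ using the bounded distortion of the inducing scheme. First I would consider the induced action: each $L_j$ is a diffeomorphic pull-back of $I$ by $f^q$, and by the usual horseshoe construction I would build a Cantor set $\Lambda\subset L_1\cup\cdots\cup L_t$ that is invariant under the map $g=f^q$ restricted to it, conjugate to the full shift on $t$ symbols (every $L_j$ maps onto $I\supset L_1\cup\cdots\cup L_t$, so all transitions are allowed). On $\Lambda$ the map $g$ is uniformly expanding with bounded distortion — the distortion bound is inherited from the Koebe estimate, since every $f^q:L_j\to I$ is a diffeomorphism with uniformly bounded distortion thanks to the nice interval $I$ having a definite Koebe space $\tau$ around it. Then I would take $\widehat\mu\in\mathcal M(g)$ to be the equilibrium state on $\Lambda$ for the geometric potential $-\log|Dg|$, i.e. the measure of maximal dimension / the one for which $h(g;\widehat\mu)=\chi(g;\widehat\mu)$; alternatively one can just take any ergodic $g$-invariant measure on $\Lambda$ and optimize, but the equilibrium state is the clean choice. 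Spreading it out by $\mu=\frac1q(\widehat\mu+f_*\widehat\mu+\cdots+f_*^{q-1}\widehat\mu)$ gives an $f$-invariant measure with $h(f;\mu)=\frac1q h(g;\widehat\mu)$ and $\chi(f;\mu)=\frac1q\chi(g;\widehat\mu)$, hence $\mathscr F(\mu)=\frac1q(h(g;\widehat\mu)-\chi(g;\widehat\mu))$.

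The numerical comparison is the content of the lemma. Writing $\Lambda_m$ for the union of the $m$-cylinders of $\Lambda$ (images of $L_{j_0}\cap g^{-1}L_{j_1}\cap\cdots\cap g^{-m+1}L_{j_{m-1}}$), bounded distortion gives $|Dg^m|$ essentially constant on each $m$-cylinder, so $|L_{j_0}\cap\cdots\cap g^{-m+1}L_{j_{m-1}}|\asymp |I|\,|Dg^m|^{-1}$ up to the Koebe constant $K_\tau$, independently of the cylinder. Summing over all $t^m$ cylinders and taking $\log$, then dividing by $m$ and letting $m\to\infty$, the exponential growth rate of $\sum|{\rm cylinders}|$ equals $\sup_\nu(h(g;\nu)-\chi(g;\nu))$ over $g$-invariant $\nu$ on $\Lambda$, which is attained at the equilibrium state $\widehat\mu$ — this is the standard variational/thermodynamic identity for a finite-branch expanding Markov map with bounded distortion (a Bowen-type formula for the topological pressure of $-\log|Dg|$ restricted to $\Lambda$, which here is simply $\log\bigl(\sum_j$ something$\bigr)$). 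The $m=1$ term of this telescoping already gives
\[
\log\bigl(|L_1|+\cdots+|L_t|\bigr)\le \log|I|-\log K_\tau + \log\Bigl(\sum_{j=1}^t |Dg|_{L_j}^{-1}\,(\text{distortion factor})\Bigr),
\]
and the point is that the last sum is bounded by $e^{q\mathscr F(\mu)}$ once $\widehat\mu$ is chosen to realize the pressure; more precisely one shows $q\mathscr F(\mu)=P(g|_\Lambda,-\log|Dg|)\ge \log\sum_j\sup_{L_j}|Dg|^{-1}-\log(\text{const})$, and the Koebe constant absorbs the const.

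I would then simply read off the stated inequality $\log(|L_1|+\cdots+|L_t|)\le q\mathscr F(\mu)+\log(|I|/K_\tau)$: the $|L_j|$ are pull-backs of the single interval $I$, so $|L_j|\le K_\tau^{-1}|I|\sup_{L_j}|Dg|^{-1}$ by Koebe, whence $\sum_j|L_j|\le K_\tau^{-1}|I|\sum_j\sup_{L_j}|Dg|^{-1}\le K_\tau^{-1}|I|\,e^{P(g|_\Lambda,-\log|Dg|)}=K_\tau^{-1}|I|\,e^{q\mathscr F(\mu)}$, using that the one-step partition function bounds (indeed equals, up to the Markov structure being full) the exponential of the pressure, and that $\mathscr F(\mu)$ equals $q^{-1}$ times that pressure for the equilibrium measure. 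The main obstacle I anticipate is purely bookkeeping: making sure the distortion constant that enters the relation between $|L_j|$ and $\sup_{L_j}|Dg|^{-1}$ is \emph{exactly} $K_\tau$ (not merely a comparable constant), which is why one wants to invoke the Koebe Principle in the sharp form already used in the proof of Lemma \ref{LemJK} and Proposition \ref{nse} — the interval $I$ sits inside a larger interval with $\tau$-scaled collars disjoint from the postcritical set, giving the cross-ratio bound with constant precisely $K_\tau=(\tau/(1+\tau))^2$. Since the cited \cite[Lemma 4.5]{ChuRivTak} already packages exactly this computation, in the write-up I would keep the argument short: construct $\Lambda$ and $\widehat\mu$ as above, invoke the variational identity for $\mathscr F$ on the full-branch expanding set, and apply Koebe once to pass from $|L_j|$ to $|I|\sup_{L_j}|Dg|^{-1}/K_\tau$.
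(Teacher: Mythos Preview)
Your strategy is the standard one and is essentially what the cited references \cite[Lemma~4.5]{ChuRivTak} and \cite[Lemma~7]{Chu11} carry out; the paper itself gives no argument beyond those citations, so there is nothing further to compare on the level of approach.

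There is, however, a genuine bookkeeping slip in your final chain of inequalities. You assert
\[
\sum_j\sup_{L_j}|Dg|^{-1}\ \le\ e^{P(g|_\Lambda,\,-\log|Dg|)},
\]
but this goes the wrong way. For a full-branch expanding Markov map the sequence $n\mapsto\log Z_n^{\sup}$, with $Z_n^{\sup}=\sum_{|\omega|=n}\sup_{L_\omega}|Dg^n|^{-1}$, is \emph{sub}additive, so $P=\inf_n\tfrac1n\log Z_n^{\sup}\le\log Z_1^{\sup}$; that is, $e^P\le Z_1^{\sup}$, not the reverse. What \emph{is} true is that $n\mapsto\log Z_n^{\inf}$, with $Z_n^{\inf}=\sum_{|\omega|=n}\inf_{L_\omega}|Dg^n|^{-1}$, is \emph{super}additive, whence $e^P\ge Z_1^{\inf}=\sum_j\inf_{L_j}|Dg|^{-1}$.

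The repair is to route the Koebe constant through this step rather than through the length estimate. By the mean value theorem alone one already has $|L_j|\le|I|\,\sup_{L_j}|Dg|^{-1}$ (no $K_\tau$ needed here). Then the Koebe distortion bound on each branch gives $\sup_{L_j}|Dg|^{-1}\le K_\tau^{-1}\inf_{L_j}|Dg|^{-1}$, and finally superadditivity yields $\sum_j\inf_{L_j}|Dg|^{-1}\le e^P=e^{q\mathscr F(\mu)}$ for the equilibrium state $\widehat\mu$. Chaining these three gives exactly
\[
\sum_j|L_j|\ \le\ \frac{|I|}{K_\tau}\,e^{q\mathscr F(\mu)},
\]
which is the stated inequality. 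So the obstacle you anticipated---tracking where $K_\tau$ enters---was indeed the crux; it enters in passing from $\sup$ to $\inf$ on each branch, not in the length--derivative comparison.
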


 %\begin{lemma}\label{dist}
%  There exists $C>0$ such that for every pull-back $J$ of $I$, %and let $n=\min\{k>r(J)\colon f^k(J)\cap I\neq\emptyset\}$. 
%$$  \sum_{n=0}^{r(J)-1}|f^n(J)|\leq C.$$
 % \end{lemma}
 % \begin{proof}
%  Let $0=n_0<n_1<\cdots<n_k=r(J)$ denote the maximal sequence of return times of iterates of $J$ to $I$ up to time $r(J)$.
% Split  $$ \sum_{n=0}^{r(J)}|f^n(J)|=|J|+\sum_{j=0}^{k-1}\sum_{n=n_j+1}^{n_{j+1}}|f^n(J)|.$$
%  From Ma\~n\'e's hyperbolicity theorem, there exists 
 %   for every $j\in\{0,1,\ldots,k-1\}$ we have
 %   $$  \sum_{n=n_j+1}^{n_{j+1}}|f^n(J)|\leq  C_1^{-1} |f^{n_{j+1}}(J)|\sum_{n=n_j+1}^{n_{j+1}}\lambda^{n_{j+1}-n}
 %   \leq \frac{C_1^{-1}}{1-\lambda}|f^{n_{j+1}}(J)|.$$
% Summing this inequality over all  $j\in\{0,1,\ldots,k-1\}$ and then using
%  the expansion property of the inducing scheme yields the desired estimate.
%  \end{proof}

\begin{proof}[Proof of Proposition \ref{upper0}]
We define a sequence $\{\widehat{\mathscr D}_n\}_{n\geq0}$ of collections of open subintervals 
of $I$ inductively as follows.
 Start with $\widehat{\mathscr D}_0=\mathcal W$.
 Let $n\geq1$ and suppose $\widehat{\mathscr D}_{n-1}$ has been defined.
 Let $K\in\widehat{\mathscr D}_{n-1}$. If $f^{n}(K)=I$, then define
 $\widehat{\mathscr D}_{n}(K)=\{K'\subset K\colon f^{n}(K')\in \mathcal W\}$.
 Otherwise, define $\widehat{\mathscr D}_{n}(K)=\{K\}$.
 Set $\widehat{\mathscr D}_{n}=\bigcup_{K\in\widehat{\mathscr D}_{n-1}}\widehat{\mathscr D}_{n}(K)$.

Let $\varepsilon>0$, $l\geq1$ an integer,
$\phi_1,\ldots,\phi_l\colon X\to\mathbb R$ be continuous functions and $\alpha_1,\ldots,\alpha_l\in\mathbb R$.
 Let $\mathscr D_n$ denote the collection of $K\in\widehat{\mathscr D}_n$ such that
 there exists $x\in K$ such that  $(1/n)S_n\phi_i(x)\geq \alpha_i$ for every $i\in\{1,\ldots,l\}$.
We show that there exists an integer $n_1\geq1$ such that for every $n\geq n_1$ and
every $K\in\mathscr D_{n}$ there exists a pull-back $K_*$ of $I$ which is contained in $I$ such that
the following holds:
\begin{equation}\label{NS}
n-n_1\leq r(K_*)\leq(1+\varepsilon)n;\end{equation}
\begin{equation}\label{NS''} | K_*|\geq\frac{K_\tau C(\varepsilon) }{n}|K|;
\end{equation}
\begin{equation}\label{NS'}\frac{1}{r(K_*)}S_{r(K_*)}(x)\geq \alpha_i-\varepsilon\quad \text{for every }x\in K_*\text{ and every } i\in\{1,\ldots,l\}.\end{equation}

    For each $K\in\mathscr D_n$ define $m(K)=\max\{k\leq n\colon f^k(K)\subset I\}$
and define
$$\mathscr D_n^{'}=\{K\in\mathscr D_{n}\colon m(K)<n-n_0\}$$
and
$$\mathscr D_n^{''}=\{K\in\mathscr D_{n}\colon n-n_0\leq m(K)\leq n\}.$$
If $K\in \mathscr D_n^{''}$, then define $K_*$ to be the pull-back of $I$ by $f^{m(K)}$ which contains $K$.
Then $r(K_*)=m(K)$ and \eqref{NS} holds. \eqref{NS''} is obvious because $K=K_*$.

 Let $K\in\mathscr{D}_n^{'}$. Let $A$ denote the connected component
   of  $\{R>n-m(K)\}$ which contains $f^{m(K)}(K)$. By Proposition \ref{nse}
   and $n-m(K)\leq n$
   there exists $J\in\mathcal W$ which is contained in $A$ and satisfies
    $$n-m(K)< R(J)\leq(1+\varepsilon)(n-m(K))\quad\text{and}\quad
 |J|\geq\frac{C(\varepsilon)}{n}|A|.$$ 
 Let $J'$ denote the pull-back of $I$ by $f^{m(K)}$ which is contained in $I$
 and contains $K$.
 Let $K_*$ denote the pull-back of $J$ by $f^{m(K)}$ 
 which is contained in $J'$.  
 Since $r(K_*)=m(K)+R(J)$, 
 $$n\leq r(K_*)\leq m(K)+(1+\varepsilon)(n-m(K))\leq(1+\varepsilon)n$$
 and
\begin{equation}\label{EQ1}
\frac{|K_*|}{|K|}\geq K_\tau\frac{|f^{m(K)}(K_*)|}{|f^{m(K)}(K)|}\geq K_\tau\frac{|J|}{|A|}\geq\frac{K_\tau C(\varepsilon)}{n}.\end{equation}

 It remains to show \eqref{NS'}.
Fix $\delta>0$ such that
if $|x-y|\leq\delta$ then
$|\phi_i(x)-\phi_i(y)|\leq\varepsilon/2$ holds for every $i\in\{1,\ldots,l\}$.
For this $\delta$ let $n_2\geq1$ be the integer for which the conclusion of Lemma \ref{back} holds.

 For each $K\in\mathscr{D}_{n}$ choose $x_*\in K$ such that
$(1/n)S_n\phi_i(x_*)\geq\alpha_i$ holds for every $i\in\{1,\ldots,l\}$.
Using \eqref{NS},
for every $x\in K_*$ and every $i\in\{1,\ldots,l\}$ we have
\begin{align*}S_{r(K_*)}\phi_i(x) - S_{r(K_*)}\phi_i(x_*) &\leq| S_{r(K_*)-n_2-1}\phi_i(x) - S_{r(K_*)-n_2-1}\phi_i(x_*)|\\
&+| S_{n_2+1}\phi_i(f^{r(K_*)-n_2-1}(x)) - S_{n_2+1}\phi_i(f^{r(K_*)-n_2-1}(x_*))|\\
&\leq (r(K_*)-n_2-1)\frac{\varepsilon}{2}+2(n_2+1)\|\phi_i\|,\end{align*}
where $\|\phi_i\|=\max_{x\in X}|\phi_i(x)|$.
For sufficiently large $n$, $r(K_*)$ becomes large and
we have  $$\frac{1}{r(K_*)}S_{r(K_*)}\phi_i(x) > \alpha_i - \varepsilon,$$
which implies \eqref{NS'}.

 We are in position to finish the proof of Proposition \ref{upper0}. \eqref{NS''} gives
\begin{equation*}
\sum_{K\in\mathscr{D}_n}|K|\leq \frac{n}{K_\tau C(\varepsilon)}\sum_{K\in\mathscr{D}_n}|K_*|.
\end{equation*}
Split the summand of the right-hand-side as follows:
$$\sum_{K\in\mathscr{D}_n}|K_*|=\sum_{s=n-n_0}^{[(1+\varepsilon)n]}\sum_{\stackrel{K\in\mathscr{D}_n}
{r(K_*)=s}}|K_*|.$$
Suppose that $s_0\in\{n-n_0,\ldots,[(1+\varepsilon)n]\}$ maximizes the summand.
Then
$$\sum_{K\in\mathscr{D}_n}|K_*|\leq(\varepsilon n+n_0+1)\sum_{
\stackrel{K\in\mathscr{D}_n}
{r(K_*)=s_0}}|K_*|.$$
Combining three inequalities we get
\begin{equation*}\label{uno}
\frac{1}{n}\sum_{K\in\mathscr{D}_n}|K|\leq\frac{1}{n}\log\frac{n(\varepsilon n+n_0+1)}{K_\tau C(\varepsilon)}+\frac{1}{n}\log\sum_{
\stackrel{K\in\mathscr{D}_n}
{r(K_*)=s_0}}|K_*|.
\end{equation*}
From Lemma \ref{horse} there exists
$\mu\in\mathcal M(f)$ such that 
$\int\phi_id\mu>\alpha_i-\varepsilon$ holds for every $i\in\{1,\ldots,l\}$ and
\begin{equation*}\frac{1}{s_0}\log\sum_{\stackrel{K\in\mathscr{D}_n}
{r(K_*)=s_0}}|K_*|
\leq \mathscr{F}(\mu)+\frac{1}{s_0}\log\frac{|I|}{K_\tau}.
\end{equation*}
For sufficiently large $n$ we have
\begin{align}\label{dos}\frac{1}{n}\log\sum_{\stackrel{K\in\mathscr{D}_n}
{r(K_*)=s_0}}|K_*|
&\leq \frac{s_0}{n}\mathscr{F}(\mu)+\frac{1}{n}\log\frac{|I|}{K_\tau}\\
&=\mathscr{F}(\mu)-\frac{n_0}{n}\mathscr{F}(\mu)+\frac{1}{n}\log\frac{|I|}{K_\tau}.
\end{align}
\eqref{uno} and \eqref{dos} together imply the desired inequality for sufficiently large $n$.\end{proof}

\subsection{Overall estimate}\label{exact}

The upper bound follows from the next proposition.

\begin{prop}\label{upper}Let $f$ be a topologically exact $S$-unimodal map with non-recurrent flat critical point
that is of polynomial order. 
For every $\varepsilon>0$, every integer $l\geq1$,
let $\phi_1,\ldots,\phi_l\colon X\to\mathbb R$ be
continuous functions, and let $\alpha_1,\ldots,\alpha_l\in\mathbb R$.
Then
\begin{multline*}
\limsup_{n\to\infty}\frac{1}{n} \log \left|\left\{x\in X\colon \frac{1}{n}S_n\phi_i(x)\geq\alpha_i\ \text{ for every $i\in\{1,\ldots,l\}$}\right\}\right|\\
\\ \leq
\sup\left\{\mathscr{F}(\mu)\colon\text{$\mu\in\mathcal M(f)$ and $\int\phi_id\mu>\alpha_i-\varepsilon$ for every $i\in\{1,\ldots,l\}$}\right\}+\varepsilon.
\end{multline*}
\end{prop}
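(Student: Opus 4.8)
The plan is to derive Proposition~\ref{upper} from Proposition~\ref{upper0} by using topological exactness to pass from $X$ to the nice interval $I$. Fix $\varepsilon>0$, an integer $l\geq1$, continuous functions $\phi_1,\dots,\phi_l\colon X\to\mathbb R$ and reals $\alpha_1,\dots,\alpha_l$; write $\|\phi_i\|=\max_{x\in X}|\phi_i|$. For an integer $m\geq1$ put
$$E_m=\Big\{x\in X\colon \tfrac{1}{m}S_m\phi_i(x)\geq\alpha_i\text{ for all }i\Big\},\qquad
\widetilde E_m=\Big\{y\in I\colon \tfrac{1}{m}S_m\phi_i(y)\geq\alpha_i-\tfrac{\varepsilon}{3}\text{ for all }i\Big\}.$$
By topological exactness there is an integer $N\geq1$ with $f^N(I)=X$, and since $f$ is $C^1$ on the compact interval $X$ the iterate $f^N$ is Lipschitz with some finite constant $L$.

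The first step is to show that $E_n\subset f^N(\widetilde E_{n+N})$ for all sufficiently large $n$. Given $x\in E_n$, choose $y\in I$ with $f^N(y)=x$, which exists because $f^N(I)=X$. From $S_{n+N}\phi_i(y)=S_N\phi_i(y)+S_n\phi_i(x)\geq -N\|\phi_i\|+n\alpha_i$ one gets $\tfrac{1}{n+N}S_{n+N}\phi_i(y)\geq\alpha_i-\tfrac{N(\|\phi_i\|+|\alpha_i|)}{n+N}$, and this is $\geq\alpha_i-\tfrac{\varepsilon}{3}$ once $n$ exceeds a bound depending only on $N,\varepsilon,l$ and on $\|\phi_i\|,\alpha_i$; hence $y\in\widetilde E_{n+N}$, proving the inclusion. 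Since the image of a measurable set under an $L$-Lipschitz map has Lebesgue measure at most $L$ times that of the set, this yields $|E_n|\leq L\,|\widetilde E_{n+N}|$ for all large $n$.

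The second step is to feed this into Proposition~\ref{upper0}, applied with the same functions $\phi_1,\dots,\phi_l$, with levels $\alpha_i-\tfrac{\varepsilon}{3}$, and with error parameter $\tfrac{\varepsilon}{3}$. This provides an integer $n_1$ such that, for every $m\geq n_1$ with $\widetilde E_m\neq\emptyset$, there is $\mu_m\in\mathcal M(f)$ with $\int\phi_i\,d\mu_m>(\alpha_i-\tfrac{\varepsilon}{3})-\tfrac{\varepsilon}{3}>\alpha_i-\varepsilon$ for all $i$ and $\tfrac{1}{m}\log|\widetilde E_m|\leq\mathscr{F}(\mu_m)+\tfrac{\varepsilon}{3}$. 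As $\mu_m$ belongs to the family over which the supremum in the statement is taken, this gives $\tfrac{1}{m}\log|\widetilde E_m|\leq S$, where $S:=\sup\{\mathscr{F}(\nu)\colon\nu\in\mathcal M(f),\ \int\phi_i\,d\nu>\alpha_i-\varepsilon\text{ for all }i\}+\tfrac{\varepsilon}{3}$. (If instead $\widetilde E_{n+N}=\emptyset$, then $E_n\subset f^N(\widetilde E_{n+N})=\emptyset$ and there is nothing to bound.) Combining with $|E_n|\leq L\,|\widetilde E_{n+N}|$,
$$\tfrac{1}{n}\log|E_n|\leq\tfrac{\log L}{n}+\tfrac{n+N}{n}\cdot\tfrac{1}{n+N}\log|\widetilde E_{n+N}|\leq\tfrac{\log L}{n}+\tfrac{n+N}{n}\,S,$$
where the last step is monotonicity of multiplication by $\tfrac{n+N}{n}>0$ together with $\tfrac{1}{n+N}\log|\widetilde E_{n+N}|\leq S$. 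For $n$ large the right-hand side is at most $S+\tfrac{2\varepsilon}{3}=\sup\{\mathscr{F}(\nu)\colon\dots\}+\varepsilon$, so taking $\limsup_{n\to\infty}$ proves the proposition.

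I expect no genuine obstacle: Proposition~\ref{upper} is essentially a bookkeeping consequence of Proposition~\ref{upper0}. The only points needing attention are the splitting of $\varepsilon$ (one third absorbed into the orbit shift by $N$ steps, which is why Proposition~\ref{upper0} is invoked at the levels $\alpha_i-\tfrac{\varepsilon}{3}$; one third for the error in Proposition~\ref{upper0}; one third for the lower-order terms $\tfrac{\log L}{n}$ and $\tfrac{N}{n}$), the trivial case $\widetilde E_{n+N}=\emptyset$, and the remark that the $m$-dependence of the measure $\mu_m$ produced by Proposition~\ref{upper0} is harmless because the target right-hand side is a supremum over all admissible measures.
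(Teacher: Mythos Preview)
Your proof is correct and follows essentially the same route as the paper's: use topological exactness to find an iterate mapping $I$ onto $X$, pull each $x\in E_n$ back to a preimage in $I$, control the change in the ergodic averages by a bounded term, and then invoke Proposition~\ref{upper0}. The only cosmetic difference is that the paper keeps the same time $n$ for the $I$-set (using $|S_n\phi_i(y)-S_n\phi_i(x)|\leq 2M\|\phi_i\|$ via a telescoping argument) whereas you shift to time $n+N$; your $\varepsilon/3$ bookkeeping is in fact slightly cleaner than the paper's.
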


\begin{proof}
%Let $\varepsilon>0$, $l \ge 1$, $\phi_1,\ldots,\phi_l$,
%and~$\alpha_1,\ldots,\alpha_l$ 
% for which the conclusion of
%Proposition~\ref{upper0} holds.
Since $f$ is topologically exact, there exists an integer $M\geq1$ such that $f^M(I)=X$. 
Let $\varepsilon>0$, $l\geq1$ an integer, 
$\phi_1,\ldots,\phi_l\colon X\to\mathbb R$ continuous and $\alpha_1,\ldots,\alpha_l\in\mathbb R$.
Since each $\phi_i$ is bounded, for sufficiently large $n$ we have
%$$\bigcap_{i=1}^l\left\{
 % \frac{1}{n}S_n\phi_i\geq\alpha_i\right\}
%$$
\begin{multline*}
\left\{x\in X\colon
  \frac{1}{n}S_n\phi_i(x)\geq\alpha_i\ \text{ 
     for every $i\in\{1,\ldots,l\}$}\right\}
 \subset \\
f^M\left\{x\in I\colon
  \frac{1}{n}S_n\phi_i(x)\geq\alpha_i - \varepsilon \ \text{ 
     for every $i\in\{1,\ldots,l\}$}\right\},
\end{multline*}
and therefore
\begin{multline*}
\frac{1}{n}\log\left|\left\{x\in X\colon
    \frac{1}{n}S_n\phi_i(x)\geq\alpha_i\ \text{ 
       for every $i\in\{1,\ldots,l\}$}\right\}\right|
\\ \leq
\frac{1}{n} \log\left|\left\{x\in I\colon
    \frac{1}{n}S_n\phi_j(x)\geq\alpha_i - \varepsilon \ \text{ 
       for every $i\in\{1,\ldots,l\}$}\right\}\right|+\frac{\varepsilon}{2}.
\end{multline*}
By Proposition~\ref{upper0}, for each large
$n$ there exists $\mu\in\mathcal M(f)$ such that
$\int \phi_id\mu>\alpha_i - \varepsilon$ for every $i\in\{1,\ldots,l\}$ and
$$ \frac{1}{n} \log\left|\left\{x\in I\colon
    \frac{1}{n}S_n\phi_i(x)\geq\alpha_i-\varepsilon\
    \text{ for every $i\in\{1,\ldots,l\}$}\right\}\right|
\leq
\mathscr{F}(\mu) + \frac{\varepsilon}{2}.$$
Combining the above two inequalities and then letting $n\to\infty$ yields the desired inequality.
\end{proof}

\subsection{End of the proof of the upper bound}\label{end}
%We are in position to prove Proposition \ref{upper} and finish the proof of the upper bound in the LDP.

\begin{proof}[Proof of the upper bound \eqref{up}]
Following \cite{ChuRivTak} let $\mathcal K$ be a closed subset of $\mathcal M$, and 
$\mathcal G$ an arbitrary open set containing~$\mathcal K$.
Since~$\mathcal K$ is compact, 
one can choose a finite collection $\mathcal C_1, \ldots, 
\mathcal C_r$ of closed sets
such that $\mathcal K \subset \bigcup_{k=1}^r \mathcal C_k \subset \mathcal G $
and each has the form
$$\mathcal C_k=\left\{\mu\in\mathcal M\colon \int\phi_j d\mu\geq\alpha_j\text{ for every $j\in\{1,\ldots,p\}$}\right\},$$
where $p\geq1$ is an integer, $\phi_j\colon X\to\mathbb R$ is continuous and 
$\alpha_j\in\mathbb R$.
For each $k\in\{1,\ldots,r\}$ and $\varepsilon>0$ define an open set
$\mathcal C_k(\varepsilon )$ containing $\mathcal C_k $
by replacing $\int\phi_j d\nu\geq \alpha_j$ in the definition of $\mathcal C_k$ by $\int\phi_j d\nu> \alpha_j-\varepsilon$.
Proposition \ref{upper} gives
$$\limsup_{n\to\infty}\frac{1}{n}\log\left|\left\{x\in X\colon
\delta_{x}^n\in\mathcal C_k\right\}\right|\leq \sup_{ \mu\in\mathcal C_k(\varepsilon)}\mathscr{F}(\mu)+\varepsilon.$$
Since
$\bigcup_{k=1}^r \mathcal C_k (\varepsilon )\subset \mathcal G$
for $\varepsilon>0$ small enough,
we have
\begin{align*}
 \limsup_{n\to\infty}\frac{1}{n}
\log\left|\left\{x\in X\colon\delta_x^n \in\bigcup_{k=1}^r {\mathcal C}_k \right\}\right|
&\leq
\max_{k\in\{1,\ldots,r\}} \limsup_{n\to\infty}\frac{1}{n}\log
\left|\left\{x\in X\colon\delta_x^n \in {\mathcal C}_k \right\}\right|\\
& \le \max_{k\in\{1,\ldots,r\}}  
\sup_{\mu\in \mathcal C_k (\varepsilon)} \mathscr{F}(\mu)  +\varepsilon
\\ & \le
\sup_{\mu\in\mathcal G}\mathscr{F}(\mu )+\varepsilon.
\end{align*}
Letting~$\varepsilon \to 0$ we obtain
$$\limsup_{n\to\infty}\frac{1}{n}\log\left|\left\{x\in X\colon \delta_x^n\in\mathcal K\right\}\right|
\le \sup_{\mu\in\mathcal G}\mathscr{F}(\mu).$$
Since $\mathcal G$ is an arbitrary open set containing 
 $\mathcal K$, it follows that
\begin{align*}
\limsup_{n\to\infty}\frac{1}{n}\log\left|\left\{x\in X\colon  \delta_{x}^n\in\mathcal K\right\}\right|
&\leq
\inf_{\mathcal G \supset \mathcal{K}} \sup_{ \mu\in \mathcal G } \mathscr{F}(\mu)\\
&=
\inf_{\mathcal G \supset \mathcal{K}} \sup_{ \mu\in \mathcal G } (-\mathscr{I}(\mu))\\
&=
-\inf_{\mu\in\mathcal K}\mathscr{I}(\mu).\end{align*}
The last equality is due to the upper semi-continuity of $-\mathscr{I}$. This completes
the proof of \eqref{up} and hence that of Theorem A. \end{proof}

\section{Structure of zeros of rate function}

In this section we analyze the structures of the set of zeros of the rate functions for maps in Theorem A. 
In Sect.\ref{analytic} we develop analytic estimates associated with the inducing scheme,
and finish the proof of Theorem B in Sect.\ref{endb}. 
The rest of this section is devoted to the proof of Theorem C.

\subsection{Recovering expansion}\label{analytic}
For the proof of Theorem B
we need the next analytic estimates
associated with the inducing scheme.
 For two positive functions $a(x)$, $b(x)$ defined on (subsets of) neighborhoods of the critical point $c$, the expression $a(x)\sim b(x)$
indicates that $a(x)/b(x)$ is bounded and bounded away from $0$.

\begin{lemma}\label{P}
Let $f$ be a topologically transitive $S$-unimodal map with a non-recurrent flat critical point $c$,
and let  $(I,\mathcal W,R)$ be an inducing scheme.
If $x\in I$ and there exists an integer $k\geq0$ such that $x\in J_k^+\cup J_k^-$
and $J_k^+\in\mathcal W$,
then
$$  R(x)\sim\ell(x)\log|x-c|^{-1},$$
and
$$ |Df^{R(x)}(x)|\sim\left|D\ell(x)\log|x-c|+\frac{\ell(x)}{x-c}\right|.$$
\end{lemma}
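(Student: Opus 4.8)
The plan is to note first that $R(x)=R(J_k^+)=:R_k$ for every $x\in J_k^+\cup J_k^-$ with $J_k^+\in\mathcal W$: for $x^+\in J_k^+$ this is the definition, and for $x^-\in J_k^-$ one observes that $f(x^-)=f(x^+)$ forces $J_k^-$ to be a primitive pull-back of $I$ contained in $I$ with the same inducing time $R_k$. For the first formula I would then play the key estimate \eqref{once} against the expansion estimate \eqref{mane}. Since $\overline I$ is disjoint from $\overline{\{f^n(c):n\ge1\}}$, the whole forward orbit of $f(c)$ stays out of $I$, so \eqref{mane} gives $D_k:=|Df^{R_k}(f(c))|\ge C_1\lambda^{R_k}$, while trivially $D_k\le(\sup_X|Df|)^{R_k}$; hence $\log D_k\sim R_k$. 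By \eqref{R1} the jumps $R_{k+1}-R_k$ are uniformly bounded and $R_k\to\infty$, so $R_{k+1}/R_k\to1$. Substituting $\log D_k\sim R_k$ into \eqref{once} yields $\ell(a_k^+)\log|I_k^+|^{-1}\sim R_k$, and the identity $f(a_k^-)=f(a_k^+)$ forces the analogue $|I_k^-|^{\ell(a_k^-)}D_k\sim1$, hence $\ell(a_k^-)\log|I_k^-|^{-1}\sim R_k$. Since $\ell$ is increasing towards $c$ one has $1\le\ell(a_{k+1}^+)/\ell(a_k^+)$, while the last relations together with $R_{k+1}/R_k\to1$ give $\ell(a_{k+1}^+)/\ell(a_k^+)\sim\log|I_k^+|^{-1}/\log|I_{k+1}^+|^{-1}\le1$; so this ratio is bounded, and then the polynomial form $\ell=|x-c|^{-v}$ with $v(c)>0$ forces $|I_{k+1}^+|/|I_k^+|$ (and $|I_{k+1}^-|/|I_k^-|$) to stay bounded away from $0$. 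Consequently $|x-c|\sim|I_k^+|$, $\log|x-c|\sim\log|I_k^+|$ and $\ell(x)\sim\ell(a_k^+)$ for $x\in J_k^+$ (similarly on the left), and $R(x)=R_k\sim\ell(a_k^+)\log|I_k^+|^{-1}\sim\ell(x)\log|x-c|^{-1}$, which is the first assertion.

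For the derivative I would differentiate the defining relation $|\xi(x)|^{\ell(x)}=\eta(f(x))$ of a flat critical point, obtaining $D\eta(f(x))\,Df(x)=|\xi(x)|^{\ell(x)}\bigl(D\ell(x)\log|\xi(x)|+\ell(x)D\xi(x)/\xi(x)\bigr)$. Since $\xi$, $\eta$ are $C^1$ diffeomorphisms vanishing at $c$ and $f(c)$ respectively, one has $|\xi(x)|\sim|x-c|$, $D\xi(x)\sim1$, $\log|\xi(x)|=\log|x-c|+O(1)\sim\log|x-c|$, $D\eta(f(x))\sim1$ and $|\xi(x)|^{\ell(x)}=\eta(f(x))\sim|f(x)-f(c)|$; moreover for a critical point of polynomial order $|D\ell(x)|\sim\ell(x)/|x-c|$, so — because $|\log|x-c||\to\infty$ — the term $D\ell(x)\log|x-c|$ dominates $\ell(x)/(x-c)$ and all the error terms coming from $\xi$ and $\eta$. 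This yields the clean estimate $|Df(x)|\sim|f(x)-f(c)|\cdot\bigl|D\ell(x)\log|x-c|+\ell(x)/(x-c)\bigr|$, valid for every $x$ near $c$ on either side.

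Finally I would feed this into $Df^{R_k}(x)=Df^{R_k-1}(f(x))\,Df(x)$. Since $J_k^+\in\mathcal W$, the interval $f(J_k^+)$ is a diffeomorphic pull-back of the nice interval $I$ by $f^{R_k-1}$; disjointness of $\overline I$ from $\overline{\{f^n(c)\}}$ supplies the Koebe space needed to make $f^{R_k-1}$ of uniformly bounded distortion on $f(J_k^+)$, so $|Df^{R_k-1}(f(x))|\sim|I|/|f(J_k^+)|\sim|f(J_k^+)|^{-1}$ (and the same factor appears for $x\in J_k^-$ because $f(x^-)=f(x^+)$). Combining the last two displays, the second assertion reduces to the single comparison $|f(x)-f(c)|\sim|f(J_k^+)|$ for $x\in J_k^+$ — equivalently, using the first assertion and bounded distortion of $f^{R_k}$ on $J_k^+$, to $|J_k^+|\sim|I_k^+|/R(J_k^+)$ — after which $|Df^{R_k}(x)|\sim\bigl|D\ell(x)\log|x-c|+\ell(x)/(x-c)\bigr|$, and the left-hand case is identical once $Df(x^-)$ is computed from $|\xi(x^-)|^{\ell(x^-)}=\eta(f(x^-))$ in the same way. \textbf{The hard part is this last comparison}: it says that the critical values $f(a_k^+)$ approach $f(c)$ with a definite relative jump at each admissible level, at precisely the polynomial rate forced by \eqref{once}, and it is the one place where the fine geometry of the inducing scheme (rather than merely the Koebe Principle and bookkeeping with return times) is genuinely used; I would extract it from \eqref{once} together with the bound on $R_{k+1}-R_k$ from \eqref{R1}.
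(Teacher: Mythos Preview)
Your derivative argument has a genuine gap, and both parts of your proof miss the key simplifying idea that the paper uses.

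The paper works not with the interval $f(J_k^+)$ but with the larger segment $[f(x),f(c)]\subset f(I_k)$. Because $x\in J_k^+\cup J_k^-$ with $J_k^+\in\mathcal W$, for every $i\in\{1,\ldots,R(x)-1\}$ the segment connecting $f^i(x)$ and $f^i(c)$ stays outside $I$; hence Ma\~n\'e's uniform bound \eqref{mane} gives bounded distortion of $f^{R(x)-1}$ on the \emph{whole} segment $[f(x),f(c)]$, not just on $f(J_k^+)$. The Mean Value Theorem then yields directly
\[
|Df^{R(x)-1}(f(x))|\cdot|f(x)-f(c)|\;\sim\;|f^{R(x)}(x)-f^{R(x)}(c)|.
\]
Since $f^{R(x)}(x)\in I$ while $f^{R(x)}(c)$ lies outside the $(1+2\tau)$-scaled copy of $I$, the right-hand side is $\sim 1$. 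This single line replaces your ``hard comparison'' $|f(x)-f(c)|\sim|f(J_k^+)|$ entirely: once you know the product is $\sim 1$, the second assertion follows immediately from your formula for $|Df(x)|$. By restricting to $f(J_k^+)$ you sever the link to the point $f^{R(x)}(c)$ and are forced into a detour you do not complete; your proposed extraction from \eqref{once} and \eqref{R1} would require $D_k/D_{k+1}$ to be bounded away from $1$, which fails whenever $R_{k+1}=R_k$ (this does occur, see property~3 after \eqref{R1}). Ironically, your ``hard part'' is an immediate corollary of the paper's one-line argument, since $|Df^{R(x)-1}(f(x))|\sim |f(J_k^+)|^{-1}$ combined with the product being $\sim1$ gives exactly $|f(x)-f(c)|\sim|f(J_k^+)|$.

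The same observation streamlines the first assertion. Applying the bounded distortion on $[f(x),f(c)]$ and $|f(x)-f(c)|\sim|x-c|^{\ell(x)}$ gives $|x-c|^{\ell(x)}|Df^{R(x)-1}(f(c))|\sim 1$ for the specific point $x$, with no need to compare $x$ to $a_k^+$. Combined with $C_1\lambda^{R(x)-1}\le|Df^{R(x)-1}(f(c))|\le\|Df\|^{R(x)-1}$, taking logarithms yields $R(x)\sim\ell(x)\log|x-c|^{-1}$ directly. Your route through the endpoints works, but it invokes the polynomial form of $\ell$ to get $|I_{k+1}^+|/|I_k^+|$ bounded away from $0$ and $\ell(x)\sim\ell(a_k^+)$; the lemma as stated does not assume polynomial order, and the paper's proof does not need it. (Incidentally, $|I_{k+1}^+|/|I_k^+|\ge 1-\theta_1$ already follows from Lemma~\ref{LemJK}, so that particular appeal to polynomial order is avoidable.)
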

\begin{proof}
Let $x\in I$.
From Ma\~n\'e's hyperbolicity theorem  \cite[Theorem A]{Man85}, the distortion of iterates of $f$ outside of $I$ is uniformly bounded:
 there exists a constant $C=C(I)\geq1$
 such that for every $z\in X$ in between $f(x)$ and $f(c)$,
 \begin{equation}\label{subeq-1}C^{-1}\leq \frac{|Df^{R(x)-1}(z)|}{|Df^{R(x)-1}(f(c))|}\leq C.\end{equation}
Since $c$ is flat, up to $C^1$ changes of coordinates around $c$ and $f(c)$ we have
$f(x)=f(c)-|x-c|^{\ell(x)}$.
From the assumption on $x$, the orbit $f(x),\ldots,f^{R(x)-1}(x)$
follows the critical orbit $f(c),\ldots,f^{R(x)-1}(c)$ and as a result
for every $i\in\{1,\ldots,R(x)-1\}$ the segment connecting $f^i(x)$ and $f^i(c)$
does not intersect $I$.
Using \eqref{subeq-1} we obtain
\begin{equation}\label{criexp}
|f^{R(x)}(x)-f^{R(x)}(c)|\sim|x-c|^{\ell(x)}|Df^{R(x)-1}(f(c))|.\end{equation}
There exist constants $C>0$ and $\lambda_0>0$ such that
\begin{equation}\label{critexp}
Ce^{\lambda_0(R(x)-1)}\leq |Df^{R(x)-1}(f(c))|\leq \|Df\|^{R(x)-1},\end{equation}
where $\|Df\|=\max\{Df(x)\colon x\in X\}$.
Therefore there exists a constant $\tilde C=\tilde C(I)>1$ such that
\begin{equation}\label{subeq0}
\tilde C^{-1}|x-c|^{\ell(x)}e^{\lambda_0(R(x)-1)}\leq |f^{R(x)}(x)-f^{R(x)}(c)|\leq \tilde C|x-c|^{\ell(x)}\|Df\|^{R(x)-1}.\end{equation}
Since $f^{R(x)}(x)\in I$ and $f^{R(x)}(c)$ does not belong to the concentric closed interval with $I$ of length $(1+2\tau)|I|$,
$\tau|I|\leq |f^{R(x)}(x)-f^{R(x)}(c)|$ holds (See the line after \eqref{R1} for the choice of $\tau$). Plugging this into the second inequality in 
\eqref{subeq0} gives a lower estimate of $R(x)$.
Plugging $|f^{R(x)}(x)-f^{R(x)}(c)|\leq1$ into the first inequality in 
\eqref{subeq0} gives an upper estimate of $R(x)$.
These two estimates together imply the desired one.

For the derivative estimate, note that \begin{equation}\label{constraint}
\begin{split}
|Df(x)|&\sim |x-c|^{\ell(x)}\left|D\ell(x)\log|x-c|+\frac{\ell(x)}{x-c}\right|\\
&\sim |f(x)-f(c)|\cdot\left|D\ell(x)\log|x-c|+\frac{\ell(x)}{x-c}\right|.
\end{split}\end{equation}
The assumption on $\ell$ implies that the two terms in the second factor have the same sign:
positive for $x>c$ and negative for $x<c$.
Hence
\begin{align*}
|Df^{R(x)}(x)|\sim  |Df^{R(x)-1}(f(x))| \cdot    |f(x)-f(c)|\cdot\left|D\ell(x)\log|x-c|+\frac{\ell(x)}{x-c}\right|.
\end{align*}
For some $z$ in between $f(x)$ and $f(c)$, $$|f^{R(x)}(x)-f^{R(x)}(c)|=|Df^{R(x)-1}(z)|\cdot|f(x)-f(c)|$$ holds.
From this and \eqref{subeq-1} we obtain
$$|Df^{R(x)-1}(f(x))|\cdot|f(x)-f(c)|\sim |(f^{R(x)})(x)-f^{R(x)}(c)|.$$
This completes the proof of Lemma \ref{P}.
\end{proof}

\begin{remark}
Since $Df(x)\to0$ as $x\to c$, \eqref{constraint} imposes a constraint on $\ell$: $|x-c|^{\ell(x)}D\ell(x)\log|x-c|\to0$ as $x\to c$.
This condition is satisfied for a flat critical point of polynomial order.
\end{remark}

\subsection{On the proof of Theorem B}\label{endb}

\begin{proof}[Proof of Theorem B]
Let $f$ be a topologically exact $S$-unimodal map with a non-recurrent flat critical point $c$
that is of polynomial order.
%Taking renormalizations if necessary, we may assume $f$ is topologically mixing.
Let  $\mu\in\mathcal M(f)$ be a post-critical measure.
To show $\mathscr{I}(\mu)=0$, it suffices to show that $\mu$ is weak* approximated by measures which are 
supported on periodic orbits and with arbitrarily small Lyapunov exponents. Namely,
we construct a sequence $\{n_i\}_{i\geq0}$ of positive integers 
and a sequence $\{x_i\}_{i\geq0}$ in $X$
such that $n_i\to\infty$ as $i\to\infty$, $f^{n_i}(x_i)=x_i$ for each $i$ and the following holds:
\begin{itemize}
 \item[(i)]  for each continuous $\phi \colon X\to\mathbb R$,
 $\displaystyle \left| \int \phi  d\delta_{x_i}^{n_i}  -  \int \phi  d\mu  \right| \to 0$
 as $i\to\infty$;
 \item[(ii)] $\chi(\delta_{x_i}^{n_i}  )\to0$ as $i\to\infty$.
  \end{itemize}

Let $(I,\mathcal W,R)$ be the inducing scheme constructed in the proof of Proposition \ref{nse}.
Let $k\geq0$ be an integer.
If $J_k^+\in\mathcal W$ then the closure of $J_k^+$ contains a periodic point of period $R_{k}$,
denoted by $y_k$.
 Since $c$ is of polynomial order, we have
\begin{equation}\label{lyapunovlimit}\displaystyle{\lim_{x\to c+0}}\frac{\log|D\ell(x)|}{\ell(x)\log|x-c|^{-1}}=0.\end{equation}
 Since $y_k\to c$,
the estimates in Lemma \ref{P} and \eqref{lyapunovlimit} together imply
\begin{equation*}\lim_{k\to\infty}\chi(\delta_{y_k}^{R_k}  )=\lim_{k\to\infty}\frac{1}{R_k}\log|Df^{R_k}(y_k)|=0.\end{equation*}
Moreover, $f^{R_k-1}$ maps $f(I_k)$ diffeomorphically onto its image.
From Ma\~n\'e's hyperbolicity theorem  \cite[Theorem A]{Man85}, for every $\delta>0$ there exists an integer $N(\delta)\geq1$ such that
if $R_k>N(\delta)$ then
$|f^n(I_k)|\leq\delta$ holds for every $n\in\{1,\ldots,R_k-N(\delta)\}$.

%We construct a sequence $\{n_i\}_{i\geq0}$ of integers and 
Since $\mu$ is a post-critical measure,
there exists a sequence $\{m_i\}_{i\geq0}$ of positive integers
such that $m_i\nearrow\infty$ and $\delta_{c}^{m_i}\to\mu$ weakly as $i\to\infty$.
For each $i$ let $\xi(i)\geq0$ be the integer with
$R_{\xi(i)}\leq m_i <R_{\xi(i)+1}$.
If $J_{\xi(i)}^+\in\mathcal W$, then put $n_i=R_{\xi(i)}$ and
$x_i=y_{\xi(i)}$.
If $J_{\xi(i)}^+\notin\mathcal W$, then $J_{\xi(i)+1}^+\in\mathcal W$ holds.
Put $n_i=R_{\xi(i)+1}$ and $x_i=y_{\xi(i)+1}$.
From the construction, 
 $n_i\to\infty$, $f^{n_i}(x_i)=x_i$ and  $\chi(\delta_{x_i}^{n_i}  )\to0$ as $i\to\infty$.

It remains to show (i).
Let $\phi\colon X\to\mathbb R$ be continuous.
For every $\varepsilon>0$ there exists $\delta>0$ such that if $x,y\in X$ and $|x-y|\leq\delta$
then $|\phi(x)-\phi(y)|\leq\varepsilon$.
If $n_i>N(\delta)$, then
$ \left| S_{n_i} \phi(x_i)  -  S_{n_i} \phi(c) \right| \leq n_i\varepsilon+N(\delta)\|\phi\|,$
 where $\|\phi\|=\max_{x\in X}|\phi(x)|$.
Since $|m_i-n_i|\leq R(V_0^+)$ from \eqref{R1},
$ \left| (1/n_i)S_{n_i} \phi(x_i)  -  (1/m_i)S_{m_i} \phi(c) \right| \leq2\varepsilon$
holds for sufficiently large $i$.
In other words, $ \left| (1/n_i)S_{n_i} \phi(x_i)  -  (1/m_i)S_{m_i} \phi(c) \right| \to 0$ as $i\to\infty$.
Since
$ \left| (1/m_i)S_{m_i} \phi(c) -\int \phi d\mu\right| \to 0$ 
it follows that
$ \left| (1/n_i)S_{n_i} \phi(x_i)  - \int \phi d\mu \right| \to 0$.
This completes the proof of Theorem B.
\end{proof}

The next corollary 
is of independent interest.
\begin{cor}\label{lack}
Let $f$ be a topologically transitive $S$-unimodal map with a flat critical point $c$ such that
$$ \displaystyle{\lim_{x\to c}}\frac{\log|D\ell(x)|}{\ell(x)\log|x-c|^{-1}}=0.$$
Then $$\inf\{\chi(\mu)\colon\mu\in\mathcal M(f)\}=0.$$
In addition, there is no measure which minimizes the Lyapunov exponent.
\end{cor}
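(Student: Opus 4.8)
The assertion $\inf\{\chi(\mu)\colon\mu\in\mathcal M(f)\}=0$ should follow exactly as the estimate $\lim_k\chi(\delta_{y_k}^{R_k})=0$ obtained inside the proof of Theorem~B, the hypothesis of the corollary being precisely the substitute for the identity \eqref{lyapunovlimit} used there (which was the only place the polynomial order of $c$ entered). So the plan is: take the inducing scheme $(I,\mathcal W,R)$ of Proposition~\ref{nse}; for each $k\ge0$ with $J_k^+\in\mathcal W$ let $y_k\in\overline{J_k^+}$ be the periodic point of period $R_k$ coming from the contracting inverse branch of $f^{R_k}\colon J_k^+\to I$; note $y_k\to c$; and show $\chi(\delta_{y_k}^{R_k})=\frac1{R_k}\log|Df^{R_k}(y_k)|\to0$ as $k\to\infty$.

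First I would apply Lemma~\ref{P} to $y_k$ (legitimate since $y_k\in J_k^+\cup J_k^-$ and $J_k^+\in\mathcal W$), obtaining $R_k\sim\ell(y_k)\log|y_k-c|^{-1}$ and $|Df^{R_k}(y_k)|\sim|D\ell(y_k)\log|y_k-c|+\ell(y_k)/(y_k-c)|$. Taking logarithms, bounding $\log(|a|+|b|)\le\log2+\max\{\log|a|,\log|b|\}$, and dividing by $R_k$, one sees that $\chi(\delta_{y_k}^{R_k})$ is at most a vanishing error plus the maximum of
$$\frac{\log|D\ell(y_k)|+\log|\log|y_k-c||}{\ell(y_k)\log|y_k-c|^{-1}}\qquad\text{and}\qquad\frac{\log\ell(y_k)}{\ell(y_k)\log|y_k-c|^{-1}}+\frac1{\ell(y_k)}.$$
The first quantity tends to $0$ by the hypothesis $\log|D\ell(x)|=o\bigl(\ell(x)\log|x-c|^{-1}\bigr)$ together with $\log|\log|x-c||=o\bigl(\ell(x)\log|x-c|^{-1}\bigr)$, and the second because $\ell(x)\to\infty$; since $\chi\ge0$ on $\mathcal M(f)$ by Bruin--Keller, this forces $\chi(\delta_{y_k}^{R_k})\to0$, hence $\inf\chi=0$. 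The non-existence of a minimizer is then immediate: a minimizer $\mu_0$ would satisfy $\chi(\mu_0)=0$, contradicting $\chi(\mu_0)>0$, which holds for every $\mu_0\in\mathcal M(f)$ by Lemma~\ref{positive}.

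The main obstacle is that Lemma~\ref{P}, Proposition~\ref{nse}, and Lemma~\ref{positive} were all set up under non-recurrence of $c$, whereas the corollary is stated for an arbitrary topologically transitive $f$. To cover a recurrent flat critical point I would replace the single near-critical excursion of $y_k$ by a concatenation of finitely many excursions, the $j$-th one shadowing the block of the critical orbit between two consecutive close returns to $c$, and obtain the periodic point as a fixed point of the composition of the associated inverse branches; the resulting Lyapunov exponent then takes the shape of the quotient of $\sum_j\log\ell(z_j)$ by $\sum_j\ell(z_j)\log|z_j-c|^{-1}$, over the near-critical points $z_j$ visited by the orbit, which again tends to $0$. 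Keeping the distortion under control along these repeated near-critical passages, and re-proving $\chi>0$ without non-recurrence (Ma\~n\'e off a neighborhood of $c$, and \cite[Theorem 3]{BruTod09} for the liftable part), is where the real work lies; in the non-recurrent case, the one relevant to the rest of the paper, no such difficulty arises.
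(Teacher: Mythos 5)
Your approach is essentially the paper's: the paper's own proof of the corollary is only two sentences, citing the periodic orbits $y_k$ constructed in the proof of Theorem~B (whose Lyapunov exponents tend to zero by Lemma~\ref{P} together with the hypothesis of the corollary, which plays the role of \eqref{lyapunovlimit}) and invoking Lemma~\ref{positive} to rule out a minimizer. Your explicit computation using $\log(|a|+|b|)\le\log 2+\max\{\log|a|,\log|b|\}$ is a faithful unpacking of what ``the estimates in Lemma~\ref{P} and \eqref{lyapunovlimit} together imply'' means, and the conclusion $\inf\chi=0$ follows just as in the paper.

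You are also right to flag the recurrence issue, and this points to a genuine imprecision in the paper. Corollary~\ref{lack} is stated without the non-recurrence hypothesis, yet its proof rests on Proposition~\ref{nse}, Lemma~\ref{P}, and Lemma~\ref{positive}, each of which is proved only under non-recurrence of $c$; the paper's two-sentence proof does not acknowledge this. Most plausibly the word ``non-recurrent'' was inadvertently dropped from the statement (it is the standing hypothesis in all the results surrounding it in Section~4). Your sketch for the recurrent case---concatenating near-critical excursions and estimating the resulting Lyapunov quotient---is plausible as an outline, but as you yourself concede it is not a proof; and the second assertion of the corollary (no minimizer, i.e.\ $\chi(\mu)>0$ for every $\mu\in\mathcal M(f)$) is not supported by anything in the paper once non-recurrence is removed. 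Indeed, for a recurrent flat critical point one would expect post-critical measures with $\chi=0$ to be possible, in which case the corollary's second assertion would be false. The safe fix is to reinstate ``non-recurrent'' in the hypotheses, at which point your argument in the first part is complete and coincides with the paper's.
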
 
\begin{proof}
From the proof of Theorem B, one can create a periodic measure whose Lyapunov exponent is arbitrarily small.
The last assertion follows from the first one and Lemma \ref{positive}.
\end{proof}

\subsection{Outline of the proof of Theorem C}
Theorem C immediately follows from the next proposition.

\begin{prop}\label{c}
Let $f$ be a topologically exact $S$-unimodal map with a non-recurrent flat critical point $c$
that is of polynomial order. Then the following holds:
\begin{itemize}
\item[(i)] Assume $f$ has an acip. If
$\mu\in\mathcal M(f)$ and $\mathscr{I}(\mu)=0$, then either $\mu(\omega(c))=1$ or 
$\mu_{\rm ac}$ is absolutely continuous with respect to $\mu$;
%If moreover $h(\mu)=0$ then $\mu(\omega(c))=1$;

\item[(ii)] Assume $f$ has an acip and the topological entropy of $\omega(c)$ is zero.
 If
$\mu\in\mathcal M(f)$ and $\mathscr{I}(\mu)=0$, then there exist $p\in[0,1]$ and $\nu\in\mathcal M(f)$
such that $\nu(\omega(c))=1$ and $\mu=p\nu+(1-p)\mu_{\rm ac}$;

\item[(iii)] Assume $f$ has no acip. If
$\mu\in\mathcal M(f)$ and $\mathscr{I}(\mu)=0$, then $\mu(\omega(c))=1$.
\end{itemize}
\end{prop}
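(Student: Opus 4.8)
The plan is to prove Proposition \ref{c} by analyzing the structure of a measure $\mu$ with $\mathscr{I}(\mu)=0$ through the lens of the inducing scheme $(I,\mathcal W,R)$ and the variational characterization of $-\mathscr{I}$ as the upper semi-continuous regularization of $\mathscr{F}$. The key dichotomy is whether $\mu(\omega(c))=1$ or $\mu(I)>0$: if $\mu(\omega(c))<1$, then since $\overline I=\overline{\bigcup_{J\in\mathcal W}J}$ and the complement of $\omega(c)$ is covered by pull-backs, we get $\mu(I)>0$, so by Lemma \ref{liftlem} the measure $\mu$ lifts to an $\widehat f$-invariant measure $\widehat\mu$ with $\int R\,d\widehat\mu<\infty$ (indeed $=1$). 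The first step is therefore to reduce to the case $\mu(I)>0$ and pass to the lift.

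Next I would exploit that $-\mathscr{I}(\mu)=0$ means $\mu$ is a weak* limit of measures $\nu_j\in\mathcal M(f)$ with $\mathscr{F}(\nu_j)=h(\nu_j)-\chi(\nu_j)\to 0$, i.e.\ $h(\nu_j)-\chi(\nu_j)\to 0$; since always $h(\nu_j)\le\chi(\nu_j)$ (Ruelle's inequality, or the fact that $\mathscr{F}\le 0$), these are near-equilibrium states for the potential $-\log|Df|$. The strategy is to use the Abramov and Kac formulas to transfer this to the induced system: $h(\nu_j)=\frac{h(\widehat\nu_j)}{\int R\,d\widehat\nu_j}$ and $\chi(\nu_j)=\frac{\int\log|D\widehat f|\,d\widehat\nu_j}{\int R\,d\widehat\nu_j}$, so $h(\widehat\nu_j)-\int\log|D\widehat f|\,d\widehat\nu_j\to 0$ as well (along the subsequence with $\mu(I)>0$, using that $\int R\,d\widehat\nu_j$ is controlled). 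By the expansion and bounded distortion properties the induced map is uniformly expanding with bounded distortion, hence behaves like a (countable) full shift with a Gibbs-Markov structure; there the only equilibrium state for $-\log|D\widehat f|$ with zero free energy is the absolutely continuous invariant measure $\widehat\mu_{\rm ac}$ for $\widehat f$. Pushing down, any weak* limit of the $\mathcal L(\widehat\nu_j)$ must be a convex combination $p\nu+(1-p)\mu_{\rm ac}$ where $\nu$ is carried by the ``escaping'' part, and the escaping part has its mass pushed toward $\omega(c)$ — this is essentially because mass that spends asymptotically all its time outside $I$ during longer and longer induced excursions concentrates on $\overline{\{f^n(c)\}}$'s accumulation, i.e.\ $\omega(c)$. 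This gives (i): either $\mu(\omega(c))=1$ or $\mu\ge (1-p)\mu_{\rm ac}$ with $p<1$, i.e.\ $\mu_{\rm ac}\ll\mu$.

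For part (ii), I would combine (i) with the ergodic decomposition and the entropy hypothesis. Writing an ergodic decomposition $\mu=\int\mu_\omega\,dP(\omega)$, each ergodic component either has support in $\omega(c)$ or has positive $I$-measure. For the components with $\mu_\omega(I)>0$, the near-equilibrium argument forces them to be $\mu_{\rm ac}$ (uniqueness of the acip as an equilibrium state for $-\log|Df|$, which holds here since $\mathscr{F}(\mu_{\rm ac})=0$ and $\mathscr{F}<0$ elsewhere on the liftable measures — this last fact needs the identification of $\mu_{\rm ac}$ as \emph{the} equilibrium state, itself a consequence of the thermodynamic formalism for the induced system). Grouping, $\mu=p\nu+(1-p)\mu_{\rm ac}$ with $\nu(\omega(c))=1$. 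The hypothesis that the topological entropy of $\omega(c)$ vanishes is not needed for this splitting but is what makes the statement clean in the application (it forces $h(\nu)=0$, so $\mathscr{F}(\nu)=-\chi(\nu)$, which combined with $\chi(\nu)=0$ on post-critical-type measures recovers $\mathscr{I}(\nu)=0$). For part (iii), no acip means the $\sigma$-finite acim is infinite, equivalently $\int\log|Df|\,dx=-\infty$; then the induced system has infinite expected return time $\int R\,d\widehat\mu_{\rm ac}=\infty$, so there is no liftable zero of $\mathscr{F}$, and the near-equilibrium sequence cannot have a liftable limit with positive $I$-mass — forcing $\mu(I)=0$, hence $\mu(\omega(c))=1$.

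The main obstacle will be making rigorous the passage from ``$h(\nu_j)-\chi(\nu_j)\to 0$ with $\nu_j\to\mu$'' to a statement about the \emph{lifted} measures, because liftability is not a closed condition: the $\nu_j$ need not be uniformly liftable, and mass can escape to $\omega(c)$ in the limit precisely through excursions of unbounded length. The careful point is to split each $\nu_j$ (or rather its ``induced part'') according to return-time cutoffs, control the entropy/exponent contributions of the long-excursion part via Ma\~n\'e's hyperbolicity theorem and Lemma \ref{P} (which quantifies how $R$ and $|Df^R|$ grow near $c$), and show the long part contributes mass only to $\omega(c)$ while the bounded part converges to a genuine lifted equilibrium state. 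This is the heart of the argument and is where the polynomial-order hypothesis enters, via \eqref{lyapunovlimit} and the estimates of Lemma \ref{P}, to guarantee that the Lyapunov exponent contribution of the near-critical excursions is asymptotically negligible compared to their length.
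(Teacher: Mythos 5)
Your plan is a genuinely different route from the paper's: you propose to lift the near-equilibrium approximating measures to the induced system, apply Abramov/Kac, and exploit uniqueness of the equilibrium state for $-\log|D\widehat f|$ on the Gibbs--Markov tower. The paper never lifts the approximating sequence. Instead it replaces that sequence with an \emph{ergodic} one $\{\mu_k\}$ with $\mathscr F(\mu_k)\to 0$ (Lemma~\ref{new}), proves a quantitative lower bound on the drop of the Lyapunov exponent under weak* convergence (Lemma~\ref{sentak1}: if $\mu_k\to\mu=p\nu+(1-p)\nu_\bot$ with $\nu(\omega(c))=1$, $\nu_\bot(\omega(c))=0$, then $\liminf_k\chi(\mu_k)\geq(1-p)\chi(\nu_\bot)$), and then combines this with upper semi-continuity of entropy and, for parts (i) and (iii), with a theorem of Dobbs and Todd on upper semi-continuity of free energies. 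The zero-entropy hypothesis on $\omega(c)$ in (ii) enters only through the identity $h(\mu)=(1-p)h(\nu_\bot)$, which together with Lemma~\ref{sentak1} squeezes $(1-p)\mathscr F(\nu_\bot)\geq 0$, hence $\mathscr F(\nu_\bot)=0$, and Theorem~\ref{dob} then identifies $\nu_\bot$ with $\mu_{\rm ac}$.

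There is a genuine gap in your argument at exactly the point you would need the paper's Lemma~\ref{sentak1}. You assert in the discussion of (ii) that the ergodic components of $\mu$ carrying positive $I$-mass must equal $\mu_{\rm ac}$ because ``$\mathscr F(\mu_{\rm ac})=0$ and $\mathscr F<0$ elsewhere on the liftable measures.'' That conclusion would require $\mathscr F$ to be upper semi-continuous along the approximating sequence, which it is not: $\mathscr I(\mu)=0$ tells you only that $\mathscr F(\nu_j)\to 0$ for \emph{some} sequence $\nu_j\to\mu$, and the Lyapunov exponent can drop in the limit as mass accumulates near $c$, so the limit $\mu$ (let alone its ergodic components) can easily have $\mathscr F<0$. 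This is precisely why a quantitative bound on the drop of $\chi$ is needed. Your own acknowledgment that ``liftability is not a closed condition'' points at this difficulty, but your proposed fix (split each $\nu_j$ by return-time cutoffs, estimate via Ma\~n\'e and Lemma~\ref{P}) would, if carried out, essentially reprove a version of Lemma~\ref{sentak1} \emph{and} additionally require a uniqueness statement for equilibrium states on the countable-state tower; the paper's route is strictly more economical because it stays with the original dynamics and replaces tower uniqueness with the external Dobbs--Todd free-energy upper-semicontinuity theorem together with Dobbs' characterization of the acip (Theorem~\ref{dob}). As written, your sketch does not close the gap and would not yield a complete proof of (i) or (ii).
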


\begin{proof}[Proof of Theorem C]
Assume $f$ has an acip $\mu_{\rm ac}$. Then $\mathscr{I}(\mu_{\rm ac})=0$ holds.
 Since $\delta(c)$ is a post-critical measure, Theorem B gives $\mathscr{I}(\delta(c))=0$.
 Since entropy and Lyapunov exponents are affine, the set of zeros of $\mathscr{I}$ is a convex set,
 and therefore $\{p\delta(c)+(1-p)\mu_{\rm ac}\colon 0\leq p\leq1\}\subset\{\mu\in\mathcal M(f)\colon \mathscr{I}(\mu)=0\}$. From Proposition \ref{c}(ii), this inclusion is an equality. 
\end{proof}

By the definition of the rate function \eqref{rate}, if $\mathscr{I}(\mu)=0$ then one can take a sequence $\{\nu_k\}_{k\geq0}$
such that $\nu_k\to\mu$ weakly and $\mathscr{F}(\nu_k)\to0$ as $k\to\infty$.
For a proof of Proposition \ref{c} we need to analyze the limit behaviors of entropy and Lyapunov exponent
along a sequence of measures for a fixed map. It is well-known that the entropy is upper semi-continuous, while the Lyapunov exponent is not
lower semi-continuous.
A key ingredient to overcome the lack of lower semi-continuity of Lyapunov exponent is 
Lemma \ref{sentak1} which allows us to bound from below the amount of drop of Lyapunov exponent in the limit.
We prove Proposition \ref{c} by combining these ingredients with the result of Dobbs and Todd \cite{DobTod} on the upper semi-continuity of
free energies applied to a fixed map.

\subsection{Continuity of Lyapunov exponent}\label{continue}
In this and the next subsections we prove key ingredients needed for the proof of Proposition \ref{c}.
The next lemma implies that the Lyapunov exponent is continuous
on the set $\{\mu\in\mathcal M(f)\colon\mu(\omega(c))=0\}$.
\begin{lemma}\label{usc}
Let $f$ be an $S$-unimodal map with a non-recurrent critical point $c$.
Let $\{\mu_n\}_{n\geq0}$ be a sequence of ergodic measures in $\mathcal M(f)$ such that
 $\mu_n\to\mu$ weakly as $n\to\infty$ and $\mu(\omega(c))=0$.
Then $\chi(\mu_n)\to\chi(\mu)$ as $n\to\infty$.
\end{lemma}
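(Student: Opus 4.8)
The plan is to establish the two one-sided bounds $\limsup_{n\to\infty}\chi(\mu_n)\le\chi(\mu)$ and $\liminf_{n\to\infty}\chi(\mu_n)\ge\chi(\mu)$ separately. The first is soft: the function $\log|Df|$, extended to $X$ by $\log|Df(c)|=-\infty$, is upper semi-continuous and, since $f$ is $C^1$, bounded above, so weak convergence $\mu_n\to\mu$ yields $\limsup_n\chi(\mu_n)\le\chi(\mu)$, which is finite and positive by Lemma~\ref{positive}. For the reverse inequality I would use the truncations $g_M:=\max\{\log|Df|,-M\}$, which are continuous and bounded: the open set $\{\log|Df|<-M\}$ is a neighbourhood of $c$ shrinking to $\{c\}$ as $M\to\infty$, one has $\chi(\mu_n)\ge\int g_M\,d\mu_n-\int_{\{\log|Df|<-M\}}|\log|Df||\,d\mu_n$, and $\int g_M\,d\mu_n\to\int g_M\,d\mu\ge\chi(\mu)$ by weak convergence. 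Thus the whole statement reduces to the following uniform integrability near $c$, call it $(\star)$: for every $\varepsilon>0$ there are $\delta>0$ and $n_0$ with $\int_{(c-\delta,c+\delta)}|\log|Df||\,d\mu_n<\varepsilon$ for all $n\ge n_0$; indeed, choosing $M$ so large that $\{\log|Df|<-M\}\subset(c-\delta,c+\delta)$ then gives $\liminf_n\chi(\mu_n)\ge\int g_M\,d\mu-\varepsilon\ge\chi(\mu)-\varepsilon$, and $\varepsilon$ is arbitrary.

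To prove $(\star)$, I would fix a small interval $I=(c-r,c+r)$ with $\overline I\cap\overline{\{f^j(c):j\ge1\}}=\emptyset$ (possible since $c$ is non-recurrent) and let $R$ be the first return time to $I$. Ma\~n\'e's hyperbolicity theorem \cite[Theorem A]{Man85} provides $\lambda>1$ and $C_1>0$ with $|Df^k(y)|\ge C_1\lambda^k$ whenever $y,\dots,f^{k-1}(y)\notin I$. I will need two estimates near $c$, with constants $C,c_0>0$ depending only on $f$ and $I$: (a) from \eqref{constraint} and the polynomial-order form of $\ell$, the factor $\log|D\ell(x)\log|x-c|+\ell(x)/(x-c)|$ has lower order than $\ell(x)\log|x-c|^{-1}$, so $|\log|Df(x)||\le C\,\ell(x)\log|x-c|^{-1}$ for $x$ near $c$; (b) since $f(x)=f(c)-|x-c|^{\ell(x)}$ up to $C^1$ coordinate changes and the critical orbit avoids $\overline I$, applying the Ma\~n\'e estimate along the orbit gives $|f^{i+1}(x)-f^{i+1}(c)|\gtrsim C_1\lambda^i|x-c|^{\ell(x)}$ as long as $f(x),\dots,f^i(x)\notin I$, so the forward orbit of $x$ shadows that of $c$ for at least $c_0\,\ell(x)\log|x-c|^{-1}$ iterates, and in particular $R(x)\ge c_0\,\ell(x)\log|x-c|^{-1}$. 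Note that $\ell(x)\log|x-c|^{-1}\to\infty$ as $x\to c$.

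Now I bring in the hypothesis $\mu(\omega(c))=0$. Given $\varepsilon>0$, outer regularity supplies a closed neighbourhood $F$ of $\omega(c)$ with $\mu(F)$ as small as desired, and the inequality $\limsup_n\mu_n(F)\le\mu(F)$ for closed sets then gives $\mu_n(F)<c_1\varepsilon/C$ for all $n\ge n_0$, where $c_1>0$ below depends only on $f$ and $I$. Since $f^j(c)\to\omega(c)$, the iterates $f^j(c)$ eventually lie well inside $F$; refining (b), for $i$ up to about half the shadowing length the error $|f^i(x)-f^i(c)|$ is still negligible, so one can take $\delta$ so small that for every $x\in(c-\delta,c+\delta)$ at least $c_1\,\ell(x)\log|x-c|^{-1}$ of the first $R(x)$ iterates of $x$ lie in $F$ (the initial transient is absorbed because $\ell(x)\log|x-c|^{-1}$ is large on $(c-\delta,c+\delta)$); writing $N_F(x)$ for this number of iterates, estimate (a) gives $|\log|Df(x)||\le (C/c_1)N_F(x)$ there. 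Finally, for ergodic $\mu_n$ with $\mu_n(I)>0$ (otherwise $(c-\delta,c+\delta)$ is $\mu_n$-null), Kac's formula \cite{Kac47} gives $\int_IR\,d\mu_n=1$, so $N_F\le R\in L^1(\mu_n)$; along a $\mu_n$-generic orbit, successive visits to $(c-\delta,c+\delta)\subset I$ are separated by the corresponding value of $R$, so the blocks of $F$-iterates counted by $N_F$ over distinct excursions are pairwise disjoint, and Birkhoff's theorem — discarding the single incomplete excursion straddling time $N$, whose length is $o(N)$ by Kac — yields $\int_{(c-\delta,c+\delta)}N_F\,d\mu_n\le\mu_n(F)$. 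Therefore $\int_{(c-\delta,c+\delta)}|\log|Df||\,d\mu_n\le(C/c_1)\mu_n(F)<\varepsilon$ for $n\ge n_0$, which is $(\star)$.

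The step I expect to be the main obstacle is precisely this interlocking of scales. A bound on the $\mu_n$-mass of $(c-\delta,c+\delta)$ coming from non-recurrence alone (visits being $R$-separated) is far too crude to control $\int|\log|Df||\,d\mu_n$ near $c$, because $|\log|Df(x)||$ blows up as fast as $R(x)$ there; what makes $(\star)$ work is the sharper fact that an excursion leaving a point deep inside $(c-\delta,c+\delta)$ must spend a comparable number of iterates near $\omega(c)$, and this is the bridge that converts the innocuous-looking hypothesis $\mu(\omega(c))=0$ into uniform smallness of the $\log|Df|$-mass near $c$. Keeping the dependence of the constants straight — $\delta$ depending on $F$ through the requirement that the shadowed orbit stay in $F$, while $C,c_0,c_1,\lambda$ depend only on $f$ and $I$ — is the delicate bookkeeping that the argument hinges on.
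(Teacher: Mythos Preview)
Your argument is correct and rests on the same mechanism as the paper's --- a deep visit to $c$ forces a long block of iterates shadowing the critical orbit, hence lying near $\omega(c)$ --- but the two proofs package this differently. You go through uniform integrability: bound $|\log|Df(x)||$ on $(c-\delta,c+\delta)$ by a constant times $N_F(x)$, the number of subsequent iterates in a thin closed neighbourhood $F$ of $\omega(c)$, and then use the disjointness of excursions together with Birkhoff and $R\in L^1$ to get $\int_{(c-\delta,c+\delta)}N_F\,d\mu_n\le\mu_n(F)$, which is small by Portmanteau. The paper instead introduces a partition of unity $\{\rho_{0,\varepsilon},\rho_{1,\varepsilon}\}$ adapted to a neighbourhood $U_\varepsilon$ of the full orbit closure $\overline{\{f^n(c):n\ge0\}}$, and shows directly that $\int\rho_{0,\varepsilon}\log|Df|\,d\mu_n\ge0$: along each excursion from a small ball $B_\varrho(c)$ one has $|Df^{R(y)}(y)|\ge\|Df\|^{M_\varepsilon}$ (with $M_\varepsilon$ a uniform recovery time and $\varrho$ chosen accordingly), which makes the excursion's total $\rho_{0,\varepsilon}\log|Df|$ contribution nonnegative; then $\mu(\omega(c))=0$ kills the $\rho_{0,\varepsilon}$ piece as $\varepsilon\to0$. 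The paper's route sidesteps your $L^1$/tail bookkeeping (the $R_{\max}=o(N)$ step), while yours makes the role of the hypothesis $\mu(\omega(c))=0$ more explicit and quantitative.

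One remark: your estimate~(a) does not actually need the polynomial-order assumption you invoke. Since the second factor in \eqref{constraint} exceeds $\ell(x)/|x-c|\ge1$ near $c$, one always has $-\log|Df(x)|\le-\log|f(x)-f(c)|+O(1)=\ell(x)\log|x-c|^{-1}+O(1)$, which together with the first estimate of Lemma~\ref{P} gives $|\log|Df(x)||\lesssim R(x)$ in the full generality of the lemma (and the non-flat case is handled by the same inequality with $\ell$ constant).
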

\begin{proof}
Let $(I,\mathcal W,R)$ be an inducing scheme.
For each $\varepsilon>0$ with $\inf\{|f^n(c)-x|\colon x\in I\setminus\{c\},\ n\geq1\}>\varepsilon$ put
$$M_\varepsilon=\sup\{R(x)-\min\{n\geq1\colon |f^n(x)-f^n(c)|>\varepsilon\}\colon x\in I\setminus\{c\}\}.$$
From the non-recurrence of $c$ and Ma\~n\'e's hyperbolicity theorem \cite[Theorem A]{Man85},
$M_\varepsilon<\infty$ holds.
Fix $\varrho=\varrho(\varepsilon)>0$ such that 
$$f(B_{\varrho}(c))\subset B_{\varepsilon}(f(c))\ \text{ and }\
\inf_{x\in B_{\varrho}(c)\setminus \{c\}}|Df^{R(x)}(x)|\geq \|Df\|^{M_\varepsilon}.$$
Note that $M_\varepsilon\to\infty$, and so $\varrho\to0$ as $\varepsilon\to0$.
Put 
$$U_{\varepsilon}=B_{\varrho}(c)\cup \bigcup_{n\geq1}B_{\varepsilon}(f^n(c)).$$
%For each integer $k\geq1$ define
%$$U_k=\left\{x\in X\colon \inf_{y\in \{c\}\cup\omega(c)}|x-y|<\frac{1}{k}\right\}.$$
The $U_\varepsilon$ decreases as $\epsilon\to0$ with
$\bigcap_{\varepsilon>0}U_\varepsilon=\overline{\{f^n(c)\colon n\geq0\}}.$ %\{c\}\cup\omega(c)$.
Fix a partition of unity $\{\rho_{0,\varepsilon},\rho_{1,\varepsilon}\}$ on $X$ such that
${\rm supp}(\rho_{0,\varepsilon})=\overline{\{x\in X\colon\rho_{0,\varepsilon}(x)>0\}}\subset U_{2\varepsilon}$
and
${\rm supp}(\rho_{1,\varepsilon})\subset X\setminus U_{\varepsilon}$.
We have
$\rho_{0,\varepsilon}\equiv1$ on $U_\varepsilon$.

%${\rm supp}(\rho_{0,\varepsilon})=\overline{\{x\in X\colon\rho_{0,\varepsilon}(x)>0\}}\subset U_k$
%and
%${\rm supp}(\rho_{1,\varepsilon})\subset X\setminus U_{2k}$.

Let $\{\mu_n\}_{n\geq0}$ be a sequence of ergodic measures in the statement of Lemma \ref{usc}.
Taking a subsequence if necessary we may assume the limit
$\chi_0=\displaystyle{\lim_{n\to\infty}\chi(\mu_n)}$ exists.

Since $\rho_{1,\varepsilon}\log|Df|$ is continuous and $\mu_n\to\mu$ weakly, 
$$\lim_{n\to\infty}\int\rho_{1,\varepsilon}\log|Df|d\mu_n=\int\rho_{1,\varepsilon}\log|Df|d\mu.$$
Since $\chi(\mu_n)=\int\rho_{0,\varepsilon}\log|Df|d\mu_n+\int\rho_{1,\varepsilon}\log|Df|d\mu_n$,
letting $n\to\infty$ we have
$$\chi_0=\lim_{n\to\infty}\int\rho_{0,\varepsilon}\log|Df|d\mu_n+
\int\rho_{1,\varepsilon}\log|Df|d\mu.$$
In the next two paragraphs below we show
\begin{equation}\label{below}\lim_{n\to\infty}\int\rho_{0,\varepsilon}\log|Df|d\mu_n\leq \int\rho_{0,\varepsilon}\log|Df|d\mu\end{equation}
and
\begin{equation}\label{below'}\lim_{n\to\infty}\int\rho_{0,\varepsilon}\log|Df|d\mu_n\geq0.\end{equation}
Since $\mu(\omega(c))=0$ and $\mu(\overline{\{f^n(c)\colon n\geq0\}}\setminus\omega(c))=0$, $\rho_{0,\varepsilon}\log|Df|\to0$ as $\varepsilon\to0$  $\mu$-a.e. and thus by \eqref{below} \eqref{below'} and Fatou's lemma,
$$0\leq\liminf_{\varepsilon\to0}\lim_{n\to\infty}\int\rho_{0,\varepsilon}\log|Df|d\mu_n\leq \limsup_{\varepsilon\to0}\int\rho_{0,\varepsilon}\log|Df|d\mu\leq0.$$
Also, $\rho_{1,\varepsilon}\log|Df|\to\log|Df|$ as $\varepsilon\to0$ $\mu$-a.e. and 
$$\lim_{\varepsilon\to0}\int\rho_{1,\varepsilon}\log|Df|d\mu=\chi(\mu).$$
Hence we obtain $\chi_0=\chi(\mu)$.

To show \eqref{below}, for each $m\geq1$ define
$g_m=\max\{\rho_{0,\varepsilon}\log|Df|,-m\}.$
Then $g_m$ is continuous,
$g_m\geq\rho_{0,\varepsilon}\log|Df|$ and
$g_m\to\rho_{0,\varepsilon}\log|Df|$ as $m\to\infty$ $\mu$-a.e.
 Since $\rho_{0,\varepsilon}=1$ near $c$,
$\rho_{0,\varepsilon}\log|Df|$ is $\mu$-integrable.
From the Dominated Convergence Theorem, for every $\varepsilon>0$ there exists an integer $m$ such that
$\int g_md\mu\leq\int\rho_{0,\varepsilon}\log|Df|d\mu+\varepsilon$.
Since $\mu_n\to\mu$, for sufficiently large $n$ we have
\begin{align*}
\int \rho_{0,\varepsilon}\log|Df|d\mu_n-\varepsilon&\leq\int g_md\mu_n-\varepsilon\\
&\leq\int g_md\mu\\
&\leq\int\rho_{0,\varepsilon}\log|Df|d\mu+\varepsilon,\end{align*}
and therefore $$\limsup_{n\to\infty}\int\rho_{0,\varepsilon}\log|Df|d\mu_n\leq
\int\rho_{0,\varepsilon}\log|Df|d\mu+2\varepsilon.$$ Since $\varepsilon>0$ is arbitrary, \eqref{below} holds.

It is left to show \eqref{below'}.
Since $\mu_n$ is ergodic, it is possible to choose a point $x_n\in X$ such that $f^m(x_n)\neq c$ for every $m\geq0$ and
$$\lim_{m\to\infty}\frac{1}{m}\sum_{i=0}^{m-1}\rho_{0,\varepsilon}(f^i(x_n))\log|Df(f^i(x_n))|=\int\rho_{0,\varepsilon}\log|Df|d\mu_n.$$
If $f^m(x_n)\in B_{\varrho}(c)$ for only finitely many $m\geq0$, then Ma\~n\'e's hyperbolicity theorem  \cite[Theorem A]{Man85} implies
$\int\rho_{0,\varepsilon}\log|Df|d\mu_n\geq0$. If $f^m(x_n)\in B_{\varrho}(c)$ for infinitely many $m\geq0$,
then the orbit of $x_n$ is a concatenation of segments of the form $y\in B_{\varrho}(c),
f(y),\ldots,f^{R(y)-1}(y)$. For each such a segment,
$$\sum_{i=0}^{R(y)-1}\rho_{0,\varepsilon}(f^i(y))\log|Df(f^i(y))|\geq \log|Df^{R(y)}(y)|-M_\varepsilon\log\|Df\|\geq0.$$
This implies \eqref{below'}.
\end{proof}

\begin{remark}
 The assumption $\mu(\omega(c))=0$ in Lemma \ref{usc} is not removable.
 Indeed, for maps as in Lemma \ref{usc} it is possible to show that  the Lyapunov exponent is not lower semi-continuous
 at each post-critical measure.
\end{remark}

%As the next proposition shows, the assumption $\mu(\omega(c))=0$ in Lemma \ref{usc} is not removable.
 %\begin{prop}
 %Let $f$ be an $S$-unimodal map with non-recurrent flat critical point. 
%The Lyapunov exponent is not lower semi-continuous
%at each post-critical measure.
 %\end{prop}
 
% \begin{proof}
% Let $\mu\in\mathcal M(f)$ be a post-critical measure.
% By Theorem B, $\mathscr{I}(\mu)=0$ holds.
% Hence, there exists a sequence $\{\mu_n\}_{n\geq0}$ in $\mathcal M(f)$
% such that $\mu_n\to\mu$ weakly and $\mathscr{F}(\mu_n)\to0$ as $n\to\infty$.
 %Assume $\displaystyle{\liminf_{n\to\infty}}\chi(\mu_n)\geq \chi(\mu)$. Since 
 %the Lyapunov exponent is upper semi-continuous,
 %$\displaystyle{\limsup_{n\to\infty}}\chi(\mu_n)\leq \chi(\mu)$ and thus $\chi(\mu_n)\to\chi(\mu)$.
 %Using this and the upper semi-continuity of entropy give
 %$0=\displaystyle{\lim_{n\to\infty}}\mathscr{F}(\mu_n)\leq \mathscr{F}(\mu).$
 %From this and Ruelle's inequality \cite{Rue78}, $\mathscr{F}(\mu)=0$ holds.
% By $\chi(\mu)>0$ and Theorem \ref{dob},
%$\mu=\mu_{\rm ac}$. 
%Since $\omega(c)$ is a hyperbolic set and $f$ is $C^2$ on $X\setminus \{c\}$,
 %the Lebesgue measure of $\omega(c)$ is zero and therefore $\mu(\omega(c))=0$.
 %This contradicts the assumption that $\mu$ is a post-critical measure.
 %\end{proof}

%For the rest of this section we assume $f$ is an $S$-unimodal map with a flat critical point $c$ satisfying the Misiurewicz condition,
%and that $f|_{\omega(c)}$ is uniquely ergodic.

\subsection{Limit behavior of Lyapunov exponents}\label{limitb}
The next lemma gives a lower bound
on the amount of drop of Lyapunov exponents
  of measures in the weak* limit.

\begin{lemma}\label{sentak1}
Let $f$ be an $S$-unimodal map with a non-recurrent flat critical point $c$.
Let $\{\mu_k\}_k$ be a sequence of ergodic measures
 in $\mathcal M(f)$ such that
 $\mu_k\to\mu\in\mathcal M(f)$ 
weakly as $k\to\infty $, where
$\mu=p\nu+(1-p)\nu_\bot$, $\nu,\nu_\bot\in\mathcal M(f)$,
$\nu(\omega(c))=1$, $\nu_{\bot}(\omega(c))=0$ and $0\leq p\leq 1$.
Then $$\liminf_{k\to\infty}\chi(\mu_k)\geq(1-p)\chi(\nu_{\bot}).$$
\end{lemma}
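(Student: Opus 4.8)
The plan is to recycle the machinery from the proof of Lemma~\ref{usc}. Fix an inducing scheme $(I,\mathcal W,R)$ and, for each small $\varepsilon>0$, the open neighbourhood $U_\varepsilon$ of $\overline{\{f^n(c)\colon n\ge0\}}$ together with the partition of unity $\{\rho_{0,\varepsilon},\rho_{1,\varepsilon}\}$ constructed there, so that $\rho_{0,\varepsilon}\equiv1$ on $U_\varepsilon\supset\omega(c)$, ${\rm supp}(\rho_{1,\varepsilon})\subset X\setminus U_\varepsilon$, and $\rho_{1,\varepsilon}\log|Df|$ is continuous, hence bounded, on $X$. For each $k$ I would split $\chi(\mu_k)=\int\rho_{0,\varepsilon}\log|Df|\,d\mu_k+\int\rho_{1,\varepsilon}\log|Df|\,d\mu_k$ and discard the first term by means of the inequality $\int\rho_{0,\varepsilon}\log|Df|\,d\mu_k\ge0$. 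This is exactly the content of \eqref{below'}; moreover its proof in Lemma~\ref{usc} uses nothing about the measure beyond ergodicity (together with non-recurrence and Ma\~n\'e's hyperbolicity theorem), and in fact gives the pointwise bound $\int\rho_{0,\varepsilon}\log|Df|\,d\mu_k\ge0$ for each $k$, so it applies verbatim to every member of the present sequence, including those with $\mu_k(\omega(c))>0$.

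Next I would let $k\to\infty$ at fixed $\varepsilon$ in the surviving term. Since $\rho_{1,\varepsilon}\log|Df|$ is continuous and bounded and $\mu_k\to\mu$ weakly, $\int\rho_{1,\varepsilon}\log|Df|\,d\mu_k\to\int\rho_{1,\varepsilon}\log|Df|\,d\mu$. Writing $\mu=p\nu+(1-p)\nu_\bot$ and using that $\rho_{1,\varepsilon}$ vanishes on the closed set $\omega(c)\supset{\rm supp}(\nu)$, the $\nu$-contribution is $0$, and therefore $\liminf_{k\to\infty}\chi(\mu_k)\ge(1-p)\int\rho_{1,\varepsilon}\log|Df|\,d\nu_\bot$.

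Finally I would let $\varepsilon\to0$. We may assume $\chi(\nu_\bot)<\infty$; indeed $0<\chi(\nu_\bot)\le\log\|Df\|$ by Lemma~\ref{positive}, so $\log|Df|\in L^1(\nu_\bot)$. As in the proof of Lemma~\ref{usc}, $\bigcap_{\varepsilon>0}U_\varepsilon=\overline{\{f^n(c)\colon n\ge0\}}$ and $\nu_\bot\bigl(\overline{\{f^n(c)\colon n\ge0\}}\bigr)=0$, because $\nu_\bot(\omega(c))=0$ and $\nu_\bot\bigl(\overline{\{f^n(c)\colon n\ge0\}}\setminus\omega(c)\bigr)=0$ (an invariant probability measure can charge the forward critical orbit only along periodic orbits, which lie in $\omega(c)$). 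Hence $\rho_{1,\varepsilon}\log|Df|\to\log|Df|$ $\nu_\bot$-a.e.\ as $\varepsilon\to0$, and since $\bigl|\rho_{1,\varepsilon}\log|Df|\bigr|\le\bigl|\log|Df|\bigr|$, the Dominated Convergence Theorem yields $\int\rho_{1,\varepsilon}\log|Df|\,d\nu_\bot\to\chi(\nu_\bot)$. Combining with the previous paragraph gives $\liminf_{k\to\infty}\chi(\mu_k)\ge(1-p)\chi(\nu_\bot)$.

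I expect the first step to be the only genuine difficulty: the uniform lower bound $\int\rho_{0,\varepsilon}\log|Df|\,d\mu_k\ge0$ on the contribution of the critical zone, valid for every ergodic $\mu_k$ regardless of its mass on $\omega(c)$. This is where the delicate choice of the radius $\varrho(\varepsilon)$ of the ball around $c$ inside $U_\varepsilon$ is essential: it is taken so small that every excursion into $B_{\varrho(\varepsilon)}(c)$ returns to $I$ with derivative growth at least $\|Df\|^{M_\varepsilon}$, which offsets the loss $M_\varepsilon\log\|Df\|$ incurred while shadowing the initial stretch of the critical orbit. Since this balancing argument is already carried out in the proof of Lemma~\ref{usc} and never used the hypothesis $\mu(\omega(c))=0$, here it suffices to invoke it; the remaining steps are soft, relying only on weak convergence and dominated convergence.
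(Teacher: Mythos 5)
The decisive step of your proposal is the bound $\int\rho_{0,\varepsilon}\log|Df|\,d\mu_k\ge0$, which you justify by invoking \eqref{below'}. This is where the argument breaks. The estimate proved inside Lemma~\ref{usc} controls only the iterates that lie on a segment $y,f(y),\dots,f^{R(y)-1}(y)$ with $y\in B_{\varrho}(c)$: for such a segment the shadowing bound $\sum_i\rho_{0,\varepsilon}(f^i y)\log|Df(f^i y)|\ge\log|Df^{R(y)}(y)|-M_\varepsilon\log\|Df\|\ge0$ is available. But $\rho_{0,\varepsilon}$ is also strictly positive on $\bigcup_{n\ge1}B_{2\varepsilon}(f^n(c))$, where $\log|Df|$ can be negative (nothing rules out $|Df(f^n(c))|<1$ for some $n$), and the orbit of a $\mu_k$-generic point may spend a great many iterates in such neighbourhoods without a recent passage through $B_{\varrho}(c)$. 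Those iterates fall into the gaps between the segments starting in $B_\varrho(c)$, and nothing in the argument of \eqref{below'} bounds their contribution from below. This is not a cosmetic issue here: as $k\to\infty$ the measures $\mu_k$ pile up mass near $\omega(c)\subset\overline{\{f^n(c)\colon n\ge1\}}$ precisely because $p>0$ is allowed, so the uncontrolled part of the integral can dominate.

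The paper's proof of this lemma does not reuse $U_\varepsilon$ at all; it builds new sets $V_m$ which are designed to have the missing \emph{escape-expansion} property: if $x,f(x),\dots,f^{q-1}(x)\in V_m$ and $f^q(x)\notin V_m$, then $|Df^q(x)|\ge1$. The crucial new ingredient is that for $n\in S=\{n\ge1\colon |Df(f^n(c))|<2\}$ the ball $B_{1/m}(f^n(c))$ is replaced by the forward ``flow'' $\bigcup_{i=0}^{k(n)-1}f^i(B_{1/m}(f^n(c)))$, with $k(n)=\min\{i>1\colon|Df^i(f^n(c))|\ge2\}$, so that every exit from a neighbourhood of the post-critical orbit is accompanied by cumulative expansion; the non-recurrence of $c$ guarantees $\sup_{n\in S}k(n)<\infty$ so that $\bigcap_m V_m$ is still the closed post-critical orbit. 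With this property one proves that, along the subsequence of exit times, $\sum\log|Df|\ge\sum\varphi_m-\sum 1_{V_m}\varphi_m$, and then integrates. The sets $U_\varepsilon$ of Lemma~\ref{usc} carry no analogous guarantee near the points $f^n(c)$, so the inequality you want to import is exactly the one the paper's $V_m$ machinery was built to supply. Your steps after this point (weak convergence, splitting off the $\nu$-part since $\rho_{1,\varepsilon}$ vanishes on $\omega(c)$, dominated convergence as $\varepsilon\to0$) are fine, but they rest on the unjustified first step.
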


\begin{proof}
If there exist infinitely many $k$ such that 
the support of $\mu_k$ is contained in $\omega(c)$, then $p=1$
and the inequality holds.
In what follows we assume the number of such $k$ is finite.

For $x\in X$ and $r>0$ define $B_r(x)=[x-r,x+r]\cap X$.
For each integer $m\geq1$
fix $\alpha_m>0$ such that $B_{\alpha_m}(c)\cap\omega(c)=\emptyset$, $\alpha_m\to0$ as $m\to\infty$ and
$$\inf_{x\in B_{\alpha_m}(c)\setminus\{c\}}\left|D\ell(x)\log|x-c|+\frac{\ell(x)}{x-c}\right|\geq m^2.$$
We have used the assumptions on $\ell$.
Set $S=\{n\geq1\colon |Df(f^n(c))|<2\}.$
For each $n\in S$ define 
$k(n)=\min\{i>1\colon |Df^{i}(f^n(c))|\geq2\}.$
Set
$$V_m=B_{\alpha_m}(c)\bigcup\left(\bigcup_{n\in S}
\bigcup_{i=0}^{k(n)-1}f^{i}(B_{1/m}(f^{n}(c)))\right)\bigcup\left(\bigcup_{n\notin S} B_{1/m}(f^n(c))\right).$$
The non-recurrence of $c$ implies $\sup_{n\in S}k(n)<\infty$ and we have $$\bigcap_{m\geq1}V_m=\overline{\{f^n(c)\colon n\geq0\}}.$$ 
From the bounded distortion, the following holds for 
  sufficiently large $m$: 
  for every $n\in S$ and every
$x\in B_{1/m}(f^n(c))$, $|Df^{k(n)}(x)|\geq1$.
For every $n\geq1$ such that $n\notin S$ and every 
$x\in B_{1/m}(f^n(c))$, $|Df(x)|\geq1$.

Note that $\{V_m\}_{m\geq1}$ has the following property: 
there exists $m_0\geq1$
such that  if $m\geq m_0$, $x\in X$ and $q\geq1$ are such that 
$x,f(x),\ldots,f^{q-1}(x)\in V_m$ and $f^{q}(x)\notin V_m$, then $|Df^{q}(x)|\geq 1.$
If $x\in B_{1/m}(f^n(c))$ holds for some $n\geq1$, then this 
follows from the definition of $V_m$.
If $x\in B_{\alpha_m}(c)$, then
 since $\bigcup_{n\geq1}B_{1/m}(f^n(c))\subset V_m,$
$|f^q(x)-f^q(c)|\geq1/m$ holds.
Hence \begin{align*}
|Df^{q}(x)|&=|Df^{q-1}(f(x))|\cdot|Df(x)|\\
&\sim |Df^{q-1}(f(x))|  \cdot   |f(x)-f(c)|\cdot\left|D\ell(x)\log|x-c|+\frac{\ell(x)}{x-c}\right|\\
&\sim |f^q(x)-f^q(c)|\cdot\left|D\ell(x)\log|x-c|+\frac{\ell(x)}{x-c}\right|.
\end{align*}
This number is comparable to $m$, and therefore
$|Df^q(x)|\geq1$ provided $m$ is sufficiently large.

For each $m\geq m_0$ such that $|Df|<1$ on $B_{\alpha_m} (c)$,
define a continuous function $\varphi_m\colon X\to\mathbb R$ by
 $$\varphi_{m}(x)=\begin{cases}\max\{\log|Df(x)|,-m\}&\text{ if $x\in B_{\alpha_m}(c)$};\\
\log|Df(x)|&\text{ otherwise.}\end{cases}$$
Let $1_m$ denote the indicator function of $V_m$.
For each $\mu_k$ take a point $x_k\in X$ such that the following holds:
$$\lim_{n\to\infty}\frac{1}{n}\sum_{i=0}^{n-1}\log|Df(f^i(x_k))|= \chi(\mu_k);$$
$$\lim_{n\to\infty}\frac{1}{n}\sum_{i=0}^{n-1}1_m(f^i(x_k)\varphi_m(f^i(x_k))= \int1_m\varphi_md\mu_k;$$
$$\lim_{n\to\infty}\frac{1}{n}\sum_{i=0}^{n-1}\phi(f^i(x_k))= \int\phi d\mu_k
\enspace\text{for every continuous $\phi\colon X\to\mathbb R$}.$$
%$$\lim_{N\to\infty}\frac{1}{N}\sum_{n=0}^{N-1}\rho_m(f^n(x_k))= \int\rho_md\mu_k;$$
Since the support of  $\mu_k$ is not contained in $\omega(c)$ by the initial assumption,
for every sufficiently large $m$, $f^n(x_k)\notin V_m$ holds for infinitely many $n\geq0$.
Let $\{n_l\}_{l\geq1}$ denote the subsequence obtained by aligning the elements of the set
$\{n\geq0\colon f^n(x_k)\notin V_m\}$ in the increasing order.
The afore-mentioned property of $\{V_m\}_{m\geq1}$ implies
\begin{align*}\sum_{n=0}^{n_l-1}\log|Df(f^n(x_k))|&\geq
\sum_{\stackrel{0\leq n\leq n_l-1}{f^n(x_k)\notin V_m}}\log|Df(f^n(x_k))|\\
&=\sum_{\stackrel{0\leq n\leq n_l-1}{f^n(x_k)\notin V_m}}\varphi_m(f^n(x_k))\\
&=\sum_{n=0}^{n_l-1}\varphi_m(f^n(x_k))
-\sum_{n=0}^{n_l-1}1_m(f^n(x_k))\varphi_m(f^n(x_k)).
\end{align*}
On the second summand of the last line, 
\begin{align*}\lim_{l\to\infty}\frac{1}{n_l}\sum_{n=0}^{n_l-1}1_m(f^n(x_k))\varphi_m(f^n(x_k))
&=\int 1_m\varphi_md\mu_k\\
&=\int_{V_m}\varphi_md\mu_k\\
&\leq\int_{V_m\setminus B_{\alpha_m}(c)}\varphi_md\mu_k\\
&=\int_{V_m\setminus B_{\alpha_m}(c)}\log|Df|d\mu_k.
\end{align*}
The inequality holds provided $m$ is sufficiently large so that $\varphi_m$ is negative on $B_{\alpha_m} (c)$.
Hence
\begin{align*}\chi(\mu_k)
&=\lim_{l\to\infty}\frac{1}{n_l}\sum_{n=0}^{n_l-1}\log|Df(f^n(x_k))|\\
&\geq \int\varphi_md\mu_k-\int_{V_m\setminus B_{\alpha_m}(c)}\log|Df|d\mu_k.\end{align*}
Since $\mu_k\to\mu$ weakly as $k\to\infty$, 
$$\liminf_{k\to\infty}\chi(\mu_k)\geq \int\varphi_md\mu-   p\int_{V_m\setminus B_{\alpha_m}(c)}\log|Df|d\nu-(1-p)\int_{V_m\setminus 
B_{\alpha_m}(c)}\log|Df|d\nu_\bot.$$
From the Dominated Convergence Theorem, $\int\varphi_{m,1}d\mu\to\chi(\mu)$
as $m\to\infty$. Since $\omega(c)$ is contained in $V_m\setminus B_{\alpha_m}(c)$,
the second integral is equal to $\chi(\nu)$.
From $\nu_\bot(\omega(c))=0$ and  $\nu_\bot(\overline{\{f^n(c)\colon n\geq1\}}\setminus\omega(c))=0$,
the third integral goes to $0$ as $m\to\infty$.
This finishes the proof of Lemma \ref{sentak1}.
\end{proof}

\subsection{Approximation by ergodic measures}\label{approx}
We need Dobbs' extension \cite{Dob14}
of Ledrappier's characterization of acips \cite{Led81}.
 \begin{thm}{\rm (c.f. \cite[Theorem 1.5]{Dob14})}\label{dob}
 Let $f$ be an $S$-unimodal map. A measure $\mu\in\mathcal M(f)$ with $\chi(\mu)>0$ 
 is an acip if and only if $\mathscr{F}(\mu)=0$.
 \end{thm}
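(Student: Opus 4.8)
This is the Ledrappier characterization of absolutely continuous invariant measures \cite{Led81}, in the form extended by Dobbs \cite{Dob14} to $S$-unimodal maps with a (possibly flat) critical point. The plan is to prove the two implications separately. The implication from absolute continuity to $\mathscr F(\mu)=0$ is the classical easy half: if $\mu\in\mathcal M(f)$ is absolutely continuous with respect to the Lebesgue measure, then $\mu(\{c\})=0$ and Rokhlin's entropy formula for the endomorphism $f$ gives $h(\mu)=\int\log|Df|\,d\mu=\chi(\mu)$, the integral being finite because $\mu$ is a probability measure with $\int\log^{-}|Df|\,d\mu<\infty$ (an acip being a \emph{finite} absolutely continuous invariant measure); hence $\mathscr F(\mu)=0$.

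For the converse, assume $\mathscr F(\mu)=0$ and $\chi(\mu)>0$, so that $h(\mu)=\chi(\mu)>0$. By the ergodic decomposition, the affinity of $h$ and $\chi$, and the inequality $\mathscr F\le0$, we may assume $\mu$ ergodic. Let $\mathcal P=\{[0,c),(c,1]\}$, a generating partition for the ergodic system $(X,f,\mu)$; the atoms of $\mathcal P^{(n)}=\bigvee_{i=0}^{n-1}f^{-i}\mathcal P$ are the maximal intervals of monotonicity of $f^{n}$, and we write $P_{n}(x)$ for the atom containing $x$. By the Shannon--McMillan--Breiman theorem, $-\frac1n\log\mu(P_{n}(x))\to h(\mu)$ for $\mu$-a.e.\ $x$. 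Using the Koebe principle and Ma\~n\'e's hyperbolicity theorem \cite{Man85} to control the iterates that stay away from $c$, together with the hypotheses on $f$ at the critical point, one shows that for $\mu$-a.e.\ $x$ the image $f^{n}(P_{n}(x))$ has length bounded below along a sequence of $n$ of density one, whence $\frac1n\log|P_{n}(x)|^{-1}\to\chi(\mu)$ by Birkhoff's theorem applied to $\log|Df|$. Combining, $\mathscr F(\mu)=0$ gives $\frac1n\log\bigl(\mu(P_{n}(x))/|P_{n}(x)|\bigr)\to0$ for $\mu$-a.e.\ $x$.

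To promote this sub-exponential control to absolute continuity I would follow Ledrappier's scheme: pass to the natural extension of $(X,f,\mu)$ and estimate, along a full-measure set of backward orbits, the density ratios $\mu(P_{n}(x))/|P_{n}(x)|$ through the inverse branches of $f^{n}$, which contract because $\chi(\mu)>0$ and have uniformly bounded distortion. A telescoping argument, in which the leftover entropy term is absorbed using the equality $h(\mu)=\chi(\mu)$, then shows that $D(x)=\limsup_{n}\mu(P_{n}(x))/|P_{n}(x)|$ is finite for $\mu$-a.e.\ $x$; a Lebesgue-density (differentiation of measures) argument now forces $\mu$ to be absolutely continuous, and by uniqueness of the acip $\mu=\mu_{\mathrm{ac}}$.

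The hard part is the last step: bounding $\mu(P_{n}(x))/|P_{n}(x)|$ uniformly in $n$ along orbit segments that approach the flat critical point arbitrarily closely, where $|Df|$ degenerates faster than any polynomial and the distortion estimate available for non-flat critical points is unavailable. Overcoming this is exactly the content of Dobbs' extension \cite{Dob14} of Ledrappier's theorem, where the negative Schwarzian derivative and the order of the critical point are used; I would invoke \cite[Theorem 1.5]{Dob14} to complete this step.
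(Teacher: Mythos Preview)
The paper does not prove this statement at all: it is quoted verbatim as a result from the literature, with the attribution ``(c.f.\ \cite[Theorem~1.5]{Dob14})'' in the theorem header and the preceding sentence ``We need Dobbs' extension \cite{Dob14} of Ledrappier's characterization of acips \cite{Led81}.'' There is nothing to compare against.

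Your sketch of the argument is a reasonable outline of the Ledrappier--Dobbs proof, and the easy direction via Rokhlin's formula is fine. However, your final paragraph is circular: the statement you are asked to prove \emph{is} \cite[Theorem~1.5]{Dob14}, so invoking that same theorem to handle the flat-critical-point step is not a proof but a restatement of the citation. If your intention was simply to indicate that this is a quoted result and to explain informally why it holds, that is exactly what the paper does (minus the informal explanation). If instead you intended to supply an independent proof, then the genuine content --- controlling the ratios $\mu(P_n(x))/|P_n(x)|$ near a flat critical point without the polynomial distortion bound --- is precisely what you have deferred back to Dobbs, and the proposal is incomplete as a proof.
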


The next lemma asserts that the zeros of the rate function $\mathscr{I}$ are approximated by ergodic measures
with similar free energies.
This conclusion is necessary to use Lemma \ref{sentak1}.
\begin{lemma}\label{new}
Let $f$ be a topologically exact $S$-unimodal map with non-recurrent critical point.
 Let $\mu\in\mathcal M(f)$ and suppose $\mathscr{I}(\mu)=0$.
 There exists a sequence
 $\{\mu_k\}_{k\geq0}$
 in $\mathcal M(f)$ such that each $\mu_k$ is ergodic,
 $\mu_k\to\mu$ weakly
and $\mathscr{F}(\mu_k)\to0$  as $k\to\infty $.
\end{lemma}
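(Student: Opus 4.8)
The plan is to start from the sequence of (a priori non-ergodic) measures furnished by the definition \eqref{rate} of the rate function and to upgrade it to a sequence of ergodic measures, exploiting that the Lyapunov exponent can only drop in a weak$^{*}$ limit. Fix a metric $d$ on $\mathcal M$ inducing the weak$^{*}$ topology. From \eqref{rate} and $\mathscr I(\mu)=0$ one has $\sup_{\nu\in\mathcal G}\mathscr F(\nu)=0$ for every open $\mathcal G\ni\mu$, whence, as already observed above, there is a sequence $\{\nu_{k}\}_{k\ge 0}$ in $\mathcal M(f)$ with $\nu_{k}\to\mu$ weakly and $\mathscr F(\nu_{k})\to 0$. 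The only regularity of the free energy I would use is that $\chi\colon\mathcal M\to[-\infty,\infty)$ is upper semi-continuous: $\log|Df|$ is continuous as a map into $[-\infty,\infty)$ --- it equals $-\infty$ at $c$ and $\log|Df(x)|\to-\infty$ as $x\to c$ since $c$ is flat --- so $\chi(\nu)=\int\log|Df|\,d\nu=\inf_{m\ge 1}\int\max\{\log|Df|,-m\}\,d\nu$ is an infimum of weak$^{*}$-continuous functions. I would then reduce the lemma to the claim that for every $\nu\in\mathcal M(f)$ and every $\varepsilon>0$ there exists an ergodic $\mu'\in\mathcal M(f)$ with $d(\mu',\nu)<\varepsilon$ and $h(\mu')>h(\nu)-\varepsilon$.

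Granting this claim, the lemma follows by a diagonal argument. For each $j\ge 1$ pick $k_{j}$ with $\mathscr F(\nu_{k_{j}})>-1/j$ and $d(\nu_{k_{j}},\mu)<1/j$; by upper semi-continuity of $\chi$ pick $\delta_{j}\in(0,1/j)$ with $\chi(\eta)<\chi(\nu_{k_{j}})+1/j$ whenever $d(\eta,\nu_{k_{j}})<\delta_{j}$; and by the claim (with $\nu=\nu_{k_{j}}$, $\varepsilon=\delta_{j}$) pick an ergodic $\mu_{j}\in\mathcal M(f)$ with $d(\mu_{j},\nu_{k_{j}})<\delta_{j}$ and $h(\mu_{j})>h(\nu_{k_{j}})-\delta_{j}$. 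Since $\mu_{j}\in\mathcal M(f)$ gives $\mathscr F(\mu_{j})\le 0$, while
\[
\mathscr F(\mu_{j})=h(\mu_{j})-\chi(\mu_{j})>\bigl(h(\nu_{k_{j}})-\delta_{j}\bigr)-\bigl(\chi(\nu_{k_{j}})+\tfrac1j\bigr)\ge\mathscr F(\nu_{k_{j}})-\tfrac2j>-\tfrac3j ,
\]
we get $\mathscr F(\mu_{j})\to 0$; and $d(\mu_{j},\mu)\le d(\mu_{j},\nu_{k_{j}})+d(\nu_{k_{j}},\mu)<2/j\to 0$, so $\mu_{j}\to\mu$ weakly. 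Thus $\{\mu_{j}\}_{j\ge 1}$ is the desired sequence.

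To prove the claim I would first pass to finite convex combinations of ergodic measures. From an ergodic decomposition $\nu=\int\nu_{\omega}\,d\tau(\omega)$, and using that $h$ is affine, bounded and Borel on $\mathcal M(f)$, partition $\mathcal M$ into finitely many Borel pieces of small diameter and on each piece of positive $\tau$-mass choose a representative at which $h$ is no less than the $\tau$-average of $h$ over that piece; this yields ergodic $\nu_{1},\dots,\nu_{N}\in\mathcal M(f)$ and $p_{1},\dots,p_{N}\ge 0$ with $\sum_{i}p_{i}=1$, $\sum_{i}p_{i}h(\nu_{i})\ge h(\nu)$ and $d\bigl(\sum_{i}p_{i}\nu_{i},\nu\bigr)<\varepsilon/3$. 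Each $\nu_{i}$ is hyperbolic by Lemma \ref{positive}, so Katok's horseshoe approximation for the $C^{1+\alpha}$ map $f$ (for a component with $h(\nu_{i})=0$, the Pesin closing lemma suffices) supplies, for any prescribed $\delta>0$, a transitive uniformly hyperbolic set $\Lambda_{i}\subset X\setminus\{c\}$ carrying an ergodic $m_{i}\in\mathcal M(f)$ with $d(m_{i},\nu_{i})<\delta$ and $h(m_{i})>h(\nu_{i})-\delta$.

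The remaining step --- which I expect to be the main obstacle --- is to glue $\Lambda_{1},\dots,\Lambda_{N}$ into a single ergodic measure close to $\sum_{i}p_{i}m_{i}$ with entropy almost $\sum_{i}p_{i}h(m_{i})$: the ergodic components of $\nu$, hence of the original $\nu_{k}$, may be spread over $X$, so one is forced to concatenate orbit pieces visiting several a priori unrelated hyperbolic sets, and one must check that the connecting pieces cost only a vanishing fraction of the entropy. Since $f$ is topologically exact, hence topologically mixing, and the $\Lambda_{i}$ are uniformly hyperbolic and bounded away from $c$, a shadowing argument --- in the spirit of Lemma \ref{horse}, realising the outcome as a locally maximal invariant subset of a finite union of diffeomorphic pull-backs of a small interval --- produces a transitive uniformly hyperbolic set $\Lambda\subset X\setminus\{c\}$ whose coded dynamics contains a copy of each $\Lambda_{i}$ together with finitely many bounded connecting words; by the specification property of $\Lambda$ one builds an ergodic $\mu'\in\mathcal M(f)$ supported on $\Lambda$ spending asymptotically a fraction $p_{i}$ of the time near $\Lambda_{i}$, and, letting the connecting words recur with vanishing frequency, obtains $d\bigl(\mu',\sum_{i}p_{i}m_{i}\bigr)<\varepsilon/3$ and $h(\mu')\ge\sum_{i}p_{i}h(m_{i})-\varepsilon/3$; taking $\delta$ small and combining the three estimates yields the claim. (Alternatively, the claim is precisely the entropy-density of ergodic measures in $\mathcal M(f)$, which can also be extracted from the Hofbauer--Markov extension of $f$, a topologically transitive countable Markov shift on which entropy-density of ergodic measures is classical, together with the liftability of positive-entropy measures.)
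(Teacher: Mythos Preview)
Your proof is correct and in fact more streamlined than the paper's. Both arguments start from a sequence $\nu_k\to\mu$ with $\mathscr{F}(\nu_k)\to 0$ and both rely on entropy-density of ergodic measures (the paper cites \cite{EKW94} directly, using that topological exactness implies specification; your hand-built Katok-plus-gluing argument is a sketch of the same fact, and your parenthetical alternative is exactly what the paper does). The real difference lies in how the Lyapunov exponent is controlled along the ergodic approximants. You observe that $\chi$ is upper semi-continuous on all of $\mathcal M$ --- immediate from $\log|Df|\colon X\to[-\infty,\infty)$ being continuous --- and this one-sided bound, together with $\mathscr{F}\le 0$, is already enough to force $\mathscr{F}(\mu_j)\to 0$. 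The paper instead splits each $\nu_k$ as $p_k\nu_k'+(1-p_k)\nu_{k,\bot}$ with $\nu_k'$ supported on $\omega(c)$ and $\nu_{k,\bot}(\omega(c))=0$, argues that $p_k\to 0$ (using that $\omega(c)$ is hyperbolic and cannot carry an acip, via Theorem~\ref{dob}), and then invokes the genuine continuity result Lemma~\ref{usc} at the measures $\nu_{k,\bot}$. Your route bypasses this decomposition and the appeal to Lemma~\ref{usc} entirely; the paper's detour is not needed for this lemma, though the decomposition technique reappears elsewhere (Lemma~\ref{sentak1}, Proposition~\ref{c}). One small caveat: your direct horseshoe-gluing proof of entropy-density is only outlined and, as you yourself flag, the concatenation step is the delicate part; since you also note the result is citable, this is not a gap.
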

\begin{proof}
Since $\mathscr{I}(\mu)=0$ it is possible to
take a sequence $\{\xi_k\}_{k\geq0}$ in $\mathcal M(f)$ such that $\mathscr{F}(\xi_k)\to0$ and $\xi_k\to\mu$
weakly as $k\to\infty$. 
Write  $\xi_k=p_{k}\nu_{k}+(1-p_k)\nu_{k,\bot}$ %$\xi_n=u_n\eta_n+(1-u_n)\delta(c)$ 
where 
$0\leq p_k\leq1$, $\nu_{k},\nu_{k,\bot}\in\mathcal M(f)$, $\nu_{k}(\omega(c))=1$ and
 $\nu_{k,\bot}(\omega(c))=0$. Taking a subsequence if necessary we may assume $p_k\to p$
 as $k\to\infty$.
   Ruelle's inequality \cite{Rue78} implies
 $$\limsup_{k\to\infty}\mathscr{F}(\xi_k)\leq p\limsup_{k\to\infty}\mathscr{F}(\nu_{k})\leq0.$$
 Since $\mathscr{F}(\xi_k)\to0$, the second inequality is an equality, namely
$p\displaystyle{\limsup_{k\to\infty}}\mathscr{F}(\nu_{k})=0$.
If $p\neq0$ then $\displaystyle{\limsup_{k\to\infty}}\mathscr{F}(\nu_{k})=0$.
Since $\omega(c)$ is a hyperbolic set, $\mathscr{F}$ is upper semi-continuous on the set of measures
supported on $\omega(c)$. Hence, there exists $\nu\in\mathcal M(f)$ such that
 $\nu(\omega(c))=1$ and $\mathscr{F}(\nu)=0$. 
 By Theorem \ref{dob}, $\nu$ is absolutely continuous with respect to the Lebesgue measure. 
 This is a contradiction. 
  %If $u<1$, then
%  Ruelle's inequality \cite{Rue78} implies
% $$\limsup_{n\to\infty}\mathscr{F}(\xi_n)\leq(1-u)\mathscr{F}(\delta(c))<0$$
 Hence $p=0$, and thus
$|\mathscr{F}(\xi_k)-\mathscr{F}(\nu_k)|\to0$ and $\nu_k\to\mu$
as $k\to\infty$.
Since $f$ is topologically exact, it has the specification. Then, ergodic measures are entropy-dense
\cite{EKW94}: if $\mu\in\mathcal M(f)$ is non-ergodic, there exists a sequence $\{\mu_l\}_{l\geq0}$ in 
$\mathcal M(f)$ 
such that each $\mu_l$ is ergodic, $\mu_l\to\mu$ weakly and $h(\mu_l)\to h(\mu)$ as $l\to\infty$.
Hence, for each $\nu_k$ there exists a sequence $\{\nu_{k,l}\}_{l\geq0}$ of ergodic measures
such that $\nu_{k,l}\to\nu_k$ weakly and $h(\nu_{k,l})\to h(\nu_k)$ as $l\to\infty$. 
Since $\nu_k(\omega(c))=0$, Lemma \ref{usc} gives $\chi(\nu_{k,l})\to\chi(\nu_k)$ as $l\to\infty$,
and hence $\mathscr{F}(\nu_{k,l})\to \mathscr{F}(\nu_k)$. \end{proof}

\begin{remark}
Define $\mathscr{G} \colon \mathcal M \to [-\infty, 0]$ by
$$ \mathscr{G}(\nu)
=
\begin{cases}h(\nu)-\chi(\nu) &\text{ if $\nu\in\mathcal M(f)$ and is ergodic};\\
-\infty&\text{ otherwise.}\end{cases}$$
A close inspection of the proof of the upper bound \eqref{up} in Section 3 
and that of the lower bound \eqref{low} in \cite[Proposition 4.1]{ChuRivTak} shows that, for a map $f$ as in Theorem A
the rate function $\mathscr{I}$ is also given by
$ \mathscr{I}(\mu)
=
-\inf_{\mathcal G \ni \mu}\sup_{\nu\in\mathcal G}\mathscr{G}(\nu),$
where the infimum is taken over all open subsets~$\mathcal G$ of~$\mathcal M$ containing~$\mu$.
This implies the conclusion of Lemma \ref{new}.
\end{remark}

\subsection{End of the proof of Theorem C}\label{endc}

We are in position to finish the proof of Proposition \ref{c}.

%$f$ is called renormalizable if there exists a closed interval containing $c$ in its interior and 
%an integer $p\geq2$ such that $f^m({\rm int}I)\cap f^n({\rm int}I)=\emptyset$
%for every $0\leq m<n\leq p-1$ and $f^p(I)=I$.
%If the critical point is non-recurrent, then only finitely renormalizable.

%{\bf the flatness of critical points is preserved under renormalization.}
%\begin{lemma}
%If $\mu\in\mathcal M(f)$ is not captured by the deepest renormalization cycle, then $I_f(\mu)>0$.
%\end{lemma}
%\begin{proof}
%Let $\{\mu_k\}_{k\geq0}$ be a sequence in $\mathcal M(f)$ such that
%$\mu_k\to\mu$ weakly and
%$\mathscr{F}(\mu_k)\to0$ as $k\to\infty$.
%It is not possible that $\chi(\mu_k)\to 0$.
%Hence $\displaystyle{\liminf_{k\to\infty}h(\mu_k)}>0$ holds.
%By \cite[Theorem 1.18]{DobTod}, $f$ has an acip, which 
%is absolutely continuous with respect to $\mu$. 
%Since the support of $\mu$ is a hyperbolic set, it does not contain the critical point
%and a contradiction arises.
%\end{proof}

\begin{proof}[Proof of Proposition \ref{c}]
Let $f$ be a topologically exact $S$-unimodal map with a non-recurrent flat critical point $c$.
Assume $f$ has an acip. 
Let $\mu\in\mathcal M(f)$ be such that $\mathscr{I}(\mu)=0$. Write
 $\mu=p\nu+(1-p)\nu_\bot$, $0\leq p\leq 1$, $\nu,\nu_\bot\in\mathcal M(f)$, 
$\nu(\omega(c))=1$, $\nu_\bot(\omega(c))=0$.
If $p=1$ then $\mu(\omega(c))=1$.
Assume $p\neq1$.
 Since $\mathscr{I}(\mu)=0$, by Lemma \ref{new}
 there is a sequence $\{\mu_k\}_{k\geq0}$ of ergodic measures in $\mathcal M(f)$ 
 such that $\mu_k$ converges weakly to $\mu$ and $\mathscr{F}(\mu_k)\to0$ as $k\to\infty$.
 By Lemma \ref{sentak1} and Lemma \ref{positive}, $\displaystyle{\liminf_{k\to\infty}}\chi(\mu_k)\geq(1-p)\chi(\nu_\bot)>0$,
 and hence $\displaystyle{\liminf_{k\to\infty}}h(\mu_k)\geq(1-p)\chi(\nu_\bot)>0$. 
From \cite[Theorem 1.18]{DobTod}, the acip of $f$ is absolutely continuous with respect to $\mu$.
% If $h(\mu)=0$, then the upper semi-continuity of entropy together implies $p=1$.
 %This proves (i).
 
 Assume $f$ has an acip and the topological entropy of $\omega(c)$ is zero.
Let $\mu\in\mathcal M(f)$ be such that $\mathscr{I}(\mu)=0$. Write
 $\mu=p\nu+(1-p)\nu_\bot$, $0\leq p\leq 1$, $\nu,\nu_\bot\in\mathcal M(f)$, 
$\nu(\omega(c))=1$, $\nu_\bot(\omega(c))=0$.
If $p=1$ then $\mu(\omega(c))=1$.
Assume $p\neq1$.
 Since $\mathscr{I}(\mu)=0$, by Lemma \ref{new}
 there is a sequence $\{\mu_k\}_{k\geq0}$ of ergodic measures in $\mathcal M(f)$ 
 such that $\mu_k\to\mu$ weakly and $\mathscr{F}(\mu_k)\to0$ as $k\to\infty$.
 By Lemma \ref{sentak1}, $\displaystyle{\liminf_{k\to\infty}}\chi(\mu_k)\geq(1-p)\chi(\nu_\bot)$,
Using this and the upper semi-continuity of entropy,
\begin{align*}
0&=\lim_{k\to\infty}\mathscr{F}(\mu_k)\\&\leq
\limsup_{k\to\infty}h(\mu_k)-\liminf_{k\to\infty}\chi(\mu_k)\\
&\leq h(\mu)-(1-p)\chi(\nu_\bot)\\
&=(1-p)\mathscr{F}(\nu_\bot),
\end{align*}
where the last equality is because $h(\mu)=(1-p)h(\nu_\bot)$,
from $h(\nu)=0$.
Hence $\mathscr{F}(\nu_\bot)=0$ holds. 
%If $u\neq1$ then $\mathscr{F}(\nu)\geq0$, and hence $\mathscr{F}(\nu)=0$ and $\nu$ has to be an acip.
By Theorem \ref{dob}, $\nu$ is an acip of $f$. 
 %$\mathscr{F}(\nu)=0$, namely $\nu=\mu_{\rm ac}$ holds from {\bf dobbs}. 
%Lemma \ref{ic} gives $I_f(\delta(c))=0$. We have $I_f(\mu_{\rm ac})=0$.
%Since the zero of the rate function is a convex set, $I_f(\mu)=0$. 
% Suppose the acim of $f$ is an infinite measure.
 %If $u\neq1$ then the above inequality gives $\mathscr{F}(\nu)\geq0$,
% and hence $\mathscr{F}(\nu)=0$ and $\nu$ is an acip, a contradiction.
%Hence $u=1$.
   This proves (ii).
 
 Assume $f$ has no acip.
Let $\mu\in\mathcal M(f)$ be such that $\mathscr{I}(\mu)=0$. Write
 $\mu=p\nu+(1-p)\nu_\bot$, $0\leq p\leq 1$, $\nu,\nu_\bot\in\mathcal M(f)$, 
$\nu(\omega(c))=1$, $\nu_\bot(\omega(c))=0$.
If $p=1$ then $\mu(\omega(c))=1$. Assume $p\neq 1$.
 Since $\mathscr{I}(\mu)=0$, by Lemma \ref{new}
 there is a sequence $\{\mu_k\}_{k\geq0}$ of ergodic measures in $\mathcal M(f)$ 
 such that $\mu_k$ converges weakly to $\mu$ and $\mathscr{F}(\mu_k)\to0$ as $k\to\infty$.
 By Lemma \ref{sentak1} and Lemma \ref{positive}, $\displaystyle{\liminf_{k\to\infty}}\chi(\mu_k)\geq(1-p)\chi(\nu_\bot)>0$,
 and hence $\displaystyle{\liminf_{k\to\infty}}h(\mu_k)\geq(1-p)\chi(\nu_\bot)>0$. 
From \cite[Theorem 1.18]{DobTod} there exists $\mu'\in\mathcal M(f)$
with $h(\mu')>0$ and $\mathscr{F}(\mu')=0$.
 By Theorem \ref{dob}, $\mu'$ is the acip of $f$,
   a contradiction.
 This proves (iii). The proof of Proposition \ref{c} and hence that of Theorem C is complete.
 \end{proof}

 \subsection{Convergence of the acips to the Dirac measure}\label{more}
Lastly we treat the test family $\{f_b\}_{b>0}$ given by \eqref{fb}
and prove the next proposition.
\begin{prop}\label{converge}
The acip of $f_b$, $b<1$ converges weakly to $\delta_0$ as $b\nearrow1$.
\end{prop}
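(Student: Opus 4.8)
The plan is to work with the first–return inducing scheme $(J_b,\mathcal W_b,R_b)$ of $f_b$ to the interval $J_b=(p_b^-,p_b^+)$; since $f_b(c)=1$ and $f_b(0)=0=f_b(1)$ with $p_b^-=1-p_b^+$, this is precisely the scheme of Remark \ref{chebyshev}, and $f_b^{R_b(J)}(J)=J_b$ for every $J\in\mathcal W_b$. Write $\widehat f_b$ for the induced map and $\widehat\mu_{{\rm ac},b}$ for its unique absolutely continuous invariant probability measure, whose lift $\mathcal L(\widehat\mu_{{\rm ac},b})$ equals $\mu_{{\rm ac},b}$ (Lemma \ref{liftlem} and uniqueness of the acip). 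Throughout, $b$ is close to $1$; I will use that $b\mapsto f_b$ is continuous into $C^0(X)$, so that $f_b^k\to f_1^k$ uniformly for each fixed $k$, that $p_b^\pm\to p_1^\pm$ as $b\nearrow1$, and that Mañé's hyperbolicity theorem \cite[Theorem A]{Man85} holds for $f_b$ with constants independent of $b$ near $1$ (the orbits it governs avoid a fixed neighbourhood of $c$, on which $f_b$ converges to $f_1$ in $C^3$). The latter forces the distortion of $\widehat f_b$ to be bounded with a uniform constant; since every branch of $\widehat f_b$ is onto $J_b$, the density of $\widehat\mu_{{\rm ac},b}$ is then bounded below by a positive constant $c_0$ independent of $b$.

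The first step is to show $\mu_{{\rm ac},b}(J_b)\to0$ as $b\nearrow1$. A direct computation with the formula defining $\mathcal L$ (using that $R_b$ is a first–return time) gives Kac's identity $\mu_{{\rm ac},b}(J_b)=\bigl(\int R_b\,d\widehat\mu_{{\rm ac},b}\bigr)^{-1}$, and $\int R_b\,d\widehat\mu_{{\rm ac},b}\ge c_0\int_{J_b}R_b\,dx$; so it suffices to prove $\int_{J_b}R_b\,dx\to\infty$. Write $\int_{J_b}R_b\,dx=\sum_{N\ge0}\bigl|\{x\in J_b:R_b(x)>N\}\bigr|$. For each fixed $N$, the set $\{R_b\le N\}\cap J_b=\bigcup_{k=1}^N\bigl(f_b^{-k}(J_b)\cap J_b\bigr)$ differs from $\{R_1\le N\}\cap J_1$ by a set whose Lebesgue measure tends to $0$ (finitely many preimages of intervals with endpoints $p_b^\pm\to p_1^\pm$ under maps $f_b^k\to f_1^k$ uniformly), so $|\{R_b>N\}\cap J_b|\to|\{R_1>N\}\cap J_1|$; summing over $N\le N_0$ and letting $N_0\to\infty$ yields $\liminf_{b\nearrow1}\int_{J_b}R_b\,dx\ge\int_{J_1}R_1\,dx=\infty$, the last equality because $f_1$ has no finite acip ($\int\log|Df_1|\,dx=-\infty$, the threshold case $b=1$).

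The second step is a uniform excursion estimate: for every sufficiently small $\eta>0$ there is $C_\eta$, independent of $b$ near $1$, with $\#\{0\le n<R_b(J):f_b^n(J)\not\subset[0,\eta)\}\le C_\eta$ for every $J\in\mathcal W_b$. This uses only the elementary geometry of $f_b$ on $X\setminus J_b=[0,p_b^-]\cup[p_b^+,1]$: (a) $f_b>\mathrm{id}$ on $(0,p_b^+)$ and $\rho_\eta:=\inf_b\inf_{[\eta,p_b^-]}(f_b-\mathrm{id})>0$; (b) $f_b([0,p_b^-])=f_b([p_b^+,1])=[0,p_b^+]$ and $f_b^{-1}\bigl((p_b^+,1]\bigr)=J_b$; (c) since $J$ is a diffeomorphic primitive pull-back, $c\notin f_b^n(J)$ for $0\le n<R_b(J)$, so each $f_b^n(J)$ with $1\le n<R_b(J)$ is an interval lying entirely in $[0,p_b^-]$ or in $[p_b^+,1]$. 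By (b)--(c) the intervals $f_b^n(J)$ meet $[p_b^+,1]$ for at most one $n$; and once the orbit of $J$ has entered $[0,p_b^-]$ it climbs monotonically towards $J_b$, the intervals being the successive $f_b$-preimages $(q_j,q_{j-1})$ of $q_0=p_b^-$ along the increasing branch, with $q_{j-1}-q_j\ge\rho_\eta$ whenever $q_j\ge\eta$; hence at most $\lceil p_1^-/\rho_\eta\rceil+1$ of them fail to lie in $[0,\eta)$. Adding the single term $n=0$, where $f_b^0(J)=J\subset J_b$, gives $C_\eta$.

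Finally, fix a small $\eta>0$ and put $E_\eta=X\setminus[0,\eta)$. Using the formula for $\mathcal L$ and the second step,
\begin{align*}
\mu_{{\rm ac},b}(E_\eta)
&=\frac{1}{\int R_b\,d\widehat\mu_{{\rm ac},b}}\sum_{J\in\mathcal W_b}\sum_{n=0}^{R_b(J)-1}\widehat\mu_{{\rm ac},b}\bigl(J\cap f_b^{-n}(E_\eta)\bigr)\\
&\le\frac{C_\eta}{\int R_b\,d\widehat\mu_{{\rm ac},b}}\sum_{J\in\mathcal W_b}\widehat\mu_{{\rm ac},b}(J)
=\frac{C_\eta}{\int R_b\,d\widehat\mu_{{\rm ac},b}}=C_\eta\,\mu_{{\rm ac},b}(J_b),
\end{align*}
which tends to $0$ by the first step. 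Hence $\mu_{{\rm ac},b}([0,\eta))\to1$ for every $\eta>0$, and since for continuous $\phi$ one has $|\int\phi\,d\mu_{{\rm ac},b}-\phi(0)|\le\sup_{[0,\eta)}|\phi-\phi(0)|+2\|\phi\|\,(1-\mu_{{\rm ac},b}([0,\eta)))$, letting first $b\nearrow1$ and then $\eta\to0$ gives $\mu_{{\rm ac},b}\to\delta_0$ weakly. The main obstacle is the first step --- quantifying the failure of integrability of $R_b$ as $b$ approaches the singular value $1$ --- together with the uniformity, used there, of the bounded–distortion constant of $\widehat f_b$ (hence of the lower bound $c_0$ on its invariant density) as $b\nearrow1$; the excursion estimate and the final assembly are elementary once that uniformity is secured.
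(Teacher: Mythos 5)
Your argument is correct in outline, and it takes a genuinely different route from the paper's. The paper proves Proposition~\ref{converge} by contradiction: assuming $\mu_{{\rm ac},b_k}\to p\delta_0+(1-p)\nu_\bot$ with $p<1$, it combines Lemma~\ref{sentak} (the uniform-in-$b$ Lyapunov drop bound), Lemma~\ref{positive}, Ruelle's inequality, the Dobbs--Todd upper semi-continuity of entropy along a convergent sequence of maps, and the characterization of acips in Theorem~\ref{dob}, to produce an acip for $f_1$, a contradiction. You instead work concretely with the first-return scheme to $J_b=(p_b^-,p_b^+)$ and prove the convergence \emph{directly}: Kac's formula plus $\int_{J_b}R_b\,dx\to\infty$ gives $\mu_{{\rm ac},b}(J_b)\to0$, while the explicit geometry of $f_b$ (the post-critical orbit is $c\mapsto1\mapsto0$ fixed, $f_b(J_b)\subset[p_b^+,1]$, and the excursion in $[0,p_b^-]$ climbs monotonically by at least $\rho_\eta$ while above height $\eta$) bounds by a $b$-independent $C_\eta$ the number of steps any excursion spends outside $[0,\eta)$, so $\mu_{{\rm ac},b}(X\setminus[0,\eta))\le C_\eta\,\mu_{{\rm ac},b}(J_b)\to0$. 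This sidesteps the entropy and free-energy machinery entirely and does not even need to invoke the ergodic decomposition $p\delta_0+(1-p)\nu_\bot$; the price is that you must secure uniformity (in $b$ near $1$) of the Koebe space for the first-return branches and hence of the lower bound $c_0$ on the induced invariant density, which you assert by continuity but do not spell out. For this family the uniformity is genuine — $\overline{J_b}$ stays a fixed distance from the post-critical set $\{0,1\}$ as $b\nearrow1$, the branches extend to pull-backs of a fixed slightly larger nice interval, and away from $c$ the maps converge in $C^3$ — so one can justify it, but the write-up should make this Koebe argument explicit rather than appeal loosely to Mañé's theorem ``with constants independent of $b$''. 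Similarly, the passage from ``$f_1$ has no finite acip'' to ``$\int_{J_1}R_1\,dx=\infty$'' deserves one sentence (the Benedicks--Misiurewicz $\sigma$-finite measure is the lift of the induced acip, its total mass is comparable to $\int_{J_1}R_1\,dx$, and it is infinite when $b=1$). With those two points made precise, your proof is a valid and in some respects more transparent alternative to the paper's, though it is bespoke to the family $\{f_b\}$ rather than being phrased in terms that would generalize.
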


A proof of Proposition \ref{converge} involves essentially the same set of ideas as that of the proof of Proposition \ref{c} (iii).
In particular, we exploit the fact that $f_{1}$ has no acip.
The difference from the proof of Proposition \ref{c} (iii) is that we need to treat a sequence of measures which are not invariant for a single fixed map.
We begin by proving a version of Lemma \ref{sentak1} which holds for such a sequence 
associated with a convergent sequence of maps in the family $\{f_b\}_{b>0}$.
This can be achieved with a minor modification, primarily because this family lies in the same topological conjugacy class. 
We finish the proof of Proposition \ref{converge} by combining this result with that of Dobbs and Todd \cite{DobTod}.

\begin{remark}
{\rm 
Let $\{b_k\}_{k\geq0}$ be a sequence in $[1/\sqrt{6},1)$ such that $b_k\nearrow1$ as $k\to\infty$, and
$\{\mu_{b_k}\}_{k\geq0}$ be a sequence of measures in $\mathcal M$
such that $\mu_{b_k}\in\mathcal M(f_{b_k})$ for each $k\geq0$, and $\mu_{b_k}$ converges 
weakly as $k\to\infty$ a measure $\mu\in\mathcal M$.
  Since $(x,b)\mapsto f_b(x)$ is continuous, %$\{f_b\}_{b\in(0,1]}$ is a continuous family, 
  $\mu\in\mathcal M(f_1)$ holds.}
\end{remark}

Recall that the maps $f_b$ $(b>0)$ have $c=1/2$ as their common critical point,
and the singleton $\{0\}$ as their common omega-limit set
of the critical point.
\begin{lemma}\label{sentak}
Let $\{b_k\}_{k\geq0}$ be a sequence in $[1/\sqrt{6},1)$ such that $b_k\nearrow1$ as $k\to\infty$, and
$\{\mu_{b_k}\}_{k\geq0}$ a sequence of measures in $\mathcal M$
such that 
for each $k\geq0$, $\mu_{b_k}\in\mathcal M(f_{b_k})$, $\mu_{b_k}$ is ergodic with respect to $f_{b_k}$,
 $\mu_{b_k}$ converges to $\mu\in\mathcal M(f_1)$ 
weakly as $k\to\infty$, where
$\mu=p\delta_0+(1-p)\nu_\bot$, $\nu_\bot\in\mathcal M(f_1)$,
$\nu_{\bot}(\{0\})=0$ and $0\leq p\leq 1$.
Then $$\liminf_{k\to\infty}\chi(f_{b_k};\mu_{b_k})\geq(1-p)\chi(f_1;\nu_{\bot}).$$
\end{lemma}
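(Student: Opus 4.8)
The plan is to repeat the proof of Lemma~\ref{sentak1} almost verbatim, replacing the fixed map by the sequence $\{f_{b_k}\}$ and insisting that every constant and every neighbourhood be chosen uniformly in $k$. What makes this ``a minor modification'' is that all the maps $f_b$, $b\in[1/\sqrt6,1]$ (including the limit $f_1$), share the critical point $c=1/2$, the \emph{finite} post-critical orbit $\{f_b^n(1/2)\colon n\ge1\}=\{1,0\}$ with $\omega(1/2)=\{0\}$, and one topological conjugacy class (the full tent map); moreover $(x,b)\mapsto f_b(x)$ is jointly continuous and $C^3$ in $x$ away from $1/2$, and near $1/2$ there is the uniform flat profile $\ell_b(x)=|x-1/2|^{-b}+O(|\log|x-1/2||^{-1})$.

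First I would record the uniform estimates. Since $0$ is a repelling fixed point of every $f_b$ and $1$ a repelling preimage of it (for $b=1$ use the Minimum Principle and Singer's theorem to get $|Df_1(0)|>1$, and compute $|Df_1(1)|>1$ directly), continuity of $b\mapsto f_b$ gives $\lambda_0>1$ with $|Df_b(0)|\ge\lambda_0$ and $|Df_b(1)|\ge\lambda_0$ for all $b\in[1/\sqrt6,1]$; likewise the Ma\~n\'e constants in~\eqref{mane} and the bounded-distortion constants can be taken valid for all these $b$ at once, and the combinatorics relative to a common nice interval (the set $S$ and the integers $k(n)$ from the proof of Lemma~\ref{sentak1}) are independent of $b$. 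From the explicit formula one checks that $|Df_b(x)|\to0$ as $x\to1/2$ uniformly in $b\in[1/\sqrt6,1]$, and one can fix a sequence $\alpha_m\to0$, independent of $b$, with $B_{\alpha_m}(1/2)\cap\{0\}=\emptyset$ and $|D\ell_b(x)\log|x-1/2|+\ell_b(x)/(x-1/2)|\ge m^2$ on $B_{\alpha_m}(1/2)\setminus\{1/2\}$ for all such $b$. Finally $\chi(f_1;\nu_\bot)$ is finite and positive by Lemma~\ref{positive} applied to $f_1$, and $\log|Df_1|$ is $\mu$-integrable.

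Next I would set $V_m=B_{\alpha_m}(1/2)\cup B_{1/m}(1)\cup B_{1/m}(0)$, a neighbourhood of $\{0,1/2,1\}$ with $\bigcap_m V_m=\{0,1/2,1\}$. Exactly as in Lemma~\ref{sentak1}, the uniform expansion near $0$ and $1$ together with the flat estimate on $B_{\alpha_m}(1/2)$ will give $m_0$ such that for $m\ge m_0$, every $k$, and every $x,q$ with $x,f_{b_k}(x),\dots,f_{b_k}^{q-1}(x)\in V_m$ and $f_{b_k}^q(x)\notin V_m$ one has $|Df_{b_k}^q(x)|\ge1$. I would introduce the continuous truncations $\varphi_m^{(k)}$, equal to $\max\{\log|Df_{b_k}|,-m\}$ on $B_{\alpha_m}(1/2)$ and to $\log|Df_{b_k}|$ off it, observing that $\varphi_m^{(k)}\to\varphi_m^{(1)}$ uniformly on $X$ as $k\to\infty$: on a common ball $B_{r_m}(1/2)$ all of them are identically $-m$ (since $|Df_b|<e^{-m}$ there, uniformly in $b$), while off $B_{r_m}(1/2)$ uniform convergence comes from $C^1$-convergence $f_{b_k}\to f_1$. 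Then, for each fixed $k$, I would use ergodicity of $\mu_{b_k}$ with respect to $f_{b_k}$ to pick $x_k$ generic for $\log|Df_{b_k}|$, for $\mathbf1_{V_m}\varphi_m^{(k)}$ and for all continuous functions, and split the orbit of $x_k$ at its visits to $X\setminus V_m$ as in Lemma~\ref{sentak1}, obtaining
\begin{equation*}
\chi(f_{b_k};\mu_{b_k})\ \ge\ \int\varphi_m^{(k)}\,d\mu_{b_k}\ -\ \int_{V_m\setminus B_{\alpha_m}(1/2)}\log|Df_{b_k}|\,d\mu_{b_k}.
\end{equation*}
Letting $k\to\infty$ — using weak convergence $\mu_{b_k}\to\mu$, the uniform convergence $\varphi_m^{(k)}\to\varphi_m^{(1)}$, and the uniform convergence $\log|Df_{b_k}|\to\log|Df_1|$ on the compact set $V_m\setminus B_{\alpha_m}(1/2)$, which is disjoint from $1/2$ — and then $m\to\infty$ — where dominated convergence gives $\int\varphi_m^{(1)}\,d\mu\to\chi(f_1;\mu)=p\log|Df_1(0)|+(1-p)\chi(f_1;\nu_\bot)$ and $\int_{V_m\setminus B_{\alpha_m}(1/2)}\log|Df_1|\,d\mu\to p\log|Df_1(0)|$, because $\mu(\{1\})=0$ (the sets $f_1^{-n}\{1\}$ being pairwise disjoint) — and subtracting yields $\liminf_k\chi(f_{b_k};\mu_{b_k})\ge(1-p)\chi(f_1;\nu_\bot)$.

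The hard part will be the uniformity in $b$ as $b\nearrow1$, i.e.\ up to the degenerate endpoint $f_1$, which has no acip and for which $\log|Df_b|$ ceases to be Lebesgue integrable, together with the bookkeeping in the limit $k\to\infty$, where the integrand $\varphi_m^{(k)}$ and the measure $\mu_{b_k}$ move simultaneously; the common conjugacy class, the joint continuity of $(x,b)\mapsto f_b(x)$, and the explicit flat profile $\ell_b(x)\approx|x-1/2|^{-b}$ are exactly what is needed to keep these under control.
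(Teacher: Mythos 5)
Your proposal is essentially the paper's own proof: same neighbourhoods $V_m$ around $\{0,\tfrac12,1\}$, same uniform-in-$b$ expansion claim for orbit segments crossing $V_m$, same truncations $\varphi_m$, the same orbit-splitting inequality, and the same double limit $k\to\infty$ then $m\to\infty$. The one point you glide over, and which the paper handles explicitly, is that you take $\beta_m=1/m$ for the radii around $0$ and $1$, whereas the paper takes $\beta_m\geq 1/m$ \emph{chosen so that} $\mu(\partial V_m)=0$. This matters twice: first, to pass $k\to\infty$ in the term $\int_{V_m\setminus B_{\alpha_m}(c)}\log|Df_{b_k}|\,d\mu_{b_k}$ the integrand is discontinuous at $\partial(V_m\setminus B_{\alpha_m}(c))$, so weak convergence of $\mu_{b_k}$ only gives the limit when $\mu$ gives no mass to that boundary; second, the guarantee that the generic orbit $x_k$ leaves $V_m$ infinitely often is obtained from $\mu_{b_k}(V_m)\to\mu(V_m)<1$, which again uses $\mu(\partial V_m)=0$ (and $p<1$; the case $p=1$ should be dispatched separately, trivially, via $\chi(f_{b_k};\mu_{b_k})\geq 0$). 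Both points are easily repaired by perturbing $\alpha_m,\beta_m$ off the at-most-countable set of atoms of $\mu$, but as written your choice $\beta_m=1/m$ leaves the weak-convergence step unjustified. One further cosmetic remark: for this family the exact profile is $\ell_b(x)=|x-1/2|^{-b}$ (with the right choice of $\eta$), so the $O(|\log|x-1/2||^{-1})$ correction you posit is unnecessary, and the non-Lebesgue-integrability of $\log|Df_1|$ that you flag as a difficulty is irrelevant here because all integrals are taken against invariant probability measures.
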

\begin{proof}
We start with preliminary constructions.
Fix sequences $\{\alpha_m\}_{m\geq1}$, $\{\beta_m\}_{m\geq1}$
of positive numbers 
such that the following holds for every $m\geq1$:
%$B_{\alpha_m}(c)\cap\{0\}=\emptyset$; 
$\beta_m\geq1/m$;
  $\alpha_m,\beta_m\to0$ as $m\to\infty$;
 for every $b\in [1/\sqrt{6},1]$,
  $\max_{x\in B_{\alpha_m} (c)}|Df_b(x)|<1$  and
$$\inf_{x\in B_{\alpha_m}(c)\setminus\{c\}}\left|b|x-c|^{-b-1}|\log|x-c||+\frac{|x-c|^{-b}}{|x-c|}\right|\geq m^2;$$
$\mu(\partial V_m)=0$, where $\mu$ is the measure in the statement of Lemma \ref{sentak} and
$V_m=B_{\alpha_m}(c)\bigcup B_{\beta_m}(0)\bigcup  B_{\beta_m}(1).$

%Choose a large number $m_0\geq1$ such that the following holds for every integer $m\geq m_0$:
%$|Df_1(x)|\geq 1$ for every $x\in B_{\beta_m}(0)\bigcup  B_{\beta_m}(1)$.

We claim that $\{V_m\}_{m\geq1}$ has the following property: 
there exists an integer $m_0\geq1$ such that if $b\in[1/\sqrt{6},1]$, $m\geq m_0$,
 $x\in X$ and $q\geq1$ are such that 
$x,f_b(x),\ldots,f_b^{q-1}(x)\in V_m$ and $f_b^{q}(x)\notin V_m$, then $|Df_b^{q}(x)|\geq 1.$
Indeed, if $x\in B_{\alpha_m}(c)$ then
$|f_b^q(x)-f_b^q(c)|\geq\beta_m\geq1/m$.
Since the map $f_b$ satisfies $\ell(x)=|x-c|^{-b}$ and $|D\ell(x)|=b|x-c|^{-b-1}$ we obtain
\begin{align*}
|Df_b^{q}(x)|\sim |f_b^q(x)-f_b^q(c)|\cdot\left|b|x-c|^{-b-1}|\log|x-c||+\frac{|x-c|^{-b}}{|x-c|}\right|.
\end{align*}
This number is comparable to $m$, and therefore
$|Df_b^q(x)|\geq1$ provided $m$ is sufficiently large.
Since $|Df_b(0)|>1$ and $|Df_b(1)|>1$, if $x\in B_{\beta_m}(0)\cup B_{\beta_m}(1)$ then 
$|Df_b(x)|\geq1$ holds provided $m$ is sufficiently large. 
Hence the claim holds.

Let $\{b_k\}_{k\geq0}$ and
$\{\mu_{b_k}\}_{k\geq0}$ be the sequences in the statement of Lemma \ref{sentak}.
If $\mu_{b_k}=\delta_0$ holds for infinitely many $k$,
 then $p=1$ and the inequality holds.
 In what follows we assume $\mu_{b_k}\neq\delta_0$ for every $k\geq0$.
  For each $m\geq m_0$ and $b\in[ 1/\sqrt{6},1]$
define a continuous function $\varphi_{m,b}\colon X\to\mathbb R$ by
$$\varphi_{m,b}(x)=\begin{cases}\max\{\log|Df_b(x)|,-m\}&\text{ if $x\in B_{\alpha_m}(c)$};\\
\log|Df_b(x)|&\text{ otherwise.}\end{cases}$$
  Let $1_m$ denote the indicator function of $V_m$.
For each $\mu_{b_k}$ take a point $x_{b_k}\in X$ such that the following holds:
$$\lim_{n\to\infty}\frac{1}{n}\sum_{i=0}^{n-1}\log|Df_{b_k}(f_{b_k}^i(x_{b_k}))|= \int\log|Df_{b_k}|d\mu_{b_k};$$
$$\lim_{n\to\infty}\frac{1}{n}\sum_{i=0}^{n-1}1_m(f_{b_k}^i(x_{b_k})\varphi_{m,b_k}(f_{b_k}^i(x_{b_k}))= \int1_m\varphi_{m,b_k}d\mu_{b_k};$$
$$\lim_{n\to\infty}\frac{1}{n}\sum_{i=0}^{n-1}\phi(f_{b_k}^i(x_{b_k}))= \int\phi d\mu_{b_k}
\enspace\text{for every continuous $\phi\colon X\to\mathbb R$};$$
$$\lim_{n\to\infty}\frac{1}{n}\#\{0\leq i\leq n-1\colon f_{b_k}^i(x_{b_k})\in V_m\}=\mu_{b_k}(V_m).$$
Since $\mu(\partial V_m)=0$ and $\mu_{b_k}\to\mu$ as $k\to\infty$,
$\mu_{b_k}(V_m)\to\mu(V_m)<1$.
Hence, for every sufficiently large $m$, $f_{b_k}^n(x_{b_k})\notin V_m$ holds for infinitely many $n$.
Let $\{n_l\}_{l\geq1}$ denote the subsequence obtained by aligning the elements of the set
$\{n\geq0\colon f_{b_k}^n(x_{b_k})\notin V_m\}$ in the increasing order.
Similarly to the proof of Lemma \ref{sentak1}, the property of $\{V_m\}_{m\geq1}$ implies
\begin{align*}\sum_{n=0}^{n_l-1}\log|Df_{b_k}(f_{b_k}^n(x_{b_k}))|&\geq
\sum_{n=0}^{n_l-1}\varphi_{m,b_k}(f_{b_k}^n(x_{b_k}))
-\sum_{n=0}^{n_l-1}1_m(f_{b_k}^n(x_{b_k}))\varphi_{m,b_k}(f_{b_k}^n(x_{b_k})).
\end{align*}
On the second summand of the last line, 
\begin{align*}\lim_{l\to\infty}\frac{1}{n_l}\sum_{n=0}^{n_l-1}1_m(f_{b_k}^n(x_{b_k}))\varphi_{m,b_k}(f_{b_k}^n(x_{b_k}))
\leq\int_{V_m\setminus B_{\alpha_m}(c)}\log|Df_{b_k}|d\mu_{b_k}.
\end{align*}
Hence
\begin{equation}\label{eqlast}
\begin{split}\int\log|Df_{b_k}|d\mu_{b_k}
&=\lim_{l\to\infty}\frac{1}{n_l}\sum_{n=0}^{n_l-1}\log|Df_{b_k}(f_{b_k}^n(x_{b_k}))|\\
&\geq \int\varphi_{m,b_k}d\mu_{b_k}-\int_{V_m\setminus B_{\alpha_m}(c)}\log|Df_{b_k}|d\mu_{b_k}.
\end{split}\end{equation}

We claim that the two integrals in the right-hand side of the inequality in \eqref{eqlast} converge as $k\to\infty$.
Indeed, for each fixed $m$, $\varphi_{m,b_k}$ converges to $\varphi_{m,1}$ uniformly as $k\to\infty$.
For every $\varepsilon>0$ there exists $k_0\geq0$ such that if $k\geq k_0$ then
$\|\varphi_{m,b_k}-\varphi_{m,1}\|\leq\varepsilon/2$.
Since $\mu_{b_k}\to\mu$ weakly as $k\to\infty$ and $\varphi_{m,1}$ is continuous,
there exists $k_1\geq0$ such that if $k\geq k_1$ then
$\left|\int\varphi_{m,1}d\mu_{b_k}-\int\varphi_{m,1}d\mu\right|\leq\varepsilon/2.$
If $k\geq\max\{k_0,k_1\}$ then
\begin{align*}\left|\int\varphi_{m,b_k}d\mu_{b_k}-\int\varphi_{m,1}d\mu\right|\leq&\left|\int\varphi_{m,b_k}d\mu_{b_k}-\int\varphi_{m,1}d\mu_{b_k}\right|\\
&+
\left|\int\varphi_{m,1}d\mu_{b_k}-\int\varphi_{m,1}d\mu\right|\\
\leq&\varepsilon,\end{align*}
namely $\int\varphi_{m,b_k}d\mu_{b_k}\to\int\varphi_{m,1}d\mu$ as $k\to\infty$.
In the same way, for each fixed $m$ we have
$\int_{V_m\setminus B_{\alpha_m}(c)}\log|Df_{b_k}|d\mu_{b_k}\to  \int_{V_m\setminus B_{\alpha_m}(c)}\log|Df_{1}|d\mu$
as $k\to\infty$ and the claim holds.

Letting $k\to\infty$ in \eqref{eqlast} yields
\begin{align*}
\liminf_{k\to\infty}\int\log|Df_{b_k}|d\mu_{b_k}\geq &\int\varphi_{m,1}d\mu-   p\int_{V_m\setminus B_{\alpha_m}(c)}
\log|Df_1|d\nu\\
&-(1-p)\int_{V_m\setminus 
B_{\alpha_m}(c)}\log|Df_1|d\nu_\bot.\end{align*}
From the Dominated Convergence Theorem, $\int\varphi_md\mu\to\int\log|Df_1|d\mu$
as $m\to\infty$. Since $0\in V_m\setminus B_{\alpha_m}(c)$,
the second integral is equal to $\int\log|Df_1|d\nu$.
Since $\log|Df_1|$ is bounded on $V_m\setminus B_{\alpha_m}(c)$
and $\nu_\bot(V_m\setminus B_{\alpha_m}(c))\to0$ as $m\to\infty$,
the third integral goes to $0$ as $m\to\infty$.
This finishes the proof of Lemma \ref{sentak}.
\end{proof}

\begin{proof}[Proof of Proposition \ref{converge}]
For each $b\in[1/\sqrt{6},1)$ let $\mu_{{\rm ac},b}$ denote the acip of $f_b$.
Let $\{b_k\}_{k\geq0}$ be an arbitrary sequence in $[1/\sqrt{6},1)$ such that $b_k\nearrow1$ as $k\to\infty$ and
$\{\mu_{{\rm ac},b_k}\}_{k\geq0}$ converges weakly to a measure $\mu\in\mathcal M(f_1)$.
Write
 $\mu=p\delta_0+(1-p)\nu_\bot$, $0\leq p\leq 1$, $\nu_\bot\in\mathcal M(f_1)$, 
$\nu_\bot(\{0\})=0$.
If $p=1$ then $\mu=\delta_0$. Assume $p\neq 1$. 
 From Lemma \ref{sentak} and Lemma \ref{positive}, $\displaystyle{\liminf_{k\to\infty}}\chi(\mu_{{\rm ac},b_k})\geq(1-p)\chi(\nu_\bot)$
 holds.
 The characterization of the acip gives
 $\mathscr{F}(f_{b_k};\mu_{{\rm ac},b_k})=0$ for every $k\geq0$, and thus
  $\displaystyle{\liminf_{k\to\infty}}h(\mu_{{\rm ac},b_k})\geq(1-p)\chi(\nu_\bot)$. 
We have
 \begin{align*}
 (1-p)\chi(\nu_\bot ) &\ge  (1-p)h(\nu_\bot ) \\
 &= h(\mu) \\
 &\ge \liminf_{k\to\infty} h(\mu_{{\rm ac}, b_k}) \\
&\ge (1-p)\chi (\nu_\bot).\end{align*}
The first inequality is from Ruelle's inequality, and the second one 
from the upper semi-continuity of entropy for a sequence of maps 
\cite[Theorem 1.15]{DobTod}. It follows that all the inequalities are equalities
and $h(\nu_\bot)=\chi(\nu_\bot)$.
 By Theorem \ref{dob} the measure $\nu_\bot$ is an acip,
   a contradiction.
    \end{proof}

%    \begin{lemma}\label{accumulation}
%If $b\geq1$, then for Lebesgue a.e. $x\in X$
%the measure $\delta_{x,b}^n$ converges weakly to $\delta_0$ as $n\to\infty$. \end{lemma}
%\begin{proof}Choose a continuous, $\mu_{\rm ac}$-integrable test function $\varphi_0$ such that $\varphi_0>0$
%on $X\setminus\partial X$. 
%Such a function exists because the density of $\mu_{\rm ac}$ is bounded at each point 
%on $X\setminus\partial X$.
%Since $\mu_{\rm ac}$ is an infinite measure, 
%$(1/n)S_n\varphi_0(x)\to0$ for $\mu_{\rm ac}$-a.e.
% $x\in X$ (see e.g., Aaronson \cite{Aar97}) and hence this convergence holds for Lebesgue a.e. $x\in X$.
 % If there exists a subsequence $\{n_k\}_k$ such that $\delta_x^{n_k}\to\mu$ as $k\to\infty$ and $\mu\neq\delta_0$,
 %then $(1/n_k)\sum_{i=0}^{n_k-1}\varphi_0(f^i(x_0))\to\int \varphi_0d\mu>0$,
 %and a contradiction arises.
 %\end{proof}

 \section*{Appendix A. Rate functions for Collet-Eckmann maps}\label{consequence}
In this appendix we characterize the zero of the rate function for a Collet-Eckmann map.

\begin{thmA1}\label{A1}
Let $f$ be a topologically exact $S$-unimodal map with non-flat critical point satisfying 
the Collet-Eckmann condition.
Then $\mathscr{I}(\mu)=0$ if and only if $\mu=\mu_{\rm ac}.$
\end{thmA1}

\begin{proof}
Let $\mu\in\mathcal M(f)\setminus\{\mu_{\rm ac}\}$. 
Take a Lipschitz continuous function $\phi\colon X\to\mathbb R$ such that  
 $\int \phi d\mu \not= \int \phi d\mu_{\rm ac}$. 
 Then $\phi\neq\psi\circ f-\psi$ holds for every $\psi\in L^2(\mu_{\rm ac})$,
 and thus $\sigma_\phi^2>0$, see Liverani \cite{Liv96}.
  Put  $\epsilon_0 = |\int \phi d\mu - \int \phi d\mu_{\rm ac}| /2$. 
  The set $\{\nu\in\mathcal M\colon|\int\phi d\nu-  \int\phi d\mu_{\rm ac}|>\epsilon_0\}$
  is an open subset of $\mathcal M$ which contains $\mu$.
  The lower bound in the LDP \eqref{low} gives
\begin{align*}-\mathscr{I}(\mu )& \le \liminf_{n\to\infty}\frac{1}{n}\log \left|\left\{ x\in X\colon \left |\frac{1}{n}S_n\phi(x) - \int \phi d\mu_{\rm ac}\right| > \epsilon_0   \right\}\right|.
\end{align*} 
From Theorem \ref{kelnow} the right-hand-side is strictly negative,
 and therefore $\mathscr{I}(\mu)>0$.
\end{proof}

\section*{Appendix B: LDP for intermittent maps}
In this appendix we treat the Manneville-Pomeau map $f_\alpha\colon X\to X$ 
given by $f_\alpha(x)=x+x^{1+\alpha}$ (mod $1$)
 where $f_\alpha(0)=0$, the value of $f_\alpha$ at its discontinuity is $0$, $f_\alpha(1)=1$ and $\alpha>0$.
%There exists a unique $f_\alpha$-invariant measure that is absolutely continuous with respect
%to the Lebesgue measure. I is a finite measure if and only if $s<1$.
% The LDP holds irrespective of the acim is finite or not.
The map $f_\alpha$ has an acip if and only if $\alpha<1$.
The acip is unique and is denoted by $\mu_{\rm ac,\alpha}$.
Let $\mathcal M(f_\alpha)$ denote the set of $f_\alpha$-invariant Borel probability measures.
For each $\mu\in\mathcal M(f_\alpha)$
 the Kolmogorov-Sina{\u\i} entropy of $(f_\alpha,\mu)$ is denoted by $h(f_\alpha,\mu)$
and $\chi(f_\alpha;\mu)=\int\log |Df_\alpha| d\mu$.
We do not mind any clash of notation with the previous sections.

\begin{thmB1}\label{thmB1}
 Let $f_\alpha$ be the Manneville-Pomeau map.
Then the Large Deviation Principle holds.
The rate function $\mathscr{I}=\mathscr{I}(f_\alpha;\, \cdot)\colon \mathcal M\to[0,\infty]$ is given by 
$$ \mathscr{I}(f_\alpha;\mu)
=
\begin{cases}\chi(\mu)-h(\mu) &\text{ if $\mu\in\mathcal M(f_\alpha)$};\\
\infty&\text{ otherwise.}\end{cases}$$
%$$\{\mu\in\mathcal M\colon \mathscr{I}(f_\alpha;\mu)=0\}=
%\begin{cases}
%\{p\delta_0+(1-p)\mu_{\rm ac,\alpha}\colon 0\leq p\leq1\}& \text{ if $0<\alpha<1$};\\
%\{\delta_0\}& \text{ if $\alpha\geq1$.}
%\end{cases}$$
In addition, $\mu_{{\rm ac},\alpha}$ converges weakly to $\delta_0$ as $\alpha\nearrow1$.
\end{thmB1}

It follows that
there is a qualitative change in the structure of the set of zeros of 
$\mathscr{I}(f_\alpha;\mu)$ at $\alpha=1$:
for $0<\alpha<1$, $\mathscr{I}(f_\alpha;\mu)=0$ if and only if 
there exists $p\in[0,1]$ such that $\mu=p\delta_0+(1-p)\mu_{\rm ac,\alpha}$;
for $\alpha\geq1$, $\mathscr{I}(f_\alpha;\mu)=0$ if and only if $\mu=\delta_0$.
\medskip

\noindent{\it Remark B.2.}
For simplicity we have suppressed small generalizations 
to other interval maps with neutral fixed point. The statements as in Theorem B.1
hold, for example, for maps treated in Nakaishi \cite{Nak00}, Pollicott and Sharp \cite{PolSha09}.

\begin{proof}[Proof of Theorem B.1]
A proof of the lower bound \eqref{low} is almost identical to 
those of \cite[Section 7]{Chu11} and \cite[Proposition 4.1]{ChuRivTak}
and hence we omit it. The existence of the discontinuity does not matter.
We only give a proof of the upper bound \eqref{up}.

Let $I$ denote the domain of the branch of $f$ not containing $0$.
 The first return time to $I$ is a function $R\colon I\to \mathbb Z_{>0}\cup\{\infty\}$ 
defined by
$$R(x)=\inf\left(\{n\geq1\colon f_\alpha^n(x)\in I\}\cup\{\infty\}\right).$$
We show that the inducing scheme obtained from the first return map to $I$
given by 
$x\in I\mapsto f_\alpha^{R(x)}(x)$ satisfies the following specification-like property 
which is a counterpart of Proposition \ref{nse}.

\begin{lemB2}\label{lemB2}
There exists a constant $C>0$ such that
$$\frac{|\{R=n+1\}|}{|\{R>n\}|}\geq Cn^{-2(1+\alpha)/\alpha}\text{ for every $n\geq1$}.$$
\end{lemB2}
\begin{proof}
The Mean Value Theorem gives 
$ |Df_\alpha^{n+1}(x)|\cdot|\{R=n+1\}|=|f_\alpha^{n+1}(\{R=n+1\})|$ for some $x\in \{R=n+1\}$.
By \cite[Lemma 2.1]{Nak00}, there exists a constant $C>0$ such that
$|\{R=n+1\}|\geq Cn^{-2(1+\alpha)/\alpha}.$
This and $|\{R>n\}|\leq|X|=1$ together yield the desired inequality.
\end{proof}
Using Lemma B.2 in the place of Proposition \ref{nse}
and repeating the argument in Sect.3 show the upper bound \eqref{up}.
Since  $\mu\in\mathcal M(f_\alpha)\mapsto h(\mu)$ is upper semi-continuous and
 $\mu\in\mathcal M(f_\alpha)\mapsto\int\log|Df_\alpha|d\mu$ is continuous,
the rate function has the desired form.
The characterization of the zeros of the rate function follows from the result of Ledrappier \cite{Led81}.

\begin{lemB3}\label{lemB3}
 Let $\{\alpha_k\}_{k\geq0}$ be a sequence in $(0,1)$ such that $\alpha_k\nearrow1$ as $k\to\infty$, and
$\{\mu_{\alpha_k}\}_{k\geq0}$ a sequence of measures in $\mathcal M$
such that 
for each $k\geq0$, $\mu_{\alpha_k}\in\mathcal M(f_{\alpha_k})$, $\mu_{\alpha_k}$ is ergodic with respect to
 $f_{\alpha_k}$,
 $\mu_{\alpha_k}$ converges to $\mu\in\mathcal M(f_1)$ 
weakly as $k\to\infty$, where
$\mu=p\delta_0+(1-p)\nu_\bot$, $\nu_\bot\in\mathcal M(f_1)$,
$\nu_{\bot}(\{0\})=0$ and $0\leq p\leq 1$.
Then $$\lim_{k\to\infty}\chi(f_{\alpha_k};\mu_{\alpha_k})=(1-p)\chi(f_1;\nu_{\bot}).$$
\end{lemB3}
\begin{proof}
Since $\log|Df_{\alpha_k}|$ converges to $\log|Df_{1}|$ uniformly as $k\to\infty$,
for every $\varepsilon>0$ there exists $k_0\geq0$ such that if $k\geq k_0$ then
$\|\log|Df_{\alpha_k}|-\log|Df_{1}|\|\leq\varepsilon/2$.
Since $\mu_{\alpha_k}\to\mu$ weakly as $k\to\infty$ and $\log|Df_1|$ is continuous,
there exists $k_1\geq0$ such that if $k\geq k_1$ then
$\left|\int\log|Df_{1}|d\mu_{\alpha_k}-\int\log|Df_{1}|d\mu\right|\leq\varepsilon/2.$
If $k\geq\max\{k_0,k_1\}$ then

\begin{align*}\left|\int\log|Df_{\alpha_k}|d\mu_{\alpha_k}-\int\log|Df_{1}|d\mu\right|\leq&\left|\int\log|Df_{\alpha_k}|d\mu_{\alpha_k}-\int
\log|Df_1|d\mu_{\alpha_k}\right|\\
&+
\left|\int\log|Df_{1}|d\mu_{\alpha_k}-\int\log|Df_{1}|d\mu\right|\\
\leq&\varepsilon,\end{align*}
namely $\int\log|Df_{\alpha_k}|d\mu_{\alpha_k}\to\int\log|Df_1|d\mu=(1-p)\chi(f_1;\nu_{\bot})$ as $k\to\infty$.
\end{proof}

 Using Lemma B.3 in the place of Lemma \ref{sentak}
 and repeating the argument in the proof of Proposition \ref{converge} 
show that $\mu_{\rm ac,\alpha}$ converges weakly to $\delta_0$ as $\alpha\nearrow1$.
This finishes the proof of Theorem B.1.
\end{proof}

\bibliographystyle{amsplain}
%    Insert the bibliography data here.

\end{document}